\newtheorem{theorem}{Theorem}[section]
\newtheorem{claim}[theorem]{Claim}
\newtheorem{lemma}[theorem]{Lemma}
\newtheorem{proposition}[theorem]{Proposition}
\newtheorem{corollary}[theorem]{Corollary}
\newtheorem{assumption}[theorem]{Assumption}
\theoremstyle{definition}
\newtheorem{definition}[theorem]{Definition}
\newtheorem{example}[theorem]{Example}
\theoremstyle{remark}
\newtheorem{remark}[theorem]{Remark}
\DeclareMathOperator{\dom}{dom}
\DeclareMathOperator{\Succ}{Succ}
\DeclareMathOperator{\rng}{rng}
\DeclareMathOperator{\mc}{mc}
\DeclareMathOperator{\OB}{OB}
\DeclareMathOperator{\tp}{top}
\DeclareMathOperator{\Lev}{Lev}
\DeclareMathOperator{\crit}{crit}
\DeclareMathOperator{\Add}{Add}
\DeclareMathOperator{\stem}{stem}
\newcommand{\cf}{{\rm cf}}
\def\Ult{{\rm Ult}}
\title{Extender-based Magidor-Radin forcings without top extenders}
\author{Moti Gitik}
\address{School of Mathematical Sciences, Tel Aviv University, Tel Aviv-Yafo,  Tel Aviv, Israel, 6997801}
\email{gitik@tauex.tau.ac.il}
\author{Sittinon Jirattikansakul}
\address{School of Mathematical Sciences, Tel Aviv University, Tel Aviv-Yafo,  Tel Aviv, Israel, 6997801}
\email{jir.sittinon@gmail.com}
\thanks{The authors were partially supported by ISF grant No. 882/22.}
\thanks{We are grateful to Carmi Merimovich for many helpful discussions on the subject. }
\begin{document}
\maketitle

\begin{abstract}
    Continuing \cite{GitJir22}, we develop a version of Extender-based Magidor-Radin forcing where there are no extenders on the top ordinal. As an application, we provide another approach to obtain a failure of SCH on a club subset of an inaccessible cardinal, and a model where the cardinal arithmetic behaviors are different on stationary classes, whose union is the club, is provided. The cardinals and the cofinalities outside the clubs are not affected by the forcings. 
\end{abstract}

\section{Introduction}

The present work continues \cite{GitJir22} and develops Extender-based Magidor-Radin forcings without top extenders.
The main new issue here is to deal with Cohen parts of Extenders Based forcings.
New ideas involving a substantial use of names will be applied for  this.

As an application, we give new proofs of results of \cite{GITIKmerimovich2006}, where the power set function behaves differently on stationary classes.
An advantage of the present approach is that fewer cardinals and cofinalities are affected by the forcing.

The organization of the paper is the following.
In Section \ref{bas} we introduce all basic ingredients we need to develop the forcing.
From Section \ref{firfew} to Section \ref{mainforcing}, we develop the forcing in which a club class of cardinals $\alpha$ with $2^\alpha=\alpha^{++}$.
The forcing for building a club class of cardinals is built from approximated forcings, which will be built by recursion. The basic cases are constructed in Section \ref{firfew}. In Section \ref{theindsch} we state all the properties we need to be true, and show that the forcings in the basic cases satisfy the properties. Then the construction proceeds in Section \ref{belthefir}, Section \ref{atfir}, and Section \ref{generallevel}. The main forcing will then be introduced in \ref{mainforcing}. Lastly, in Section \ref{getdifcar}, we sketch a generalization of the forcing to get different cardinal behaviors on different stationary classes. 

Although the version of Extender-based forcing and the Extender-based Magidor-Radin forcing we will be using looks different from \cite{Mer11}, A familiarity of the Extender-Based Magidor-Radin forcings will accommodate the readers.

Conventions: Without mentioning, we assume that every forcing has the weakest element $1$. $p \leq q$ means $p$ is stronger than $q$. When possible, every name in this paper will be in the canonical form. Most of the time, we omit the check symbol when we discuss the check names. For sets $A$ and $B$, $A \sqcup B$ just means $A \cup B$ where $A \cap B=\emptyset$. If $f$ is a function and $d$ is a set, define $f \restriction d$ as $f \restriction [d\cap \dom(f)]$. If $f$ and $g$ are functions, $f \circ g$ is a function whose domain is $\{x \in \dom(g) \mid g(x) \in \dom(f)\}$ and $f \circ g(x)=f(g(x))$. Throughout the paper, the forcing at level $\rho$, denoted $P_\rho$, will be defined. We often abbreviate $\leq_{P_\rho}$ by $\leq_\rho$ and $\Vdash_{P_\rho}$ by $\Vdash_\rho$. If $\vec{x}=\langle x_{\alpha,\beta} \rangle$ is a sequence indexed by pairs of ordinals, we define $$\vec{x} \restriction (\alpha,\beta)=\langle x_{\alpha^\prime,\beta^\prime} \mid \alpha^\prime<\alpha \text{ or }(\alpha^\prime=\alpha \text{ and }\beta^\prime<\beta) \rangle,$$ and
$$\vec{x} \restriction \alpha= \vec{x} \restriction (\alpha,0).$$

\section{Basic preparation}
\label{bas}
From now until Section \ref{mainforcing}, we have the following hypotheses.

\begin{assumption}
\label{iniass}
GCH holds. $\kappa$ is a strongly inaccessible cardinal. There is a function $\circ: \kappa \to \kappa$ and $\vec{E}=\langle E(\alpha,\beta) \mid \alpha<\kappa, \beta<\circ(\alpha) \rangle$ such that

\begin{enumerate}

    \item $E(\alpha,\beta)$ is an $(\alpha,\alpha^{++})$-extender, which means that if $$j_{\alpha,\beta}: V \to \Ult(V,E(\alpha,\beta))=:M_{\alpha,\beta}$$ is the ultrapower map, then $\crit(j_{\alpha,\beta})=\alpha$, and $M_{\alpha,\beta}$ computes cardinals correctly up to an including $\alpha^{++}$.

    \item $\vec{E}$ is {\em coherent}, namely $$j_{\alpha,\beta}(\vec{E}) \restriction (\alpha+1)=\vec{E} \restriction (\alpha,\beta).$$

    \item for all $\alpha$, $\circ(\alpha)<\alpha$.

    \item For every $\gamma<\kappa$, the collection 
    $$\{\alpha<\kappa \mid \circ(\alpha) \geq \gamma\}$$
is stationary.    
\end{enumerate}

\end{assumption}

\begin{definition}
Let $\alpha<\kappa$. We say that $d$ is a {\em $\alpha$-domain} if $d \in [\alpha^{++} \setminus \alpha]^{\leq \alpha}$ and $\alpha \in d$.
Define $C(\alpha^+,\alpha^{++})$ as the collection of functions $f$ such that $\dom(f)$ is a $\alpha$-domain $d$, and $\rng(f) \subseteq \alpha$. Define the ordering in $C(\alpha^+,\alpha^{++})$ by $f \leq g$ iff $f \supseteq g$.
\end{definition}

Note that $C(\alpha^+,\alpha^{++})$ is isomorphic to $\Add(\alpha^+,\alpha^{++})$, the forcing which adds $\alpha^{++}$ Cohen subsets of $\alpha^+$.

\begin{remark}
\label{densegrounddomain}
If $|P|\leq \alpha$ and $\dot{C}(\alpha^+,\alpha^{++})$ is a $P$-name of the forcing interpreted in the extension, then $$\Vdash_P``\{\dot{f} \in \dot{C}(\alpha^+,\alpha^{++}) \mid \dom(\dot{f})=\check{d}, d \in V\} \text{ is dense}".$$
We identify such and $\dot{f}$ by $f$ with $\dom(f)=d$, and for $\alpha \in \dom(f)$, $f(\alpha)$ is a $P$-name of an ordinal below $\alpha$.
\end{remark}

Until the end of this section, fix $\alpha$ with $\circ(\alpha)>0$ and $\beta<\circ(\alpha)$. We introduce some definitions and facts which will be used since Section \ref{generallevel}. Fix an $\alpha$-domain $d$.

\begin{itemize}
    \item Define $\mc_{\alpha,\beta}(d)=\{(j_{\alpha,\beta}(\xi),\xi) \mid \xi \in d\}$. 
    \item Define $E_{\alpha,\beta}(d)$ by $X \in E_{\alpha,\beta}(d)$ iff $\mc_{\alpha,\beta}(d) \in j_{\alpha,\beta}(X)$. Then $E_{\alpha,\beta}(d)$ concentrates on the collection {\em $\OB_{\alpha,\beta}(d)$} of {\em $(\alpha,\beta)$-$d$-objects}, which are functions $\mu$ such that
    \begin{itemize}
        \item $\alpha \in \dom(\mu) \subseteq d, \rng(\mu) \subseteq \alpha$ (in fact, we can assume that $\rng(\mu) \subseteq \mu(\alpha)^{++}$).
        
        (The reason is that $\dom(\mc_{\alpha,\beta}(d))=j_{\alpha,\beta}[d]\subseteq j_{\alpha,\beta}(d)$, $j_{\alpha,\beta}(\alpha) \in j_{\alpha,\beta}[d]$, $\rng(\mc_{\alpha,\beta}(d))=d \subseteq \alpha^{++}=\mc_{\alpha,\beta}(d)(j_{\alpha,\beta}(\alpha))^{++})$.
        \item $\circ(\mu(\alpha))=\beta$, in particular, $\mu(\alpha)$ is strongly inaccessible,  $|\dom(\mu)| \leq \mu(\alpha)^{++}$, and $\mu$ is order-preserving.

        (The reason is that $j_{\alpha,\beta}(\circ)(\alpha)^{M_{\alpha,\beta}}=\beta$, $\alpha$ is inaccessible, $|\dom(\mc_{\alpha,\beta}(d))|=|d|\leq \alpha^{++}$, and $\mc_{\alpha,\beta}$ is order-preserving.)
    \end{itemize}
    \item Let $X_\nu \in E_{\alpha,\beta}(d)$ for $\nu<\alpha$. Define the {\em diagonal intersection} $$\Delta_{\nu<\alpha}X_\nu=\{\mu \in \OB_{\alpha,\beta}(d) \mid \forall \nu<\mu(\alpha)(\mu \in X_\nu)\}.$$
    Then $\Delta_{\nu<\alpha}X_\nu \in E_{\alpha,\beta}(d)$.

    \item The measure $E_{\alpha,\beta}(\{\alpha\})$ is normal, and is isomorphic to  $E_{\alpha,\beta}(\alpha)$, which is defined by $X \in E_{\alpha,\beta}(\alpha)$ iff $\alpha \in j_{\alpha,\beta}(X)$.
    
    \item if $d^\prime \supseteq d$ is an $\alpha$-domain, there is an {\em associated  projection} from $E_{\alpha,\beta}(d^\prime)$ to $E_{\alpha,\beta}(d)$ induced by the map $\pi_{d^\prime,d}:\OB_{\alpha,\beta}(d^\prime) \to \OB_{\alpha,\beta}(d)$ defined by $\pi_{d^\prime,d}(\mu)=\mu \restriction d$ (i.e. $\mu \restriction (d \cap \dom(\mu)$). In particular, there is a projection from $E_{\alpha,\beta}(d)$ to $E_{\alpha,\beta}(\{\alpha\})$.

    \item Similar as in the proof of Lemma 2 
 \cite{Jir22}, there is a measure-one set $\mathcal{B}_d \in E_{\alpha,\beta}(d)$ such that for every $\nu<\alpha$, $\{\mu \in \OB_{\alpha,\beta}(d) \mid \mu(\alpha)=\nu\} \leq \nu^{++}$. We will assume that for every $A \in E_{\alpha,\beta}(d)$, $A \subseteq \mathcal{B}_d$.

\end{itemize}

We now no longer fix $\beta$, but still fix $\alpha$ and $d$.

\begin{itemize}

    \item $\mu$ is an {\em $\alpha$-$d$-object} if $\mu$ is an $(\alpha,\beta)$-$d$-object for some $\beta<\circ(\alpha)$.
    Denote the collection of $\alpha$-$d$-object by $\OB_\alpha(d)$. For each pair of $\alpha$-$d$-objects $\mu$ and $\tau$, define $\mu<\tau$ if $\dom(\mu) \subseteq \dom(\tau)$ and  for $\gamma \in \dom(\mu)$, $\mu(\gamma)<\tau(\gamma)$.

    \item Define $X \in \vec{E}_\alpha(d)$ iff $X$ can be written as $X=\cup_{\beta<\circ(\alpha)} X_\beta$ where $X_\beta \in E_{\alpha,\beta}(d)$.
    Note that for each $\alpha$-$d$-object $\mu$, $\{\tau \in \OB_\alpha(d) \mid \mu<\tau\} \in \vec{E}_\alpha(d)$.

    \item Note that for each $\alpha$-$d$-object $\tau$, $\{\mu \mid \tau<\mu\} \in \vec{E}_\alpha(d)$. 
    \item For each $X \in \vec{E}_\alpha(d)$, $X$ can be written as a disjoin union of $X_\beta$, $\beta<\alpha$, where $X_\beta \in E_{\alpha,\beta}(d)$ and for each $\mu \in X_\beta$, $\circ(\mu(\alpha))=\beta$.
    \item Let $X_\nu \in \vec{E}_\alpha(d)$ for $\nu<\alpha$. The {\em diagonal intersection}
    $$\Delta_{\nu<\alpha}X_\nu=\{\mu \in \OB_\alpha(d) \mid \forall \nu<\mu(\alpha)(\mu \in X_\nu)\}$$
    is in $\vec{E}_\alpha(d)$.

    \item If $\mu<\tau$, we define $\mu \downarrow \tau=\mu \circ \tau^{-1}$, which is the function whose domain is $\tau[\dom(\mu)]$ and for $\gamma \in \dom(\mu)$, $(\mu \downarrow \tau)(\tau(\gamma))=\mu(\gamma)$. Since $\tau$ is order-preserving, we have that $\mu \downarrow \tau$ is well-defined. 
    
    \item If $X$ is a set of $\alpha$-$d$-object and $\tau \in \OB_\alpha(d)$, define $X \downarrow \tau=\{\mu \downarrow \tau \mid \mu<\tau, \circ(\mu(\alpha))<\circ(\tau(\alpha))\}$. By the coherence of the extenders, we also assume that every $X \in \vec{E}_\alpha(d)$ is {\em coherent}, i.e. for every $\tau \in X$, $X \downarrow \tau \in \vec{E}_{\tau(\alpha)}(\tau[d \cap \dom(\tau)])$. 
    
    \item Let $\vec{\mu}=\langle \mu_0 ,\cdots, \mu_{n-1} \rangle$ be an increasing sequence of $\alpha$-$d$-objects, define $\vec{\mu}(\alpha)=\mu_{n-1}(\alpha)$, which is just an inaccessible cardinal below $\alpha$. Also write $\dom(\vec{\mu})=\dom(\mu_{n-1})$. Also, if $\mu_{n-1}<\tau$, we define $\vec{\mu} \downarrow \tau=\langle \mu_0 \downarrow \tau, \cdots, \mu_{n-1} \downarrow \tau \rangle$.

    \item $A$ is an $\alpha$-$d$-tree if $A$ consists of nonempty finite increasing sequences of $\alpha$-$d$-objects, and $A$ has the following descriptions:
    \begin{itemize}
        \item $\vec{\mu} \leq_A \vec{\tau}$ iff $\vec{\mu} \sqsubseteq \vec{\tau}$ ($\vec{\mu}$ is an initial segment of $\vec{\tau}$).
        \item $\Lev_n(A)$ is the collection of $\langle \mu_0,\cdots, \mu_n \rangle$ in $A$, so they have lengths $n+1$.
        \item We require that $\Lev_0(A) \in \vec{E}_\alpha(d)$.
        \item For $\vec{\mu} \in A$, define $\Succ_A(\vec{\mu})=\{\tau \mid \vec{\mu} {}^\frown \langle \tau \rangle \in A\}$. We require that $\Succ_A(\vec{\mu}) \in \vec{E}_\alpha(d)$.
    \end{itemize}

    \item If $A$ is an $\alpha$-$d$-tree and $\mu \in \Lev_0(A)$, define $A_{\langle \mu \rangle}=\{ \vec{\tau} \mid \langle \mu \rangle{}^\frown \vec{\tau} \in A\}$, and we recursively define $A_{\langle \mu_0 ,\cdots, \mu_n \rangle}=(A_{\langle \mu_0 \cdots, \mu_{n-1} \rangle})_{\langle \mu_n \rangle}$.

    \item Fix $d^\prime \subseteq d$ an $\alpha$-domain and $\vec{\mu}=\langle \mu_0,\cdots, \mu_{n-1} \rangle$ is a finite increasing sequence of $\alpha$-$d$-objects, define $\vec{\mu} \restriction d^\prime=\langle \mu_0 \restriction d,\cdots, \mu_{n-1} \restriction d^\prime \rangle$. If we assume that $A$ is an $\alpha$-$d$-tree, define $A \restriction d^\prime=\{\vec{\mu} \restriction d^\prime \mid \vec{\mu} \in A\}$. Then $A \restriction d^\prime$ is an $\alpha$-$d^\prime$-tree.

    \item If $d^\prime \supseteq d$ is an $\alpha$-domain, and $A$ is an $\alpha$-$d$-tree, the {\em pullback of $A$ to $d^\prime$}, is $\{\vec{\mu} \in [\OB_\alpha(d^\prime)]^{<\omega} \mid \vec{\mu}$ is increasing and $\vec{\mu} \restriction d \in A\}$. Note that the pullback is an $\alpha$-$d^\prime$-tree.

    \item A tree $A$ is {\em generated by} $B \in \vec{E}_\alpha(d)$ if $\Lev_0(A)=B$, and for $\vec{\mu}=\langle \mu_0, \cdots, \mu_{n-1} \rangle \in A$, $\Succ_A(\vec{\mu})=\{\tau \in B \mid \mu_{n-1}<\tau\}$. Such a tree is an $\alpha$-$d$-tree. Furthermore, every $\alpha$-$d$-tree $A$ has a sub $\alpha$-$d$-tree which is generated by some $B \in \vec{E}_\alpha(d)$: for each $\nu<\alpha$, let $X_\nu=\cap_{\vec{\mu} \in T,\vec{\mu}(\alpha)\leq \nu}\Succ_A(\vec{\mu})$, and $B=\Delta_\nu X_\nu$. We assume that every $d$-tree $A$ is generated by some $B \subseteq \mathcal{B}_d$.

    \item We write $A(\alpha)=\{\vec{\mu}(\alpha) \mid \vec{\mu} \in A\}$. If $A$ is generated by $B$, then $A(\alpha)=B(\alpha)=\{\mu(\alpha) \mid \mu \in B\}$.

    \item If $A$ is an $\alpha$-$d$-tree and $\tau$ is an object, define $A \downarrow \tau=\{\vec{\mu} \downarrow \tau \mid \forall i(\mu_i<\tau \text{ and } \circ(\mu_i(\alpha))<\circ(\tau(\alpha)))\}$. By the coherence, assume that for each $\tau$, $A \downarrow \tau$ is an $\tau(\alpha)$-$\tau[d \cap \dom(\tau)]$-tree, with respects to $\vec{E}_{\tau(\alpha)}(\tau[d \cap \dom(\tau)])$.   
    \end{itemize} 

\begin{remark}
\label{ppoitre}
    For every $d$-tree $A$ and $\nu<\alpha$, we assume that $\{\vec{\mu} \in A \mid \vec{\mu}(\alpha)=\mu_{|\vec{\mu}|-1}(\alpha)=\nu\}$ has size at most $\nu^{++}$.
\end{remark}    

\section{The first few levels}
\label{firfew}
We consider the forcings at the first $\omega$ inaccessible cardinals, so, the extenders are not involved. We first analyze just for the first few inaccessible cardinals concretely, which will be served as the first few basic cases for our induction scheme for the forcings in the general levels, which will be listed later in Proposition \ref{indsch}.

\subsection{The first inaccessible cardinal}
\label{firlvl}
Let $\alpha_0$ be the least inaccessible cardinal. The following describe the scenario at the level $\alpha_0$.

\begin{itemize}

    \item The forcing $P_{\alpha_0}$ consists of $\langle f \rangle$ where $f \in C(\alpha_0^+,\alpha_0^{++})$. For $\langle f \rangle, \langle g \rangle \in P_{\alpha_0}$, define $\langle f \rangle \leq_{\alpha_0} \langle g \rangle$ iff $f \leq_{\alpha_0}^* g$ iff $f \supseteq g$.

    \item Let $\dot{C}_{\alpha_0}$ be a $P_{\alpha_0}$-name for the set $\{\alpha_0\}$.

    \item Let $\dot{P}_{\alpha_0/\alpha_0}$ be a $P_{\alpha_0}$-name of the trivial forcing  $(\{\emptyset\},\leq,\leq^*)$. We write $P_{\alpha_0}[G]=\dot{P}_{\alpha_0/\alpha_0}[G]$.

    \item In $V^{P_{\alpha_0}}$, let $\dot{C}_{\alpha_0/\alpha_0}$ be a $\dot{P}_{\alpha_0/\alpha_0}$-name of the empty set.

\end{itemize}

The forcing at the first inaccessible cardinal has nothing particularly interesting. The name $\dot{C}_{\alpha_0}$ will be served as the initial approximation of the final club where GCH fails at its limit points.
The quotient forcing like $\dot{P}_{\alpha_0/\alpha_0}$ will show its importance later. $\dot{C}_{\alpha_0/\alpha_0}$ will also be considered for an approximation of the final club.
It will be more suggestive to write $\dot{P}_{\check{\alpha}_0/\alpha_0}$ since in general, the ordinal which appears for the numerator, like $\check{\alpha}_0$, may be a non-trivial name of an ordinal. Since this is a check name, we omit the check symbol.
A trivial remark is that forcing $P_{\alpha_0}*\dot{P}_{\alpha_0/\alpha_0}$ is equivalent to $P_{\alpha_0}$.

\subsection{The second inaccessible cardinal}
Let $\alpha_0<\alpha_1$ be the first two inaccessible cardinals.

\begin{definition}
\label{twolvl}
The forcing $P_{\alpha_1}$ consists of two kinds of conditions (apart from the weakest condition). Conditions of different kinds are not compatible.
\begin{enumerate}
    \item The first kind consists of $\langle f \rangle$ in $C(\alpha_1^+,\alpha_1^{++})$. For $\langle f \rangle$ and $\langle g \rangle$ which are of first kind, define $\langle f \rangle \leq_{\alpha_1} \langle g \rangle$ iff $\langle f \rangle \leq_{\alpha_1}^* \langle g \rangle$ iff $f \supseteq g$. 

    \item The second kind consists of $p=(\langle f_0 \rangle,\langle \dot{P}_{\dot{\xi}/\alpha_0},\dot{q}_0\rangle) {}^\frown \langle f_1 \rangle$, where
    \begin{itemize}
        \item $f_0 \in C(\alpha_0^+,\alpha_0^{++})$.
        \item $\Vdash_{\alpha_0}``\leq \alpha_0 \leq \dot{\xi}<\alpha_1$ is strongly inaccessible" (in this case, we can assume that $\dot{\xi}$ is $\alpha_0$, or more formally, $\check{\alpha}_0$).
        \item $\Vdash_{\alpha_0} ``\dot{q}_0 \in \dot{P}_{\dot{\xi}/\alpha_0}"$ (we can assume $\dot{q}_0=\check{\emptyset}$).
    
        \item $\dom(f_1)$ is an $\alpha_1$-domain, and for $\gamma \in \dom(f_1)$, $f_1(\gamma)$ is 
 a $P_{\alpha_0}*\dot{P}_{\dot{\xi}/\alpha_0}$-name, $\Vdash_{P_{\alpha_0}*\dot{P}_{\xi}/\alpha_0} ``f_1(\gamma)<\alpha_1"$.
 \item For such a condition $p$, define $p \restriction P_{\alpha_0}=\langle f_0 \rangle$.
    \end{itemize}
    From now, we replace $\dot{\xi}$ by $\alpha_0$. We view $(\langle f_0 \rangle, \langle \dot{P}_{\alpha_0/\alpha_0},\dot{q}_0 \rangle)$ or $(\langle f_0 \rangle, \dot{q}_0)$ as a condition in $P_{\alpha_0}* \dot{P}_{\alpha_0/\alpha_0}$.
    We say that $$(\langle f_0 \rangle,\langle \dot{P}_{\alpha_0/\alpha_0},\dot{q}_0 \rangle) {}^\frown \langle f_1 \rangle\leq_{\alpha_1} (\langle g_0 \rangle,\langle \dot{P}_{\alpha_0/\alpha_0},\dot{r}_0 \rangle) {}^\frown \langle g_1 \rangle\text{ iff }$$ 
    $$(\langle f_0 \rangle,\langle \dot{P}_{\alpha_0/\alpha_0},\dot{q}_0\rangle) {}^\frown \langle f_1 \rangle\leq_{\alpha_1}^* (\langle g_0 \rangle,\langle \dot{P}_{\alpha_0/\alpha_0},\dot{r}_0\rangle) {}^\frown \langle g_1 \rangle\text{ iff }$$
    $f_0 \supseteq g_0, \dom(f_1) \supseteq \dom(g_1), \text{ and for } \gamma \in \dom(g_1), (\langle f_0 \rangle,\dot{q}_0) \Vdash_{P_{\alpha_0}*\dot{P}_{\alpha_0/\alpha_0}}``f_1(\gamma)=g_1(\gamma)".$
\end{enumerate}
\end{definition}

Let $\dot{C}_{\alpha_1}$ be a $P_{\alpha_1}$-name such that for $p$ of the first kind, $p \Vdash_{\alpha_1} \dot{C}_{\alpha_1}=\{\alpha_1\}$, and for $p$ of the second kind, $p \Vdash_{\alpha_1} ``\dot{C}_{\alpha_1}=\{\alpha_0,\alpha_1\}"$.
We now define different types of quotients.

\begin{itemize}
    \item $\dot{P}_{\alpha_1/\alpha_1}$ is a $P_{\alpha_1}$-name of the trivial forcing, with the obvious extension and the obvious direct extension. In $V^{P_{\alpha_1}}$, let $\dot{C}_{\alpha_1/\alpha_1}$ be a $\dot{P}_{\alpha_1/\alpha_1}$-name of the empty set.

\item The quotient $\dot{P}_{\alpha_1/\alpha_0}$ is a $P_{\alpha_0}$-name of the following forcing notion. Let $G$ be $P_{\alpha_0}$-generic. The forcing $P_{\alpha_1}[G]:=\dot{P}_{\alpha_1/\alpha_0}[G]$ consists of $(\langle P_{\alpha_0}[G], \emptyset \rangle) {}^\frown \langle f \rangle$ where $\Vdash_{\dot{P}_{\alpha_0/\alpha_0}[G]}``f \in C(\alpha_1^+,\alpha_1^{++})"$ ($C(\alpha_1^+,\alpha_1^{++})$ is considered in $(V[G])^{\dot{P}_{\alpha_0/\alpha_0}[G]}=V[G])$, and $\dom(f) \in V$ Note that $\emptyset$ is considered as the condition in $P_{\alpha_0}[G]$. The extension and the direct extension are the same and are defined as the following. We assume that for each $P_{\alpha_0}$ of a condition in $\dot{P}_{\alpha_1/\alpha_0}$ is of the form $p_0=(\langle \dot{P}_{\alpha_0/\alpha_0},\check{\emptyset}) {}^\frown \langle \dot{f} \rangle$. We say that $p \in P_{\alpha_0}$ {\em interprets $p_0$} if $p$ decides $\dot{\dom(f)}$. The collection of such $p$ is open dense and if $p$ interprets $p_0$, we may write $f$ where $\dom(f)$ is the domain where $p_0$ interprets $\dot(\dom{f})$. Then we can write $p_0 {}^\frown p_1$ as $(p_0,\langle \dot{P}_{\alpha_0/\alpha_0},\check{\emptyset}) {}^\frown \langle f \rangle$. For $p_0,p_1$ which are $P_{\alpha_0}$-name of conditions in $\dot{P}_{\alpha_1/\alpha_0}$, $\Vdash_{\alpha_0} ``p_0 \leq p_1$ iff $\exists p \in \dot{G}_{P_{\alpha_0}}$ $p$ interprets $p_0$ and $p_1$, and $p{}^\frown p_0 \leq p^\frown p_1"$. Back to the ground model, in $V^{P_{\alpha_0}}$, let $\dot{C}_{\alpha_1/\alpha_0}$ be the $\dot{P}_{\alpha_1/\alpha_0}$-name for $\{\alpha_1\}$. The point of having an empty set in the condition because it is more natural to translate a condition in $P_{\alpha_1}$ of the second kind to a condition in $\dot{P}_{\alpha_1/\alpha_0}$, namely, for each $p=(\langle f_0 \rangle,\langle \dot{P}_{\dot{\xi}/\alpha_0},\dot{q}_0\rangle) {}^\frown \langle f_1 \rangle$ in $P_{\alpha_1}$, we have that $\Vdash_{\alpha_0}``(\langle \dot{P}_{\alpha_0/\alpha_0},\dot{q} \rangle){}^\frown \langle f_1 \rangle \in \dot{P}_{\alpha_1/\alpha_0}"$. This is because $\dot{q}$ is always interpreted as the empty set in $\dot{P}_{\alpha_0/\alpha_0}$, and $f_1$ is a function whose range contains names of ordinals in with respect to the correct forcing. Note that $\{p \in P_{\alpha_1} \mid p \text{ is of the second kind}\}$ can be densely embedding in $P_{\alpha_0}* \dot{P}_{\alpha_1/\alpha_0}$ in the sense of $\leq$.

\end{itemize}

  The subforcing of $P_{\alpha_1}$ containing conditions of second kinds is nothing but a two-step iteration of the Cohen forcings, except that the domains can always be decided by the weakest element to be in the ground model.

  \begin{definition}
      The {\em forcings at level $\alpha$} are the forcings of the form $P_\alpha$ or $P_{\alpha/\beta}$.
  \end{definition}

\section{The induction scheme}
\label{theindsch}
We are now stating the induction scheme, and point out that it holds for the basic cases.

\begin{proposition}[The induction scheme]
\label{indsch}
Let $\alpha$ be an inaccessible cardinal. Here are the properties for the forcings at level $\alpha$.

\begin{enumerate}
    \item \label{1} The basic properties of the forcing $(P_\alpha,\leq, \leq^*)$.
    \begin{itemize}
        \item $|P_\alpha| = \alpha^{++}$.
        \item $(P_\alpha,\leq)$ is $\alpha^{++}$-c.c.
        \item $(P_\alpha,\leq,\leq^*)$ has the Prikry property.
    \end{itemize}
    \item \label{2} The $P_\alpha$-name of the set $\dot{C}_\alpha$. Let $C_\alpha=\dot{C}_\alpha[G]$ where $G$ is generic over $P_\alpha$.
    \begin{itemize}
        \item $C_\alpha \subseteq \alpha+1$, $\max(C_\alpha)=\alpha$.
        \item If $\circ(\alpha)=0$, then $C_\alpha \cap \alpha$ is a bounded subset of $\alpha$.
        \item If $\circ(\alpha)>0$, then $C_\alpha \cap \alpha$ is a club subset of $\alpha$.
        \item $C_\alpha$ contains only inaccessible cardinals of $V$.
    \end{itemize}
    \item  \label{3} Cardinals and cofinalities in the extension.
    \begin{itemize}
        \item If $\circ(\alpha)=0$, then $\alpha$ remains regular in the extension over $P_\alpha$.
        \item If $\circ(\alpha)>0$, then when we force over $P_\alpha$, $\alpha$ is singularized and $\cf(\alpha)=\cf(\omega^{\circ(\alpha)})$ (the ordinal exponentiation).
        \item In the extension, for every cardinal $\beta \leq \alpha$, $2^\beta=\beta^+$ or $2^\beta=\beta^{++}$, and $2^\beta=\beta^{++}$ iff $\beta \in \lim(C_\alpha)$.
        \item For each $V$-regular $\beta \leq \alpha$, $\beta$ is singularized iff $\beta \in \lim(C_\alpha)$.
    \end{itemize}
    \item \label{4} $\dot{P}_{\alpha/\alpha}$ is always a $P_\alpha$-name of the trivial forcing $(\{\emptyset\},\leq,\leq^*)$.
    \item \label{5} The factor $\dot{P}_{\alpha/\beta}$ for $\beta< \alpha$.
    \begin{itemize}
        \item $\{p \in P_\alpha \mid p \restriction P_\beta$ exists$\}$ densely embeds into $P_\beta*\dot{P}_{\alpha/\beta}$ in the $\leq$ sense.
        \item $\Vdash_\beta `` |\dot{P}_{\alpha/\beta}| =\alpha^{++}, (\dot{P}_{\alpha/\beta},\leq)$ is $\alpha^{++}$-c.c.".
        \item $\Vdash_\beta ``(\dot{P}_{\alpha/\beta},\leq^*)$ is $\beta^*$-closed", where $\beta^*=\min\{\xi>\beta \mid \xi$ is strongly inaccessible$\}$.
        \item $\Vdash_\beta ``(\dot{P}_{\alpha/\beta},\leq,\leq^*)$ has the Prikry property".
    \end{itemize}
    \item \label{6} The quotient set $C_{\alpha/\beta}$: Let $G$ be $P_\beta$-generic over $V$ and $H$ be $\dot{P}_{\alpha/\beta}[G]$-generic over $V[G]$. Let $C_{\alpha/\beta}=\dot{C}_{\alpha/\beta}[G][H]$. 
    \begin{itemize}
        \item If $\beta=\alpha$, then $C_{\alpha/\beta}=\emptyset$. 
        \item Suppose $\beta<\alpha$. Then $I=G*H$ is $P_\alpha$-generic, which introduces the set $C_\alpha$. Also, $G$ introduces the set $C_\beta$.
        Then $C_{\alpha/\beta} \subseteq (\beta,\alpha]$, and $C_\alpha=C_\beta \sqcup C_{\alpha/\beta}$.
    \end{itemize}
   \item  \label{7} Double quotients: Let $\gamma \leq \beta \leq \alpha$ and $G$ is $P_\gamma$-generic. Then $\dot{P}_{\alpha/\beta}[G]$ is defined as 
   $$\Vdash_{P_\beta[G]}``p \in \dot{P}_{\alpha/\beta}[G] \text{ iff } p \in P_\alpha[G*\dot{H}]",$$
   where $\dot{H}$ is the canonical $P_{\beta}[G]$-generic.
 \end{enumerate} 
 \end{proposition}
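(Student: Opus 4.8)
The plan is to verify the seven clauses directly for the base of the recursion, i.e. for the forcings at the first $\omega$ inaccessible cardinals described in Section \ref{firfew}; the inductive step producing the forcings at all higher levels is the content of Sections \ref{belthefir}--\ref{generallevel}, so here only the base cases need checking. Concretely it suffices to treat $\alpha=\alpha_0$ and $\alpha=\alpha_1$, the further levels $\alpha_n$ ($n<\omega$) being $n$-step analogues handled identically. At these levels the forcings carry no extenders, so $\circ(\alpha_i)=0$ and in every clause it is the ``$\circ(\alpha)=0$'' case that is relevant.

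For clause (1): $P_{\alpha_0}=C(\alpha_0^+,\alpha_0^{++})\cong\Add(\alpha_0^+,\alpha_0^{++})$, and, via the dense embedding built into Definition \ref{twolvl}, the nontrivial part of $P_{\alpha_1}$ is the first-kind copy of $C(\alpha_1^+,\alpha_1^{++})$ together with the two-step Cohen iteration $P_{\alpha_0}\ast\dot P_{\alpha_1/\alpha_0}$; since conditions of the two kinds are incompatible they can be counted and analyzed separately. The bound $|P_\alpha|=\alpha^{++}$ is a GCH count over $\alpha$-domains (all of size $\le\alpha$, all lying in $V$) with ranges or names of size $\le\alpha$. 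The $\alpha^{++}$-c.c.\ comes from the $\Delta$-system lemma: the essential point, which recurs throughout the paper, is that the Cohen domains are required to lie in the ground model, so the $\Delta$-system argument can be run in $V$ on the domains; for the iterated part one combines the $\alpha_0^{++}$-c.c.\ of $P_{\alpha_0}$ (which is $<\alpha_1^{++}$) with the forced $\alpha_1^{++}$-c.c.\ of $\dot P_{\alpha_1/\alpha_0}$, the latter holding because $P_{\alpha_0}$, of size $<\alpha_1$, preserves GCH at $\alpha_1$. The Prikry property is trivial since at these levels $\leq\,=\,\leq^{*}$.

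Clauses (2), (4), (6) are read off the definitions: $\dot C_{\alpha_0}$, $\dot C_{\alpha_1}$ are check-names for the finite sets $\{\alpha_0\}$, resp.\ $\{\alpha_1\}$ or $\{\alpha_0,\alpha_1\}$ according to the kind of condition, all members inaccessible in $V$ and the intersection with $\alpha$ bounded (consistent with $\circ(\alpha)=0$); $\dot P_{\alpha/\alpha}$ is trivial by fiat; and $C_{\alpha_1/\alpha_0}=\{\alpha_1\}$, so $C_{\alpha_1}=\{\alpha_0,\alpha_1\}=\{\alpha_0\}\sqcup\{\alpha_1\}=C_{\alpha_0}\sqcup C_{\alpha_1/\alpha_0}$ with $C_{\alpha_1/\alpha_0}\subseteq(\alpha_0,\alpha_1]$. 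For clause (3) one uses that $C(\alpha^+,\alpha^{++})$ is $<\alpha^+$-closed, hence adds no new $\le\alpha$-sequences: this keeps $\alpha$ inaccessible (in particular regular, as needed when $\circ(\alpha)=0$), preserves all cardinals and cofinalities $\le\alpha^+$, and leaves $2^\beta=\beta^+$ for every $\beta\le\alpha$, matching $\lim(C_\alpha)=\emptyset$, while cardinals $\ge\alpha^{++}$ are preserved by the chain condition; for $P_{\alpha_1}$ of the second kind the same applies to each factor of $P_{\alpha_0}\ast\dot P_{\alpha_1/\alpha_0}$, and no $V$-regular cardinal is singularized. Clause (5) is vacuous at $\alpha_0$ (no inaccessible below it), and at $\alpha_1$ reduces to $\beta=\alpha_0$: the dense embedding of the second-kind conditions into $P_{\alpha_0}\ast\dot P_{\alpha_1/\alpha_0}$ is exactly Definition \ref{twolvl}; $\Vdash_{\alpha_0}|\dot P_{\alpha_1/\alpha_0}|=\alpha_1^{++}$ and its $\alpha_1^{++}$-c.c.\ are as in clause (1); $\dot P_{\alpha_1/\alpha_0}[G]=C(\alpha_1^+,\alpha_1^{++})^{V[G]}$ is $<\alpha_1^+$-closed, hence $\beta^*$-closed for $\beta^*=\alpha_1$; and the Prikry property is again trivial. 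Finally clause (7) is immediate at these levels, where every triple $\gamma\le\beta\le\alpha$ comes from $\{\alpha_0,\alpha_1\}$: when $\gamma=\beta$ there is nothing to prove, for $\gamma=\alpha_0<\beta$ the only option is $\beta=\alpha_1$ where $\dot P_{\alpha_1/\alpha_1}$ is trivial, and for $\gamma=\alpha_0=\beta$ the displayed equation is precisely the definition of $\dot P_{\alpha_1/\alpha_0}$.

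I expect the only spot needing genuine care to be the chain-condition verification for the iterated part of $P_{\alpha_1}$: one must check that the two-kinds split behaves well, that the $\Delta$-system argument really goes through even though the condition values in $\dot P_{\alpha_1/\alpha_0}$ are $P_{\alpha_0}$-names rather than ground-model ordinals (this is exactly why the construction insists the Cohen domains lie in $V$), and that $P_{\alpha_0}$ preserves enough of GCH for the $\Delta$-system lemma to apply in $V[G_{\alpha_0}]$. Everything else is bookkeeping that follows from the explicit definitions in Section \ref{firfew}.
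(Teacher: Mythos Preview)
Your proposal is correct and follows essentially the same route as the paper: verify the clauses of the scheme for the base cases $\alpha_0,\alpha_1$ directly (the paper does $\alpha_1$ explicitly in Proposition~\ref{lis} and declares $\alpha_0$ ``easy''), deferring the general inductive step to Sections~\ref{belthefir}--\ref{generallevel}. The one small divergence is the $\alpha_1^{++}$-c.c.\ for second-kind conditions: you invoke the two-step iteration lemma (combining the $\alpha_0^{++}$-c.c.\ of $P_{\alpha_0}$ with the forced $\alpha_1^{++}$-c.c.\ of the quotient), whereas the paper argues directly inside $P_{\alpha_1}$ by first shrinking so that all conditions share the same lower block $p_0$ (possible since that block lives in $V_{\alpha_1}$) and then running a $\Delta$-system on the top Cohen parts $\{f_1^\gamma\}$; both are valid and equally short. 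Your identification $\dot P_{\alpha_1/\alpha_0}[G]=C(\alpha_1^+,\alpha_1^{++})^{V[G]}$ is a slight abuse---the quotient is only the dense subforcing with ground-model domains and carries the (here trivial) interleaving component---but this does not affect the closure or chain-condition arguments.
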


 We always skip (\ref{4}) and (\ref{7}) of Proposition \ref{indsch} since they will follow directly from the definitions.
 Showing the induction scheme of the forcings at level the first inaccessible cardinal is easy. For a non-triviality, we now show that the forcing $P_{\alpha_1}$ as described in Definition \ref{twolvl} satisfies the induction scheme.

 \begin{proposition}
 \label{lis}
     Let $\alpha_0<\alpha_1$ be the first two inaccessible cardinals. Then forcings at level $\alpha_1$ satisfies the induction scheme.
 \end{proposition}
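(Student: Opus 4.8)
The plan is to verify each numbered clause of Proposition~\ref{indsch} for the two forcings at level $\alpha_1$, namely $P_{\alpha_1}$ itself and the quotients $\dot P_{\alpha_1/\alpha_1}$, $\dot P_{\alpha_1/\alpha_0}$. Since $\circ(\alpha_1)=0$ there is no extender action; the entire content is that $P_{\alpha_1}$ is essentially a two-step iteration of Cohen-type forcings $C(\alpha_0^+,\alpha_0^{++})$ and $C(\alpha_1^+,\alpha_1^{++})$, glued to a one-step Cohen forcing along the first kind of condition. So most clauses reduce to standard facts about $\Add(\alpha^+,\alpha^{++})$ under GCH, transported through Remark~\ref{densegrounddomain}. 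Clauses~(\ref{4}) and~(\ref{7}) are skipped as stated, and clauses about $\circ(\alpha_1)>0$ are vacuous.

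For clause~(\ref{1}): $|P_{\alpha_1}|=\alpha_1^{++}$ because first-kind conditions number $|C(\alpha_1^+,\alpha_1^{++})|=\alpha_1^{++}$ (by GCH, $\alpha_1^{\alpha_1}=\alpha_1^+$ so there are $\alpha_1^{++}$ domains and $\alpha_1^{++}$ functions on each), and second-kind conditions are bounded by $|P_{\alpha_0}|\cdot|C(\alpha_1^+,\alpha_1^{++})|^{V^{P_{\alpha_0}}}=\alpha_1^{++}$. The $\alpha_1^{++}$-c.c.\ for $\leq$ follows from the standard $\Delta$-system argument on the domains $\dom(f_1)$ (or $\dom(f)$), using that $\alpha_1^{++}$ is regular and $\alpha_1^{<\alpha_1}=\alpha_1$; incompatibility of the two kinds means we can treat them separately. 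The Prikry property: $\leq$ and $\leq^*$ coincide on both kinds, so it is trivial.

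For clause~(\ref{2}): $\dot C_{\alpha_1}$ is forced to be $\{\alpha_1\}$ on first-kind conditions and $\{\alpha_0,\alpha_1\}$ on second-kind conditions, hence always a finite set of inaccessibles with max $\alpha_1$, and $C_{\alpha_1}\cap\alpha_1$ is bounded in $\alpha_1$ as required in the $\circ(\alpha_1)=0$ case. For clause~(\ref{3}): $\alpha_1$ stays regular because $P_{\alpha_1}$ is $\alpha_1^{++}$-c.c.\ and, restricted below $\alpha_1$, has size $<\alpha_1$ on the first kind and is $P_{\alpha_0}*\dot C(\alpha_1^+,\cdot)$ on the second kind (which adds no bounded subsets of $\alpha_1$ beyond $V^{P_{\alpha_0}}$, and $P_{\alpha_0}$ is small); the power-set computation $2^{\beta}=\beta^+$ for all $\beta\le\alpha_1$ except $\beta\in\lim(C_{\alpha_1})=\emptyset$ follows from GCH being preserved — each Cohen factor $C(\eta^+,\eta^{++})$ forces $2^{\eta^+}=\eta^{++}$ but leaves $2^{\eta^+}$ at its GCH value $\eta^{++}$ anyway since it adds $\eta^{++}$ subsets of $\eta^+$ and is $\eta^{++}$-c.c. — so no cardinals change and no $2^\beta$ jumps above $\beta^+$ at any $\beta\le\alpha_1$; since $\lim(C_{\alpha_1})=\emptyset$ the ``iff'' holds vacuously, and likewise no $V$-regular $\beta\le\alpha_1$ is singularized.

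For clause~(\ref{5}) (the factor $\dot P_{\alpha_1/\beta}$ with $\beta=\alpha_0$, the only nontrivial case): the dense embedding of $\{p\in P_{\alpha_1}\mid p\restriction P_{\alpha_0}\text{ exists}\}=\{p\text{ of second kind}\}$ into $P_{\alpha_0}*\dot P_{\alpha_1/\alpha_0}$ is exactly the translation spelled out in the bulleted discussion after Definition~\ref{twolvl}, matching $p=(\langle f_0\rangle,\dot q_0){}^\frown\langle f_1\rangle$ with $(\langle f_0\rangle,\dot q_0)\in P_{\alpha_0}*\dot P_{\alpha_0/\alpha_0}$ and $(\langle\dot P_{\alpha_0/\alpha_0},\check\emptyset\rangle){}^\frown\langle f_1\rangle$ as the name for the quotient condition; I would check density and order-preservation directly from the definitions of $\leq_{\alpha_1}$ and of $\leq$ on $\dot P_{\alpha_1/\alpha_0}$. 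The cardinality and $\alpha_1^{++}$-c.c.\ of $\dot P_{\alpha_1/\alpha_0}$ in $V^{P_{\alpha_0}}$ are again the Cohen facts, now computed in $V[G]$ where GCH still holds at $\alpha_1^+$. The $\beta^*$-closure of $(\dot P_{\alpha_1/\alpha_0},\leq^*)$, with $\beta^*=\alpha_1$ here, holds because $\leq^*$ on $C(\alpha_1^+,\alpha_1^{++})$ is $\supseteq$ on functions with domains of size $\le\alpha_1<\alpha_1$-wait — the relevant closure is $\alpha_1^+$-closure coming from $|\dom(f)|\le\alpha_1$, and more precisely $<\alpha_1^+$-directed-closure suffices to give $\alpha_1$-closure; but the statement only asks for $\beta^*=\alpha_1$-closure of the $\leq^*=\leq$ order, and since decreasing $\leq^*$-chains of length $<\alpha_1$ have domains of total size $<\alpha_1\le\alpha_1$... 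I would unwind: a descending sequence of length $<\alpha_1$ in $C(\alpha_1^+,\alpha_1^{++})$ has union of domains of size $<\alpha_1$, hence a lower bound, using Remark~\ref{densegrounddomain} to keep domains in $V[G]$. The Prikry property of the quotient is again trivial since $\leq=\leq^*$ there.

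For clause~(\ref{6}): with $\beta=\alpha_0<\alpha_1$, the generic $I=G*H$ for $P_{\alpha_0}*\dot P_{\alpha_1/\alpha_0}$ corresponds under the dense embedding of~(\ref{5}) to a second-kind generic for $P_{\alpha_1}$, which forces $C_{\alpha_1}=\{\alpha_0,\alpha_1\}$; meanwhile $G$ forces $C_{\alpha_0}=\{\alpha_0\}$ and $\dot C_{\alpha_1/\alpha_0}$ is the name for $\{\alpha_1\}$, so $C_{\alpha_1}=C_{\alpha_0}\sqcup C_{\alpha_1/\alpha_0}=\{\alpha_0\}\sqcup\{\alpha_1\}$ with $C_{\alpha_1/\alpha_0}\subseteq(\alpha_0,\alpha_1]$ as required; the case $\beta=\alpha_1$ gives $C_{\alpha_1/\alpha_1}=\emptyset$ by definition of the trivial quotient.

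The main obstacle I anticipate is purely bookkeeping: correctly matching the three slightly different presentations of ``the same'' condition — the second-kind element of $P_{\alpha_1}$, the element of $P_{\alpha_0}*\dot P_{\alpha_1/\alpha_0}$, and the $P_{\alpha_0}$-name of an element of $\dot P_{\alpha_1/\alpha_0}$ (with its cosmetic empty coordinate $\langle\dot P_{\alpha_0/\alpha_0},\check\emptyset\rangle$) — and checking that the ordering $\leq_{\alpha_1}$ translates to the ordering on $\dot P_{\alpha_1/\alpha_0}$ under this matching, in particular the clause ``$\exists p\in\dot G_{P_{\alpha_0}}$, $p$ interprets $p_0,p_1$ and $p{}^\frown p_0\le p{}^\frown p_1$''. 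Once the dense embedding of clause~(\ref{5}) is nailed down, every remaining clause is either a standard GCH/Cohen computation or an immediate consequence of the definitions.
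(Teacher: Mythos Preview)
Your proposal is correct and follows essentially the same route as the paper: verify each clause of the induction scheme directly, reducing everything to standard Cohen-forcing facts under GCH and the observation that $\leq=\leq^*$ at this level.

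One small technical slip worth tightening: in the $\beta^*$-closure argument for $\dot P_{\alpha_1/\alpha_0}$, you write ``keep domains in $V[G]$'', but the definition of the quotient requires $\dom(f)\in V$. A descending sequence $\langle f_1^\gamma\mid\gamma<\gamma^*\rangle$ lives in $V[G]$, so the naive union $\bigcup_\gamma\dom(f_1^\gamma)$ is computed in $V[G]$ and need not lie in $V$. The paper's fix (which you should adopt) is to take, in $V$, the set $d^*=\bigcup_{\gamma<\gamma^*}\{d\mid\exists p\in P_{\alpha_0}\,(p\text{ decides }\dom(f_1^\gamma)=d)\}$; since $|P_{\alpha_0}|=\alpha_0^{++}<\alpha_1$ and $\gamma^*<\alpha_1$, this is a union of fewer than $\alpha_1$ sets each of size $\le\alpha_1$, hence an $\alpha_1$-domain in $V$, and then one builds the lower bound $f^*$ with $\dom(f^*)=d^*$ inside $V[G]$.
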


 \begin{proof}
\begin{enumerate}
    \item \begin{itemize}
    \item The set of conditions in $P_{\alpha_1}$ of the first kind is $C(\alpha_1^+,\alpha_1^{++})$, whose size is $\alpha^{++}$. Conditions of the second kind are of the form $(\langle f_0 \rangle, \langle \dot{P}_{\dot{\xi}/\alpha_0},\dot{q}_0 \rangle) {}^\frown \langle f_1 \rangle$. We assume that the names are in their simplest form in the sense that $\dot{\xi}=\check{\alpha_0}$, $\dot{q}_0=\check{\emptyset}$. The part $(\langle f_0 \rangle, \langle \dot{P}_{\dot{\xi}/\alpha_0},\dot{q}_0 \rangle)$ is in $V_{\alpha_1}$. Then for each $\gamma \in \dom(f_1)$, $f_1(\gamma)$ is a $P_{\alpha_0}*\dot{P}_{\alpha_0/\alpha_0}$-name of an ordinal below $\alpha$. By replacing $f_1(\gamma)$ with its nice name, assume that $f_1(\gamma) \in V_{\alpha_1}$. Hence, the number of such $f_1$'s is $(\alpha_1^{++})^{\alpha_1}=\alpha_1^{++}$. Hence, $|P_{\alpha_1}|=\alpha_1^{++}$.

    \item Suppose that $X=\{ p^\gamma \mid \gamma<\alpha_1^{++} \}$ is an antichain of conditions in $P_{\alpha_1}$. By shrinking $X$, we may assume that $X$ contains conditions of the same kind. If it contains conditions of the first kind, then the standard $\Delta$-system applies. Suppose $X$ contains conditions of the second kind. By shrinking further, assume there is $p_0$ such that for every $\gamma$, $p^\gamma=p_0 {}^\frown \langle f_1^\gamma \rangle$. Then we can apply a standard $\Delta$-system argument on $\{ f_1^\gamma \mid \gamma<\alpha_1^{++} \}$, and we are done.

    \item Obvious, since $\leq$ and $\leq^*$ on $P_{\alpha_1}$ are the same.
    
    \end{itemize}

 \item Note that $\circ(\alpha_1)=0$. If $G$ contains conditions of the first kind, then $C_{\alpha_1}=\{\alpha_1\}$, and if $G$ contains conditions of the second kind, then $C_{\alpha_1}=\{\alpha_0,\alpha_1\}$. In either case, it is a subset of $\alpha_1+1$ whose maximum is $\alpha_1$. Also, $C_{\alpha_1} \cap \alpha_1$ is either $\emptyset$ or $\{\alpha_0\}$ which is bounded in $\alpha_1$, and $C_{\alpha_1}$ contains only inaccessible cardinals in $V$.

\item $\circ(\alpha_1)=0$, and the forcing $P_{\alpha_1}$ is equivalent to either a Cohen forcing $\Add(\alpha_1^+,\alpha_1^{++})$, or a two-step iteration of Cohen forcings $\Add(\alpha_0^+,\alpha_0^{++})* \Add(\alpha_1^+,\alpha_1^{++})$. In both cases, $\alpha_1$ remains regular, GCH still holds, and $\lim(C_\alpha)=\{\emptyset\}$.

\item $\dot{P}_{\alpha_1/\alpha_1}$ is a $P_{\alpha_1}$-name of the trivial forcing.

\item Consider $\dot{P}_{\alpha_1/\alpha_0}$.
\begin{itemize}
    \item For each $p=(\langle f_0 \rangle,\langle \dot{P}_{\dot{\xi}/\alpha_0},\dot{q}_0\rangle) {}^\frown \langle f_1 \rangle$, consider the map $\pi(p)=(\langle f_0 \rangle,(\langle \dot{q} \rangle) {}^\frown f_1 \rangle)$. Clearly, this map is a dense embedding from $\{p \in P_{\alpha_1} \mid p \restriction P_{\alpha_0}\}$ to $P_{\alpha_0}* \dot{P}_{\alpha_1/\alpha_0}$.
    \item Since $P_{\alpha_0}$ forces GCH, a similar argument as in $(1)$ shows that $\Vdash_{\alpha_0} `` |\dot{P}_{\alpha_1/\alpha_0}|=\alpha_1^{++}$, $(P_{\alpha_1/\alpha_0},\leq)$ is $\alpha_1^{++}$-c.c., "
    \item Let $G$ be $P_{\alpha_0}$-generic. Conditions in $P_{\alpha_1}[G]$ are of the form $(\langle \emptyset \rangle) {}^\frown \langle f_1 \rangle$. We ignore the empty set's part. Note that since $P_{\alpha_0}[G]:=\dot{P}_{\alpha_0/\alpha_0}[G]$ is trivial, so $f_1$ is just a Cohen condition in $V[G]$. We now assume that a condition in $P_{\alpha_1}[G]$ is $\langle f_1 \rangle$. Let $\langle f_1^\gamma \mid \gamma<\gamma^*\rangle$ be a decreasing sequence of conditions, where $\gamma^*<\alpha_1$. In $V$, let $d^*=\cup_{\gamma<\gamma^*} \{d \mid \exists p \in P_{\alpha_0}(p$ decides $\dom(f_1^\gamma)$ as $d)\}$. Then $d^* \in V$, and let $f^*$ be such that $\dom(f^*)=d^*$, and in $V[G]$, $f^* \leq f_1^\gamma$ for all $\gamma$. Then $f^*$ is as required.
    \item $\Vdash_{\alpha_0} ``\leq, \leq^*$ are the same in $\dot{P}_{\alpha_1/\alpha_0}$, hence has the Prikry property".
    \end{itemize}
    \item In $V^{P_{\alpha_1}*\dot{P}_{\alpha_1/\alpha_1}}$, $C_{\alpha_1/\alpha_1}$ is the empty set. In $V^{P_{\alpha_0}*\dot{P}_{\alpha_1/\alpha_0}}$, $C_{\alpha_1/\alpha_0}=\{\alpha_1\} \subseteq (\alpha_0,\alpha_1]$, and in this model, $C_{\alpha-0} \sqcup C_{\alpha_1/\alpha_0}=C_{\alpha_1}$, since it is the same model with the extension $V^{P_{\alpha_1}}$ using conditions of the second kind.
    \item Trivial since the definition is given.
\end{enumerate} 
 \end{proof}

 \begin{remark}
    \begin{enumerate}
        \item $P_{\alpha_0}* \dot{P}_{\alpha_1/\alpha_0}$ is equivalent to the subforcing $P_{\alpha_1}$ containing conditions of the second kind, and there is a natural translation from one generic to another. Namely, suppose that $G*H$ is such a generic object. Define $I=\{(p_0,\langle \dot{P}_{\alpha_0/\alpha_0},\dot{q} \rangle){}^\frown p_1 \mid p_0 \in G, \Vdash_{\alpha_0} ``(\langle \dot{P}_{\alpha_0/\alpha_0},\dot{q} \rangle) {}^\frown p_1 \in \dot{H}"\}$. Then $V[I]=V[G*H]$.
        \item If we force with conditions in $P_{\alpha_1}$ of the second kind, we can obtain an equivalent generic object from $P_{\alpha_0}* \dot{P}_{\alpha_1/\alpha_0}$ naturally. Namely, if $I$ is $P_{\alpha_1}$-generic containing conditions of the second kind, let $$G=\{\langle f_0 \rangle \mid \exists \dot{q},f_1 (\langle f_0,\langle \dot{P}_{\alpha_0/\alpha_0},\dot{q}\rangle){}^\frown \langle f_1 \rangle \in I\},$$ 
        and 
        $$H=\{(\langle \emptyset \rangle)^\frown \langle f_1[G] \rangle \mid \exists f_0,\dot{q} (\langle f_0,\langle \dot{P}_{\alpha_0/\alpha_0},\dot{q}\rangle){}^\frown \langle f_1 \rangle \in I\}.$$
        Then $G$ is $P_{\alpha_0}$-generic, $H$ is $P_{\alpha_1}[G]$-generic, and $V[I]=V[G*H]$.
        \end{enumerate}
\end{remark}

\section{Below the first measurable cardinal}
\label{belthefir}
Let $\alpha$ be a strongly inaccessible cardinal which is below the first $\alpha^*$ with $\circ(\alpha^*)=1$. We will assume that $\alpha$ is at least the $\omega+1$-th strongly inaccessible cardinal so that the conditions of arbitrarily length will appear at this stage.

\begin{definition}
    $P_\alpha$ consists of the conditions of the following kinds:

    \begin{itemize}
        \item The {\em pure conditions}, which are conditions of the form $\langle f \rangle$, where $f \in C(\alpha^+,\alpha^{++})$.

        \item The {\em impure conditions}, which are conditions of the form

        $$(\langle f_0 \rangle {}^\frown \langle \dot{P}_{\dot{\beta_0}/\alpha_0},\dot{q}_0\rangle ) {}^\frown \cdots {}^\frown (\langle f_{n-1} \rangle {}^\frown \langle \dot{P}_{\dot{\beta}_{n-1}/\alpha_{n-1}},\dot{q}_{n-1}\rangle){}^\frown \langle f \rangle,$$
        for some $n>0$, where
        \begin{itemize}
            \item $\alpha_0< \cdots <\alpha_{n-1}<\alpha$ are inaccessible.
            \item for all $i$, $\Vdash_{\alpha_i} ``\alpha_i \leq \dot{\beta_i}<\alpha_{i+1}"$, where $\alpha_n=\alpha$.
            \item $f_0 \in C(\alpha_0^+,\alpha_0^{++})$ and for $i>0$, $\dom(f_i)=d_i$ is an $\alpha_i$-domain (in the sense of $V$), and for $\zeta \in d_i$, $f_i(\zeta)$ is a $P_{\alpha_{i-1}}*\dot{P}_{\dot{\beta}_{i-1}/\alpha_{i-1}}$-name and $$\Vdash_{P_{\alpha_{i-1}}* \dot{P}_{\dot{\beta}_{i-1}/\alpha_{i-1}}}``f_i(\zeta)<\alpha_i".$$ In particular, $$\Vdash_{P_{\alpha_{i-1}}* \dot{P}_{\dot{\beta}_{i-1}/\alpha_{i-1}}}``f_i \in \dot{C}(\alpha_i^+,\alpha_i^{++}).$$
            \item $\dom(f)=d$ is an $\alpha$-domain, and for $\zeta \in d$, $f(\zeta)$ is a $P_{\alpha_{n-1}}*\dot{P}_{\dot{\beta}_{n-1}/\alpha_{n-1}}$-name and $$\Vdash_{P_{\alpha_{n-1}}*\dot{P}_{\dot{\beta}_{n-1}/\alpha_{n-1}}}``f(\zeta)<\alpha".$$
            In particular, 
            $$\Vdash_{P_{\alpha_{n-1}}*\dot{P}_{\dot{\beta}_{n-1}/\alpha_{n-1}}} ``f \in \dot{C}(\alpha^+,\alpha^{++})".$$
            \item for all $i$, $\Vdash_{\alpha_i} ``\dot{q}_i \in \dot{P}_{\dot{\beta}_i/\alpha_i}"$.
        \end{itemize}
    \end{itemize}
    By recursion, we consider 
    $$(\langle f_0 \rangle {}^\frown \langle \dot{P}_{\dot{\beta_0}/\alpha_0},\dot{q}_0\rangle ) {}^\frown \cdots {}^\frown \langle f_{i} \rangle$$
    as a condition in $P_{\alpha_i}$. Denote $p \restriction P_{\alpha_i}$ as the condition as bove. We also consider 
    $$(\langle f_0 \rangle {}^\frown \langle \dot{P}_{\dot{\beta_0}/\alpha_0},\dot{q}_0\rangle ) {}^\frown \cdots {}^\frown (\langle f_{i} \rangle,\langle \dot{P}_{\dot{\beta}_i/\alpha_i},\dot{q}_i\rangle)$$
    as a condition in $P_{\alpha_i} * \dot{P}_{\dot{\beta}_i/\alpha_i}$. Denote such a condition by $p \restriction (i+1)$.
\end{definition}

The ordering $\leq_\alpha$ and $\leq_{\alpha}^*$ will be the same. We only define $\leq_\alpha$. When we mention a condition $p$, we put the superscript $p$ to every component in the condition. If $p$ is the condition as in the definition, we write $n^p=n$, $\tp(p)=\langle f \rangle$.

\begin{definition}
    Let $$p_0=(\langle f_0 \rangle {}^\frown \langle \dot{P}_{\dot{\beta_0}/\alpha_0},\dot{q}_0\rangle ) {}^\frown \cdots {}^\frown (\langle f_{n-1} \rangle {}^\frown \langle \dot{P}_{\dot{\beta}_{n-1}/\alpha_{n-1}},\dot{q}_{n-1}\rangle){}^\frown \langle f \rangle,$$
    and
    $$p_1=(\langle g_0 \rangle {}^\frown \langle \dot{P}_{\dot{\xi_0}/\gamma_0},\dot{r}_0\rangle ) {}^\frown \cdots {}^\frown (\langle g_{n-1} \rangle {}^\frown \langle \dot{P}_{\dot{\xi}_{n-1}/\gamma_{n-1}},\dot{r}_{m-1}\rangle){}^\frown \langle g \rangle.$$
    We say that $p_0 \leq_\alpha p_1$ iff

    \begin{itemize}
        \item $n=m$.
        \item for $i<n$, $\alpha_i=\gamma_i$.
        \item $f_0 \supseteq g_0$, $\langle f_0 \rangle \Vdash_{\alpha_0} ``\dot{\beta}_0=\dot{\xi}_0 \text{ and } \dot{q}_0 \leq_{\dot{\beta}_0/\alpha_0} \dot{r}_0"$ (we can assume $\dot{\beta}_0=\dot{\xi}_0$).
        \item for $i>0$, $d_i^{p^0} \supseteq d_i^{p^1}$, and for $\zeta \in d_i^{p^1}$, $p \restriction i \Vdash_{P_{\alpha_i}*\dot{P}_{\dot{\beta}_i/\alpha_i}} ``f_i(\zeta)=g_i(\zeta)".$
        \item for $i>0$, $(p_0 \restriction i){}^\frown \langle f_i \rangle \Vdash_{\alpha_i} ``\dot{\beta}_i=\dot{\xi}_i$ and $\dot{q}_i \leq_{\dot{\beta}_i/\alpha_i} \dot{r}_i"$ (we can assume $\dot{\beta}_i=\dot{\xi}_i$).
        \item $\dom(f) \supseteq \dom(g)$ and for $\zeta \in \dom(g)$, $$p_0 \restriction n \Vdash_{P_{\alpha_{n-1}}*\dot{P}_{\dot{\beta}_{n-1}/\alpha_{n-1}}}``f(\zeta)=g(\zeta)".$$
    \end{itemize}

\end{definition}

We may also assume that $\dot{\xi}_i=\dot{\beta}_i$ for all $i$. The extension relation does not increase the length of a condition. For a generic $G$ containing a condition $p$, define $C_\alpha$ as the following: If $p$ is pure, then $C_\alpha=\{\alpha\}$. Assume $p$ is impure and $n=n^p$. Then $p \restriction n \in P_{\alpha_n}*\dot{P}_{\dot{\beta}_n/\alpha_n}$. Let $\beta_n=\dot{\beta}_n[G \restriction P_{\alpha_{n-1}}]$. By Proposition \ref{indsch} (\ref{2}) and (\ref{6}), $G \restriction (P_{\alpha_n}*\dot{P}_{\dot{\beta}_n/\alpha_n})$ introduces the set $C^\prime=C_{\alpha_{n-1}} \sqcup C_{\beta_{n-1}/\alpha_{n-1}} \subseteq \beta_{n-1}+1$ with $\max(C^\prime)=\beta_{n-1}$. Define $C_\alpha=C^\prime \cup \{\alpha\}$.
Still, this forcing does not change the cardinal arithmetic.
    
We now define $P_{\alpha/\beta}$ for $\beta \leq \alpha$. A key point is that we need $\{p \in P_\alpha \mid p \restriction P_\beta$ is defined$\}$ to be densely embedded in $P_\beta*\dot{P}_{\alpha/\beta}$.

\begin{definition}[The quotient forcing]
Let $\dot{P}_{\alpha/\alpha}$ be the $P_\alpha$-name of the trivial forcing $(\{\emptyset\},\leq, \leq^*)$. In $V^{P_{\alpha}}$, let $\dot{C}_{\alpha/\alpha}$ be the $\dot{P}_{\alpha/\alpha}$-name of the empty set.
Now assume that $\beta<\alpha$.
Define $\dot{P}_{\alpha/\beta}$ as the following.
Let $G$ be $P_\beta$-generic. Define $P_\alpha[G]=\dot{P}_{\alpha/\beta}[G]$ as the forcing consisting of conditions of the form

$p=(\langle P_{\beta^\prime}[G],q^\prime\rangle) {}^\frown (\langle f_0 \rangle, \langle \dot{P}_{\dot{\beta}_0/\alpha_0}[G],\dot{q}_0\rangle)\cdots {}^\frown (\langle f_{n-1} \rangle, \langle \dot{P}_{\dot{\beta}_{n-1}/\alpha_{n-1}}[G],\dot{q}_{n-1}\rangle) {}^\frown \langle f \rangle$ where $n\geq 0$ and

\begin{enumerate}
    \item $\beta \leq \beta^\prime<\alpha$, so $P_{\beta^\prime}[G]$ was already defined by recursion, which is just $\dot{P}_{\dot{\beta^\prime}[G]/\beta}[G]$ and $\beta^\prime=\dot{\beta}^\prime[G]$. Furthermore, $q^\prime \in P_{\beta^\prime}[G]$.
    \item If $n>0$, then $\alpha_0<\cdots<\alpha_{n-1}$, and for $i<n$, 
    \begin{itemize}
        \item let $d_i=\dom(f_i)$, then $d_i$ is an $\alpha_i$-domain, $d_i \in V$.
        \item for $\zeta \in d_0$, $\Vdash_{P_{\beta^\prime}[G]}``f_0(\zeta)<\alpha_0"$, and if $i>0$, then for $\zeta \in d_i$, $\Vdash_{P_{\alpha_{i-1}}[G]*\dot{P}_{\dot{\beta}_{i-1}/\alpha_{i-1}}[G]}``f_i(\zeta)<\alpha_i"$.
        \item $\Vdash_{P_{\alpha_i}[G]}``\alpha_i\leq \dot{\beta}_i<\alpha_{i+1}"$, where $\alpha_n=\alpha$.
        \item $\Vdash_{P_{\alpha_i}[G]}``\dot{q}_i \in \dot{P}_{\dot{\beta}_i/\alpha_i}[G]"$.
    \end{itemize}
    \item $d:=\dom(f)$ is an $\alpha$-domain, and is in $V$.
    \item Fix $\zeta \in d$. If $n=0$, then $\Vdash_{P_{\beta^\prime}[G]}``f(\zeta)<\alpha"$, otherwise, $\Vdash_{P_{\alpha_{n-1}}[G]*P_{\dot{\beta}_{n-1}/\alpha_{n-1}}[G]} ``f(\zeta)<\alpha"$.
\end{enumerate}
Back in $V$. If $\dot{p}$ is a $P_\beta$-name of a condition in $ \dot{P}_{\alpha/\beta}$, then by density, there is $p_0 \in P_\beta$ such that $p_0$ decides $n$, $\alpha_0, \cdots, \alpha_{n-1}$, $\dom(f_0), \cdots, \dom(f_{n-1})$, $\dom(f)$. In this case, we say that $p_0$ {\em interprets} $\dot{p}$.
All in all, for such $p_0$ which interprets all the relevant components of $\dot{p}$, let $p_1$ be such the interpretation.
Write $p_0$ as $r_0 {}^\frown \langle g \rangle$ and by the interpretation, we write $$p_1=(\langle \dot{P}_{\beta^\prime/\beta},\dot{q}^\prime \rangle) {}^\frown (\langle f_0 \rangle, \langle \dot{P}_{\dot{\beta}_0/\alpha_0},\dot{q}_0\rangle)\cdots {}^\frown (\langle f_{n-1} \rangle, \langle \dot{P}_{\dot{\beta}_{n-1}/\alpha_{n-1}},\dot{q}_{n-1}\rangle) {}^\frown \langle f \rangle.$$
There is a natural concatenation $p_0$ with $p_1$, written by $p_0{}^\frown p_1$, which is 
$$r=r_0 {}^\frown (\langle g \rangle, \langle \dot{P}_{\beta^\prime/\beta},\dot{q}^\prime \rangle) {}^\frown \cdots {}^\frown (\langle f_{n-1} \rangle, \langle \dot{P}_{\dot{\beta}_{n-1}/\alpha_{n-1}},\dot{q}_{n-1}\rangle) {}^\frown \langle f \rangle.$$
Then $r \in P_\alpha$ with $r \restriction P_\beta=p_0$ exists.
For $p_0$ and $p_1$ in $\dot{P}_{\alpha/\beta}$, we say that $p_0 \leq p_1$ if there is $p \in G^{P_\beta}$ such that $p$ interprets $p_0$ and $p_1$, and $p {}^\frown p_0 \leq_\alpha p {}^\frown p_1$. Also define $p_0 \leq^* p_1$ if there is $p \in G^{P_\beta}$ such that $p$ interprets $p_0$ and $p_1$, and $p {}^\frown p_0 \leq^*_\alpha p{}^\frown p_1$ (note that at this level $\leq^*$ and $\leq$ are still the same). One can check that the map $\phi: \{p \in P_\alpha \mid p \restriction P_\beta$ exists$\} \to P_\beta* \dot{P}_{\alpha/\beta}$ defined by
$\phi(p)=(p \restriction P_\beta, p \setminus P_\beta)$
is a dense embedding, where $p \setminus P_\beta$ is the obvious component of $p$ which is in $\dot{P}_{\alpha/\beta}$.

In $V^{P_\beta}$, let $\dot{C}_{\alpha/\beta}$ be a $\dot{P}_{\dot{\beta}/\alpha}$-name of the set described as the following. Let $G$ be $P_\beta$-generic.
Write 
$$p=(\langle P_{\beta^\prime}[G],q^\prime) {}^\frown (\langle f_0 \rangle, \langle \dot{P}_{\dot{\beta}_0/\alpha_0}[G],\dot{q}_0\rangle)\cdots {}^\frown (\langle f_{n-1} \rangle, \langle \dot{P}_{\dot{\beta}_{n-1}/\alpha_{n-1}}[G],\dot{q}_{n-1}\rangle) {}^\frown \langle f \rangle$$
as an element in $P_\alpha[G]$.
The part which excludes the top part, i.e.
$$(\langle P_{\beta^\prime}[G],q^\prime) {}^\frown (\langle f_0 \rangle, \langle \dot{P}_{\dot{\beta}_0/\alpha_0}[G],\dot{q}_0\rangle)\cdots {}^\frown (\langle f_{n-1} \rangle, \langle \dot{P}_{\dot{\beta}_{n-1}/\alpha_{n-1}}[G],\dot{q}_{n-1}\rangle)$$
is in $P_{\alpha_{n-1}}[G]*\dot{P}_{\dot{\beta}_{n-1}/\alpha_{n-1}}[G]$. Let $H$ be generic over the forcing.
By our induction scheme, $H$ produces $C_0 \sqcup C_1$, where $C_0 \subseteq (\beta,\alpha_{n-1}]$ (can be empty if $n=0$), and $C_1 \subseteq (\alpha_{n-1},\beta_{n-1}]$ (can be empty if $\beta_{n-1}$, the interpretation of $\dot{\beta}_{n-1}$, is $\alpha_{n-1}$).
If $n>0$, then $\max(C_0)=\alpha_{n-1}$, and if $\beta_{n-1}>\alpha_{n-1}$, then $\max(C_1)=\beta_{n-1}$.
Let $C_{\alpha/\beta}=C_0 \cup C_1 \cup \{\alpha\}$.

\end{definition}

\begin{proposition}
$P_\alpha$ and the relevant quotients at $\alpha$ satisfy Proposition \ref{indsch}.
\end{proposition}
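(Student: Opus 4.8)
The plan is to verify Proposition~\ref{indsch} for $P_\alpha$ and its quotients by induction on the inaccessible $\alpha$ below the first $\alpha^*$ with $\circ(\alpha^*)=1$, assuming the scheme holds at all smaller inaccessibles; clauses~(\ref{4}) and~(\ref{7}) are immediate from the definitions. Throughout this range $\circ(\gamma)=0$ for every inaccessible $\gamma\le\alpha$, so $\le$ and $\le^*$ coincide at all these levels and for all the quotients involved; hence the Prikry-property parts of~(\ref{1}) and~(\ref{5}) are vacuous and every ``$\le^*$-closed'' assertion is a $\le$-closure statement. For~(\ref{1}): the pure conditions form a copy of $C(\alpha^+,\alpha^{++})\cong\Add(\alpha^+,\alpha^{++})$, of size $\alpha^{++}$; an impure condition has finitely many blocks, and, by the inductive hypothesis, each lower $P_{\alpha_i}$ and each quotient $\dot P_{\dot\beta_i/\alpha_i}$ has size $<\alpha$ and is $<\alpha$-c.c., so a nice-name count (using GCH and the inaccessibility of the $\alpha_i$) gives $<\alpha$ possibilities per shape for the block data $\langle f_i,\dot\beta_i,\dot q_i:i<n\rangle$ below the top, while the top Cohen function $f$ contributes $\alpha^{++}$; hence $|P_\alpha|=\alpha^{++}$. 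The $\alpha^{++}$-chain condition is proved exactly as in Proposition~\ref{lis}: given $\alpha^{++}$ conditions, thin to a common shape $(n,\alpha_0,\dots,\alpha_{n-1})$ (only $\alpha$ shapes, and distinct shapes are incompatible), then to common block data below the top, then apply the $\Delta$-system lemma to the top Cohen functions and thin once more so they agree on the root; two survivors then have their literal union as a common extension.

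For~(\ref{2}) and~(\ref{3}) the organizing fact is that, below any condition in the generic $G$, $P_\alpha$ factors as a ``small'' forcing followed by the top Cohen forcing: if $G$ contains an impure $p$ with top block at $\alpha_{n-1}$ and $\dot\beta_{n-1}$ interpreted as $\beta_{n-1}$, then, using the dense embedding of inductive-hypothesis clause~(\ref{5}) to identify $P_{\alpha_{n-1}}*\dot P_{\dot\beta_{n-1}/\alpha_{n-1}}$ (below $p\restriction n$) with $P_{\beta_{n-1}}$, one gets $V[G]=V[G_0][g]$ with $G_0$ generic for a forcing of size $\beta_{n-1}^{++}<\alpha$ which by the inductive hypothesis preserves all cardinals, cofinalities and GCH, and $g$ generic for $\Add(\alpha^+,\alpha^{++})$ over $V[G_0]$ (the pure case being the degenerate one, with no $G_0$). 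Unwinding the definition of $\dot C_\alpha$ gives $C_\alpha=\{\alpha\}$ in the pure case and $C_\alpha=C_{\alpha_{n-1}}\sqcup C_{\beta_{n-1}/\alpha_{n-1}}\cup\{\alpha\}$ otherwise; one checks, strengthening the ``bounded'' clause of~(\ref{2}) for this range, that $C_\alpha$ is in fact \emph{finite} (by induction, since each $C_{\beta_i/\alpha_i}$ is a subset of the finite set $C_{\beta_i}$) and consists of inaccessibles with maximum $\alpha$, so~(\ref{2}) holds with its $\circ(\alpha)=0$ alternative and $\lim(C_\alpha)=\emptyset$. Then~(\ref{3}): since $\Add(\alpha^+,\alpha^{++})$ is $\alpha^+$-closed it adds no new $<\alpha$-sequences, so $\alpha$ stays regular, $\alpha^+$ and $\alpha^{++}$ are preserved, no cardinal $\le\alpha$ is collapsed or singularized, and $2^\beta=\beta^+$ for all $\beta\le\alpha$ --- which is exactly~(\ref{3}) with $\lim(C_\alpha)=\emptyset$.

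For~(\ref{5}) proper: one verifies that the map $\phi$ of the definition is a dense embedding --- monotonicity is built in, and density of the range reduces to density in $P_\beta$ of the conditions interpreting a given $P_\beta$-name (deciding $n$, the $\alpha_i$'s and the relevant domains) --- and the size and chain-condition statements for $\dot P_{\alpha/\beta}$ follow as in~(\ref{1}) but inside $V^{P_\beta}$, using $|P_\beta|^V=\beta^{++}<\alpha$. For the $\le^*$-closure: a quotient condition is a finite concatenation of a bottom piece in $P_{\beta'}[G]=\dot P_{\beta'/\beta}[G]$ (which is $\beta^*$-closed by the inductive hypothesis, $\beta^*$ the least inaccessible above $\beta$), finitely many blocks (each the product of the $\alpha_i^+$-closed $C(\alpha_i^+,\alpha_i^{++})$ with the $\alpha_i^*$-closed quotient $\dot P_{\dot\beta_i/\alpha_i}$), and a top Cohen piece (which is $\alpha^+$-closed); all the closure degrees are $\ge\beta^*$, and a finite iteration of $\beta^*$-closed forcings is $\beta^*$-closed, so a $\le^*$-decreasing sequence of length $<\beta^*$ --- of fixed shape, fixed $\beta'$, and fixed $\dot\beta_i$'s --- has a componentwise lower bound. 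For~(\ref{6}): via $\phi$, a $P_\alpha$-generic $I$ through a condition with $p\restriction P_\beta$ defined corresponds to $G*H$ with $G=I\restriction P_\beta$, and one traces the definitions of $\dot C_\alpha$, $\dot C_\beta$ and $\dot C_{\alpha/\beta}$: the computation of $C_\alpha$ splits, along $\mathrm{range}(\phi)$, into that of $C_\beta$ from $G$ and that of $C_{\alpha/\beta}$ from $H$ (whose bottom datum $(\beta',q')$ absorbs exactly the part of the condition above $\beta$), yielding $C_\alpha=C_\beta\sqcup C_{\alpha/\beta}$ with $C_{\alpha/\beta}\subseteq(\beta,\alpha]$.

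The main obstacle is bookkeeping rather than ideas: correctly setting up $\phi$ together with the interpretation of $P_\beta$-names of quotient conditions (for~(\ref{5})), and then checking the equality $C_\alpha=C_\beta\sqcup C_{\alpha/\beta}$ uniformly across the possible positions of the break point $\beta$ relative to the blocks $\alpha_0<\dots<\alpha_{n-1}$ of a condition (whether $\beta<\alpha_0$, $\beta=\alpha_j$, or $\alpha_j<\beta<\alpha_{j+1}$), reading off the bottom datum $\beta'$ correctly in each case. Everything else reduces to GCH cardinal arithmetic, the $\Delta$-system lemma, the $\alpha^+$-closure of the top Cohen forcing, and the inductive hypothesis; the genuinely new difficulties of the construction --- the Cohen parts of the extender-based forcings and the substantial use of names --- appear only at higher levels.
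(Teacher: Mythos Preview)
Your proposal is correct and follows essentially the same approach as the paper: both verify the clauses of Proposition~\ref{indsch} by reducing to the $\Delta$-system argument of Proposition~\ref{lis} for the chain condition, the factoring $P_\alpha\cong Q*\dot C(\alpha^+,\alpha^{++})$ with $Q\in V_\alpha$ for clauses~(\ref{2}) and~(\ref{3}) (including the observation that $C_\alpha$ is finite at this level), the explicit dense embedding $p\mapsto(p\restriction P_\beta,\,p\setminus P_\beta)$ for~(\ref{5}), and the coincidence $\leq=\leq^*$ for the Prikry property. The only stylistic difference is in the $\beta^*$-closure of $\dot P_{\alpha/\beta}$: the paper splits a quotient condition as $p_0^\gamma{}^\frown\langle f^\gamma\rangle$ and applies the inductive closure hypothesis once to the lower part, whereas you unfold the condition fully into a finite iteration of $\beta^*$-closed pieces; these are equivalent, and your version makes the role of the inductive hypothesis at each block more explicit.
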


\begin{proof}
\begin{enumerate}
    \item Similar as the proof of the corresponding properties in Propoisition \ref{lis}.
    \item $\circ(\alpha)=0$. Then the forcing $P_\alpha$ introduces the set $C_\alpha \subseteq \alpha+1$ where $C_\alpha \setminus \{\alpha\}$ is a bounded subset of $\alpha$. By induction hypothesis, it is easy to see that $C_\alpha$ contains only inaccessible cardinals in $V$.
    
    \item The forcing $P_\alpha$ under a certain condition can be factored to $Q*\dot{C}(\alpha^+,\alpha^{++})$, where $Q \in V_\alpha$, and hence, $\alpha$ is still regular. Note that since $\alpha$ is below the first measurable cardinal, we can still induct to show that $C_\alpha$ is finite. Since $Q$ is either empty or a two-step iteration where it forces GCH. Hence, $P_\alpha$ still forces GCH.

    \item Obvious.

    \item Let $\beta<\alpha$.
    \begin{itemize}
    \item The map $p \mapsto (p \restriction P_\beta, p \setminus P_\beta)$ is a dense embedding from $\{p \in P_\alpha \mid p \restriction P_\beta$ exists$\}$ to $P_\beta*\dot{P}_{\alpha/\beta}$.

    \item Similar to the proof of the corresponding properties in Proposition \ref{indsch}, $\Vdash_\beta ``|\dot{P}_{\alpha/\beta}|=\alpha^{++}$ and is $\alpha^{++}$-c.c.$"$

    \item Let $\beta^\prime<\beta^*$ and $\Vdash_\beta ``\{p^\gamma \mid \gamma<\beta^\prime\}$ be a $\leq^*$-decreasing sequence of conditions in $\dot{P}_{\alpha/\beta}"$. We may assume that $p^\gamma=p_0^\gamma {}^\frown \langle f^\gamma \rangle$.
    Then $\Vdash_\beta ``\{p_0^\gamma \mid \gamma<\beta^\prime\}$ is a $\leq^*$-decreasing sequence in a certain forcing $P_{\alpha^*}*\dot{P}_{\dot{\beta^*}/\alpha^*}"$. By induction hypothesis, the two-step iteration is $\beta^*$-closed under $\leq^*$. Let $p_0^*$ be such that for all $\gamma$, $\Vdash_\beta ``p_0^* \leq^* p_0^\gamma"$. Now a similar proof as in the corresponding property of Proposition \ref{indsch} can be used to find $f_1^*$ such that for all $\gamma$, $\Vdash_\beta ``p_0^* {}^\frown \langle f_1^* \rangle \leq^* p_0^\gamma {}^\frown \langle f_1^\gamma \rangle"$.

    \item Since $\leq$ and $\leq^*$ on $\dot{P}_{\alpha/\beta}$ coincide, the Prikry property holds.
    \end{itemize}
    \item By the construction of $\dot{C}_{\alpha/\beta}$ and the factorization, the property holds.
    \item Obvious by the definition of the double quotient stated in the Proposition \ref{indsch}.
\end{enumerate}    
\end{proof}

\section{At the first $\alpha$ with $\circ(\alpha)=1$}
\label{atfir}
We exhibit the forcing at the level of the first cardinal with a positive Mitchell order. Let $\alpha$ be the first such that $\circ(\alpha)=1$. A variation of the Extender-based Prikry forcing will be introduced. Instead of diving into a full definition all at once, we progress through a series of definitions. We make use of the digression we did in Section \ref{bas}.

\begin{definition}
    A {\em pure condition} of $P_\alpha$ is $p=\langle f_0, \vec{f}, A, F \rangle$ where there is a {\em common domain $d$} such that

\begin{enumerate}
    \item $A$ is a $d$-tree.
    \item $\dom(F)=A(\alpha)$.
    \item for $\nu \in \dom(F)$, $F(\nu)=\langle \dot{P}_{\dot{\beta}_\nu/\nu},\dot{q}\rangle$ where $\Vdash_\nu ``\nu \leq \dot{\beta}_\nu<\alpha \text{ and } \dot{q} \in \dot{P}_{\dot{\beta}_\nu/\nu}".$ We often represent $F(\nu)$ as $\langle F(\nu)_0,F(\nu)_1 \rangle$.
    \item $\dom(f)=d$ and $f_0 \in C(\alpha^+,\alpha^{++})$.
    \item $\vec{f}=\langle f_\nu \mid \nu \in A(\alpha) \rangle$.
    \item for each $\nu \in A(\alpha)$, $\dom(f_\nu)=d$ and for $\zeta \in d$, $f(_\nu(\zeta)$ is a $P_\nu*\dot{P}_{\dot{\beta}_\nu/\nu}$-name and $\Vdash_{P_\nu*\dot{P}_{\dot{\beta}_\nu/\nu}} ``f_\nu(\zeta)<\alpha"$. 
\end{enumerate}

\end{definition}

The forcing looks different from a usual Extender-based forcing. The main difference is that now we have a sequence of Cohen-like functions. The role of the sequence of the Cohen-like functions is that we want the closure of the quotient forcings at this level (and also in general) to be high with respect to the direct extension relation. If we just use a Cohen function in the ground model, then the corresponding quotient will no longer be highly closed with respects to the direct extension relation. When we perform a one-step extension, we want to somehow change the Cohen function to a name, where the name of a Cohen function  respects to the forcing in which the lower part lives. The explanation will make a bit more sense once we introduce the one-step extension operation.

We now discuss a one-step extension of a pure condition. Suppose that $p=\langle f_0,\vec{f},A,F \rangle$ with the common domain $d$.
Let $\langle \mu \rangle \in \Lev_0(A)$ with $\mu(\alpha)=\nu$. The {\em one-step extension of $p$ by $\mu$} is $r {}^\frown \langle g_0,\vec{g},A^\prime, F^\prime \rangle$ such that 
\begin{itemize}
    \item $r=(\langle f_0 \circ \mu^{-1} \rangle,F(\nu))$. Write $F(\nu)=\langle \dot{P}_{\dot{\beta}_\nu/\nu},\dot{q}\rangle$ (note that we assume $f_0 \circ \mu^{-1} \in C(\nu^+,\nu^{++})$ and the collection of such $\mu$ is large).
    \item $A^\prime=\{\vec{\tau} \in A_{\langle \mu \rangle} \mid \tau_0(\alpha)>\beta^*\}$ where $\beta^*=\sup\{\gamma \mid \exists r \in P_\nu(r\Vdash_\nu``\dot{\beta}_\nu=\gamma)"\}$.
    \item $F^\prime=F \restriction (A^\prime(\alpha))$.
    \item $\dom(g_0)=d$.
    \item $ \Vdash_{P_\nu*\dot{P}_{\dot{\beta}/\nu}} ``g_0=f_\nu \oplus \mu"$, i.e. for $\zeta \in d$, if $\zeta \in \dom(\mu)$, $g_0(\zeta)=\check{\mu(\alpha)}$, otherwise, $\Vdash_{P_\nu*\dot{P}_{\dot{\beta}/\nu}}``g_0(\zeta)=f_\nu(\zeta)"$ (we can assume tat $g_0(\zeta)=f_\nu(\zeta)$ for $\zeta \in d \setminus \dom(\mu))$.
    \item $\vec{g}=\langle f_{\nu^\prime} \mid \nu^\prime \in A^\prime(\alpha) \rangle$.

\end{itemize}

 Note that particular, $\langle f_0\circ \mu^{-1} \rangle \in P_\nu$, and so, $r$ can be considered as a condition in $P_\nu * \dot{P}_{\dot{\beta}_\nu/\nu}$. Like in a lot of Pirkry-type forcings, a $d$-tree at $\alpha$ gives us objects to create new blocks below $\alpha$. The part $\langle g_0,\vec{g},A^\prime,F^\prime \rangle$ looks similar to a pure condition except that for each $\zeta$, we now have that each $g_0(\zeta)$ is a name with respects to the forcing corresponding to where $r$ lives.

We now define a condition in a general form.

\begin{definition}

A condition in $P_\alpha$ is either pure or {\em impure}, which is of the form

$$p=(\langle f_0 \rangle {}^\frown \langle \dot{P}_{\dot{\beta_0}/\alpha_0},\dot{q}_0\rangle ) {}^\frown \cdots {}^\frown (\langle f_{n-1} \rangle {}^\frown \langle \dot{P}_{\dot{\beta}_{n-1}/\alpha_{n-1}},\dot{q}_{n-1}\rangle){}^\frown \langle g_0,\vec{g},A,F \rangle,$$
for some $n>0$, and a {\em common domain $d$} such that

\begin{enumerate}
    \item $(\langle f_0 \rangle {}^\frown \langle \dot{P}_{\dot{\beta_0}/\alpha_0},\dot{q}_0\rangle ) {}^\frown \cdots {}^\frown \langle f_{n-1} \rangle \in P_{\alpha_{n-1}}$, where $\alpha_{n-1}<\alpha$ (by the inductive construction, $\alpha_0<\cdots<\alpha_{n-1}$).
    \item $\Vdash_{\alpha_{n-1}} ``\alpha_{n-1} \leq \dot{\beta}_{n-1}<\alpha, \dot{q}_{n-1}\in \dot{P}_{\dot{\beta}_{n-1}/\alpha_{n-1}}"$.
    \item $d$ is an $\alpha$-domain (we emphasize that $d \in V$).
\item $A$ is a $d$-tree, $\min(A(\alpha))>\beta^*$, where $\beta^*=\sup\{\gamma \mid \exists r \in P_{\alpha_{n-1}}(r \Vdash \dot{\beta}_{n-1}=\gamma)\}$.
\item $\dom(F)=A(\alpha)$, and for each $\nu \in A(\alpha)$, $F(\nu)=\langle \dot{P}_{\dot{\beta}_\nu/\nu},\dot{q}\rangle$, where $\Vdash_{\nu} ``\nu \leq \dot{\beta}_\nu<\alpha \text{ and } \dot{q} \in \dot{P}_{\dot{\beta}_\nu/\nu}"$.
\item $\vec{g}=\{g_{\nu^\prime} \mid \nu^\prime \in A(\alpha)\}$.
\item $\dom(g_0)=d$ and for all $\nu^\prime$, $\dom(g_{\nu^\prime})=d$.
\item For $\zeta \in d$, $\Vdash_{P_{\alpha_{n-1}}*\dot{P}_{\dot{\beta}_{n-1}/\alpha_{n-1}}}``g_0(\zeta)<\alpha"$, and for all $\nu^\prime$, $\Vdash_{P_{\nu^\prime}*\dot{P}_{\dot{\beta}_{\nu^\prime}/\nu^\prime}}``g_{\nu^\prime}(\zeta)<\alpha"$.

\end{enumerate}

We write $p \restriction P_{\alpha_i}=(\langle f_0 \rangle {}^\frown \langle \dot{P}_{\dot{\beta_0}/\alpha_0},\dot{q}_0\rangle ) {}^\frown \cdots {}^\frown \langle f_i \rangle$, so $p \restriction P_{\alpha_i}\in P_{\alpha_i}$. Also write $p \restriction i=(\langle f_0 \rangle {}^\frown \langle \dot{P}_{\dot{\beta_0}/\alpha_0},\dot{q}_0\rangle ) {}^\frown \cdots {}^\frown (\langle f_i \rangle {}^\frown \langle \dot{P}_{\dot{\beta}_i/\alpha_i},\dot{q}_i\rangle)$, and we consider $p \restriction i$ as a condition in $P_{\alpha_i}*\dot{P}_{\dot{\beta}_i/\alpha_i}$. We put the superscript $p$ to every component, including the common domain, i.e. we write $d^p$ for $d$. We call $\dot{q}_i$'s the {\em interleaving part of $p$}.
With $p$ as above, we write $\tp(p)=\langle g_0,\vec{g},A,F \rangle$, $\stem(p)=p \setminus \tp(p)$ and say that $\stem(p)$ has $n$ blocks and write $n^p=n$. From the definition, it is straightforward to check that $|P_\alpha|=\alpha^{++}$.

\end{definition}

\begin{definition}[The one-step extension]
Let $$p=(\langle f_0 \rangle {}^\frown \langle \dot{P}_{\dot{\beta_0}/\alpha_0},\dot{q}_0\rangle ) {}^\frown \cdots {}^\frown (\langle f_{n-1} \rangle {}^\frown \langle \dot{P}_{\dot{\beta}_{n-1}/\alpha_{n-1}},\dot{q}_{n-1}\rangle){}^\frown \langle g_0,\vec{g},A,F \rangle,$$ with its common domain $d$, and $\langle \mu \rangle \in \Lev_0(A)$. Say $\nu=\mu(\alpha)$. The {\em one-step extension of $p$ by $\mu$}, denoted by $p+\langle \mu \rangle$, is the condition 
$$p^\prime=(\langle f_0 \rangle {}^\frown \langle \dot{P}_{\dot{\beta_0}/\alpha_0},\dot{q}_0\rangle ) {}^\frown \cdots {}^\frown (\langle f_{n-1} \rangle {}^\frown \langle \dot{P}_{\dot{\beta}_{n-1}/\alpha_{n-1}},\dot{q}_{n-1}\rangle){}^\frown r_0 {}^\frown r_1,$$
where
\begin{enumerate}
    \item $r_0=(g_0 \circ \mu^{-1},F(\nu))$,
    \begin{itemize}
        \item $g_0 \circ \mu^{-1}$ has domain $\rng(\mu)$.
        \item for $\zeta \in  \dom(\mu)$, $(g_0 \circ \mu^{-1})(\mu(\zeta))=g_0(\zeta)$.
        \item Write $F(\nu)=\langle \dot{P}_{\dot{\beta}_\nu/\nu}/\dot{q}\rangle$.
    \end{itemize}
    \item $r_1=\langle h_0^\prime,\vec{h}^\prime,A^\prime,F^\prime \rangle$,
    \begin{itemize}

    \item $A^\prime=\{\vec{\tau} \in A_{\langle \mu \rangle}\mid \tau_0(\alpha)>\beta^*\}$, where $\beta^*=\sup\{\gamma \mid \exists r \in P_\nu (r \Vdash_\nu ``\dot{\beta}_\nu=\gamma")\}$.
    \item $F^\prime=F \restriction A^\prime(\alpha)$.
    \item $\vec{h}=\{g_{\nu^\prime} \mid \nu^\prime \in A^\prime(\alpha)\}$.
    \item $\dom(h_0)=d$, and for all $\nu^\prime$, $\dom(h_{\nu^\prime})=d$.
    \item $ \Vdash_{P_\nu*\dot{P}_{\dot{\beta}/\nu}} ``h_0=g_\nu \oplus \mu"$, i.e. for $\zeta \in d$, if $\zeta \in \dom(\mu)$, $h_0(\zeta)=\check{\mu(\alpha)}$, otherwise, $\Vdash_{P_\nu*\dot{P}_{\dot{\beta}/\nu}} ``h_0(\zeta)=g_\nu(\zeta)"$ (we may assume that for $\zeta \in d \setminus \dom(\mu)$, $h_0(\zeta)=g_\nu(\zeta)$).
    \item for $\nu^\prime \in A^\prime(\alpha)$, $h_{\nu^\prime}=g_{\nu^\prime}$
    \end{itemize}
\end{enumerate}
    
\end{definition}

We define $p+\langle \rangle$ as $p$, and by recursion, define $p+\langle \mu_0, \cdots, \mu_n \rangle=(p+\langle \mu_0, \cdots, \mu_{n-1} \rangle)+\langle \mu_n \rangle$.

\begin{definition}[The direct extension relation]

Let $$p=(\langle f_0 \rangle {}^\frown \langle \dot{P}_{\dot{\beta_0}/\alpha_0},\dot{q}_0\rangle ) {}^\frown \cdots {}^\frown (\langle f_{n-1} \rangle {}^\frown \langle \dot{P}_{\dot{\beta}_{n-1}/\alpha_{n-1}},\dot{q}_{n-1}\rangle){}^\frown \langle g_0,\vec{g},A,F \rangle,$$
and
$$p^\prime=(\langle h_0 \rangle {}^\frown \langle \dot{P}_{\dot{\xi_0}/\gamma_0},\dot{r}_0\rangle ) {}^\frown \cdots {}^\frown (\langle h_{m-1} \rangle {}^\frown \langle \dot{P}_{\dot{\xi}_{m-1}/\gamma_{m-1}},\dot{r}_{m-1}\rangle){}^\frown \langle t_0,\vec{t},A^\prime,F^\prime \rangle.$$
We say that $p$ is a {\em direct extension of} $p^\prime$, denoted by $p \leq^*_\alpha p^\prime$, if the following hold.

\begin{enumerate}
    \item $n=m$.
    \item for $i<n$, $\alpha_i=\gamma_i$.
    \item $p \restriction n \leq^* p^\prime \restriction n$ in $P_{\alpha_{n-1}}* \dot{P}_{\dot{\beta}_{n-1}/\alpha_{n-1}}$, i.e. 
    \begin{itemize}
    \item $f_0 \supseteq h_0$.
    \item for $i \leq n$, $p \restriction P_{\alpha_i} \Vdash_{\alpha_i}`` \dot{\beta}_i=\dot{\xi}_i$ and $\dot{q}_i \leq^*_{\dot{P}_{\dot{\beta}_i/\alpha_i}} \dot{r}_i"$ (we can take $\dot{\beta}_i=\dot{\xi}_i$).
    \item for $i \in (0,n)$, $\dom(f_i) \supseteq \dom(h_i)$, and for $\zeta \in \dom(h_i)$, $p \restriction i \Vdash_{P_{\alpha_i}*\dot{P}_{\dot{\beta}_i/\alpha_i}} ``f_i(\zeta)=h_i(\zeta)"$.
    \end{itemize}
    \item $d^p \supseteq d^{p^\prime}$.
    \item $A \restriction d^{p^\prime} \subseteq A^\prime$.
    \item for every $\nu \in A(\alpha)$  and $\vec{\mu} \in A$ with $\vec{\mu}(\alpha)=\nu$,
    $$p+\vec{\mu} \restriction P_\nu \Vdash_\nu ``F(\nu)_0=F^\prime(\nu)_0 \text{ and } F(\nu)_1 \leq^*_{F(\nu)_0} F^\prime(\nu)_1".$$
    \item For $\zeta \in d^{p^\prime}$,
    \begin{itemize}
        \item $p \restriction n \Vdash_{P_{\alpha_{n-1}}*\dot{P}_{\dot{\beta}_{n-1}/\alpha_{n-1}}} ``g_0(\zeta)=t_0(\zeta)"$.
        \item for $\nu \in A(\alpha)$, write $F(\nu)=(\dot{P}_{\dot{\beta}_\nu/\nu},\dot{q})$, and every $\vec{\mu}$ with $\vec{\mu}(\alpha)=\nu$, we have
        $$p+\vec{\mu} \restriction (n+|\vec{\mu}|) \Vdash_{P_\nu* \dot{P}_{\dot{\beta}_\nu/\nu}}``g_\nu(\zeta)=t_\nu(\zeta)".$$
    \end{itemize}
\end{enumerate}

\end{definition}

\begin{definition}[The extension relation]
\label{extrel}
Let $$p=(\langle f_0 \rangle {}^\frown \langle \dot{P}_{\dot{\beta_0}/\alpha_0},\dot{q}_0\rangle ) {}^\frown \cdots {}^\frown (\langle f_{n-1} \rangle {}^\frown \langle \dot{P}_{\dot{\beta}_{n-1}/\alpha_{n-1}},\dot{q}_{n-1}\rangle){}^\frown \langle g_0,\vec{g},A,F \rangle,$$
and $p^\prime \in P_\alpha$.
We say that $p$ is an {\em extension of} $p^\prime$, denoted by $p \leq_\alpha p^\prime$, if there is $\vec{\mu} \in A^{p^\prime}$, or $\vec{\mu}=\langle \rangle$, such that by letting $p^*=p^\prime+\vec{\mu}$ and write
$$p^*=(\langle h_0 \rangle {}^\frown \langle \dot{P}_{\dot{\xi_0}/\gamma_0},\dot{r}_0\rangle ) {}^\frown \cdots {}^\frown (\langle h_{m-1} \rangle {}^\frown \langle \dot{P}_{\dot{\xi}_{m-1}/\gamma_{m-1}},\dot{r}_{m-1}\rangle){}^\frown \langle t_0,\vec{t},A^\prime,F^\prime \rangle,$$
we then have that
\begin{enumerate}

\item $p \restriction n \leq p^* \restriction m$ in $P_{\alpha_{n-1}}*\dot{P}_{{\beta}_{n-1}/\alpha_{n-1}}$, namely,
\begin{itemize}
    \item $\alpha_{n-1}=\gamma_{m-1}$.
    \item $p \restriction P_{\alpha_{n-1}} \leq_{\alpha_{n-1}} p^* \restriction P_{\alpha_{n-1}}$.
    \item $p \restriction P_{\alpha_{n-1}} \Vdash_{\alpha_{n-1}} ``\dot{\beta}_{n-1}=\dot{\gamma}_{m-1}$ and $\dot{q} \leq_{\dot{P}_{\dot{\beta}_{n-1}/\alpha_{n-1}}} \dot{r}_{m-1}"$ (we can take $\dot{\beta}_{n-1}=\dot{\gamma}_{m-1})$.
    
\end{itemize}
\item $d^p \supseteq d^{p^*}$.
    \item $A \restriction d^{p^*} \subseteq A^\prime$.
    \item for every $\nu \in A(\alpha)$  and $\vec{\mu} \in A$ with $\vec{\mu}(\alpha)=\nu$,
    $$p+\vec{\mu} \restriction P_\nu \Vdash_\nu ``F(\nu)_0=F^\prime(\nu)_0 \text{ and } F(\nu)_1 \leq^*_{F(\nu)_0} F^\prime(\nu)_1".$$
    (the $\leq^*$ here is intentional).
    \item For $\zeta \in d^{p^*}$,
    \begin{itemize}
        \item $p \restriction n \Vdash_{P_{\alpha_{n-1}}*\dot{P}_{\dot{\beta}_{n-1}/\alpha_{n-1}}} ``g_0(\zeta)=t_0(\zeta)"$.
        \item for $\nu \in A(\alpha)$, write $F(\nu)=\langle \dot{P}_{\dot{\beta}/\nu},\dot{q}\rangle$, then 
        $$p+\vec{\mu} \restriction (n+|\vec{\mu}|) \Vdash_{P_\nu* \dot{P}_{\dot{\beta}/\nu}}``g_\nu(\zeta)=t_\nu(\zeta)".$$
    \end{itemize}
\end{enumerate}

\end{definition}

\begin{remark}
 In Definition \ref{extrel}, $n=m$. This is because $\alpha$ is the first cardinal with $\circ(\alpha)>0$.  
\end{remark}

Note that equivalently, $p \leq p^\prime$ if there is $\vec{\mu}$ such that $p$ is a condition obtained by extending the interleaving part of a direct extension of $p^\prime+\vec{\mu}$. For $p^\prime \leq p$, the {\em interpolant of $p^\prime$ and $p$} is $p^*$ such that there exist unique $\vec{\mu}$ such that $p^*=p+\vec{\mu}$ and $p^\prime$ is obtained by extending the interleaving part of the direct extension of $p^*$.

\begin{proposition}
$(P_\alpha,\leq)$ has the $\alpha^{++}$-chain condition.
\label{cc1}
\end{proposition}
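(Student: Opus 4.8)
The plan is to prove the $\alpha^{++}$-chain condition for $(P_\alpha,\leq)$ by a $\Delta$-system argument, exploiting that the combinatorially ``large'' part of any condition — the domains — can be assumed to lie in $V$ (as $\alpha$-domains), so there are only $\alpha^{++}$-many possibilities for the ``shape'' of a condition while the genuinely name-theoretic data is bounded.

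\begin{proof}
Suppose toward a contradiction that $\{p^\gamma \mid \gamma<\alpha^{++}\}$ is an antichain in $(P_\alpha,\leq)$. First I would pass to a subfamily of size $\alpha^{++}$ on which the conditions are homogeneous in their discrete data: all pure, or all impure with a fixed number of blocks $n$ and a fixed sequence $\alpha_0<\cdots<\alpha_{n-1}$ of inaccessibles below $\alpha$ — this is possible since $|P_{\alpha_{n-1}}|=\alpha_{n-1}^{++}<\alpha$ by the induction hypothesis (Proposition~\ref{indsch}(\ref{1})), so there are fewer than $\alpha$ choices for the stem below $\alpha_{n-1}$, and hence fewer than $\alpha^{++}$ choices altogether; by the induction hypothesis the part $p^\gamma\restriction n$ lives in $P_{\alpha_{n-1}}*\dot P_{\dot\beta_{n-1}/\alpha_{n-1}}$, a poset of size $\alpha_{n-1}^{++}<\alpha$, so we may assume $p^\gamma\restriction n$ is a fixed $p_0$ for all $\gamma$ in the subfamily. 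It then remains to handle the top part $\tp(p^\gamma)=\langle g_0^\gamma,\vec g^\gamma,A^\gamma,F^\gamma\rangle$, together with its common $\alpha$-domain $d^\gamma$.

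Next I would apply the $\Delta$-system lemma to the family of $\alpha$-domains $\{d^\gamma \mid \gamma<\alpha^{++}\}$: since each $d^\gamma\in[\alpha^{++}]^{\le\alpha}$ and $\alpha^{<\alpha}=\alpha$ (by GCH and inaccessibility), we get a subfamily of size $\alpha^{++}$ with $d^\gamma\cap d^\delta=\rho$ a fixed root for all $\gamma\ne\delta$. Thinning further, I would stabilize all the data on the root: the restriction $A^\gamma\restriction\rho$ (there are at most $2^\alpha=\alpha^+<\alpha^{++}$ such $d$-trees on a fixed domain once nice-name normalizations from Section~\ref{bas} are used, and each $F^\gamma$ and $g_0^\gamma\restriction\rho$, $\vec g^\gamma\restriction\rho$ are similarly determined by $\le\alpha$-much information with $\le\alpha^+$-many values), so that $A^\gamma\restriction\rho$, $F^\gamma$, and $g_0^\gamma\restriction\rho$ all agree across the subfamily; we also stabilize the measure-one generating set $B^\gamma$ restricted to $\rho$, using the normalizations $B^\gamma\subseteq\mathcal B_{d^\gamma}$. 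Now fix $\gamma\ne\delta$ in the final subfamily. The claim is that $p^\gamma$ and $p^\delta$ are compatible: one builds a common extension $q$ whose stem below $\alpha_{n-1}$ is $p_0$, whose common domain is $d^\gamma\cup d^\delta$ (still an $\alpha$-domain, being of size $\le\alpha$), whose tree $A^q$ is the pullback to $d^\gamma\cup d^\delta$ of $A^\gamma\restriction\rho=A^\delta\restriction\rho$ intersected appropriately (using that the two trees agree on the common root, so the generating sets $B^\gamma$ and $B^\delta$ can be amalgamated over $\rho$), and whose $g_0^q$, $\vec g^q$ extend both by taking unions of the coordinate functions — these unions are well-defined precisely because the domains overlap only in $\rho$ where all values already coincide. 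One then checks $q\le p^\gamma$ and $q\le p^\delta$ directly from Definition~\ref{extrel} with $\vec\mu=\langle\rangle$, contradicting that the $p^\gamma$ form an antichain.

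The main obstacle I anticipate is verifying that the trees $A^\gamma$ and $A^\delta$ can actually be amalgamated into a single $d^q$-tree that projects onto each — it is not merely a matter of intersecting measure-one sets, because the objects in $\OB_\alpha(d^\gamma)$ and $\OB_\alpha(d^\delta)$ have different domains, and one must use the projection maps $\pi_{d',d}$ and the coherence assumptions on $\vec E_\alpha(\cdot)$ from Section~\ref{bas} to see that the pullbacks of $A^\gamma\restriction\rho$ and $A^\delta\restriction\rho$ to $d^\gamma\cup d^\delta$ literally coincide, so that a single $B^q\subseteq\mathcal B_{d^q}$ with $B^q\restriction\rho\subseteq B^\gamma\restriction\rho=B^\delta\restriction\rho$ and projecting correctly to both $d^\gamma$ and $d^\delta$ exists. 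A secondary subtlety is bookkeeping the name-theoretic equalities ``$g_0^q(\zeta)=g_0^\gamma(\zeta)$'' etc.\ forced by the appropriate iterations; but since the coordinate functions on $d^\gamma\setminus\rho$ and $d^\delta\setminus\rho$ are untouched by the amalgamation, these reduce to the trivial observation that $q$ copies the relevant values verbatim, and the only real content is in the overlap, which was arranged to be consistent.
\end{proof}
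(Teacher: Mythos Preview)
Your proposal is correct and follows essentially the same $\Delta$-system approach as the paper: stabilize the stem, take a $\Delta$-system on the common domains, fix the data on the root, and amalgamate. The paper's version is slightly leaner --- it stabilizes only the set $b=A^\gamma(\alpha)$ rather than the full restricted tree $A^\gamma\restriction\rho$, and the tree-amalgamation obstacle you flag dissolves immediately, since the intersection of the two pullbacks to $d^\gamma\cup d^\delta$ is automatically a $(d^\gamma\cup d^\delta)$-tree projecting into both $A^\gamma$ and $A^\delta$.
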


\begin{proof}
Let $\{ p^\gamma \mid \gamma<\alpha^{++}\}$ be a collection of conditions in $P_\alpha$. $p_\gamma$ can be written as $p^\gamma_0 {}^\frown \langle f^\gamma_0,\vec{f}^\gamma,A^\gamma,F^\gamma \rangle$, with the corresponding common domain $d^\gamma$.
By shrinking the collection, we may assume that there are $p_0,d,b$ such that for all $\gamma$, $p^\gamma_0=p_0$, $b=A^\gamma(\alpha)$, and $d$ is the root of the $\Delta$-system $\{d^\gamma \mid \gamma<\alpha^{++}\}$.  Since for each $\gamma<\alpha^{++}$, $\zeta \in d$, and $\nu \in b$, $f^\gamma_0(\zeta)$, $f^\gamma_\nu(\zeta) \in V_\alpha$, and $F^\gamma(\nu) \in V_\alpha$, we can shrink the collection of conditions further so that there are $x_{\zeta,0}, x_{\zeta,\nu},y_\nu$, such that for all $\gamma<\alpha^{++}$, $f^\gamma_0(\zeta)=x_{\zeta,0}$, $f^\gamma_\nu(\zeta)=x_{\zeta,\nu}$, and $F^\gamma(\nu)=y_\nu$. Then any two conditions in the shrunk collection are compatible.

\end{proof}

\begin{proposition}
$(\{p \in P_\alpha \mid p \text{ is pure}\},\leq^*)$ is $\alpha$-closed.
\label{closure1}
\end{proposition}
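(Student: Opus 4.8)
The plan is to show that a $\leq^*$-decreasing sequence $\langle p^\gamma \mid \gamma < \delta \rangle$ of pure conditions, with $\delta < \alpha$, has a lower bound. Write $p^\gamma = \langle f_0^\gamma, \vec f^\gamma, A^\gamma, F^\gamma \rangle$ with common domain $d^\gamma$. First I would assemble the data that must be stabilized along a direct extension: the common domains increase, so put $d = \bigcup_{\gamma < \delta} d^\gamma$; since $|d^\gamma| \le \alpha$ and $\delta < \alpha$ with $\alpha$ inaccessible, $d$ is again an $\alpha$-domain, and $d \in V$. Similarly, by clause (5) of the direct extension relation we have $A^\gamma \restriction d^{\gamma'} \subseteq A^{\gamma'}$ for $\gamma > \gamma'$, so the trees are coherent under restriction; the natural candidate for the limit tree is the pullback to $d$ of the "eventual" tree, i.e. for each level the measure-one sets shrink, and I would take a diagonal/pointwise intersection. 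Concretely, work level by level: $\Lev_0(A^\gamma) \restriction d^{\gamma'} \subseteq \Lev_0(A^{\gamma'})$, these live in $\vec E_\alpha$-measures which are $<\!\alpha$-complete on each coordinate (more precisely $E_{\alpha,\beta}(d)$ concentrates on $\mathcal B_d$ and is a $\kappa$-complete-style measure), so $\bigcap_{\gamma} (\text{pullback of }\Lev_0(A^\gamma)\text{ to }d)$ is still in $\vec E_\alpha(d)$ since $\delta < \alpha$; iterate through successor levels using the generating set formulation (every tree is generated by some $B \in \vec E_\alpha(d)$, so it suffices to intersect the generating sets $B^\gamma$ pulled back to $d$, getting $B^* = \bigcap_\gamma (B^\gamma\text{-pullback})$, and let $A^*$ be generated by $B^*$).

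Next I would handle the Cohen functions $f_0^\gamma$ and the sequence $\vec f^\gamma = \langle f_\nu^\gamma \mid \nu \in A^\gamma(\alpha)\rangle$. For the top Cohen part: by clause (7) of the direct extension relation, $f_0^\gamma(\zeta)$ is forced equal for all $\gamma$ once $\zeta$ is in the smaller domain, but the domains grow, so I would let $f_0^*$ have domain $d$ and set $f_0^*(\zeta) = f_0^\gamma(\zeta)$ for the least $\gamma$ with $\zeta \in d^\gamma$ — this is well-defined because $\leq^*$ forces agreement, and it lies in $C(\alpha^+,\alpha^{++})$. For each $\nu \in A^*(\alpha) \subseteq \bigcap_\gamma A^\gamma(\alpha)$ (after shrinking the index set of $\nu$'s appropriately — note $A^*(\alpha) = B^*(\alpha)$ is a subset of each $A^\gamma(\alpha)$), do the same for $f_\nu^\gamma$ to get $f_\nu^*$ with domain $d$; again clause (7)'s forcing statement $p+\vec\mu \restriction (n+|\vec\mu|) \Vdash g_\nu(\zeta) = t_\nu(\zeta)$ guarantees coherence so the pointwise "first $\gamma$" definition works. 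For the $F$-part: for $\nu \in A^*(\alpha)$ we have, by clause (6), that along the relevant stems $F^\gamma(\nu)_0 = F^{\gamma'}(\nu)_0$ and $F^\gamma(\nu)_1 \leq^* F^{\gamma'}(\nu)_1$ in that fixed quotient forcing; so $\langle F^\gamma(\nu)_1 \mid \gamma < \delta\rangle$ is a $\leq^*$-decreasing sequence in $\dot P_{\dot\beta_\nu/\nu}$, and by the induction scheme (Proposition~\ref{indsch}(\ref{5})), that quotient is $\nu^*$-closed under $\leq^*$ where $\nu^* > \nu > \delta$ (since all the $\nu$'s appearing are well above $\delta$: actually I should note $\min A^*(\alpha)$ can be taken $> \delta$, shrinking $B^*$ if needed), so the sequence has a $\leq^*$-lower bound $\dot q_\nu^*$; set $F^*(\nu) = \langle \dot P_{\dot\beta_\nu/\nu}, \dot q_\nu^* \rangle$. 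Then $p^* = \langle f_0^*, \vec f^*, A^*, F^* \rangle$ with common domain $d$ is a pure condition, and checking $p^* \leq^* p^\gamma$ for every $\gamma$ is just matching the seven clauses against the constructions.

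The main obstacle I expect is the bookkeeping around clauses (6) and (7) of the direct extension relation: those conditions are not plain forcing equalities but are stated "along stems" $p+\vec\mu \restriction P_\nu$ or $p+\vec\mu \restriction (n+|\vec\mu|)$, i.e. relative to the one-step extensions. So to even make sense of "$\langle F^\gamma(\nu)_1\rangle_\gamma$ is $\leq^*$-decreasing in a fixed forcing" I must check that the relevant one-step extensions $p^\gamma + \vec\mu$ restricted to $P_\nu$ stabilize (they do, essentially because the stem of a pure condition is empty so $p^\gamma + \vec\mu \restriction P_\nu$ only depends on $f_0^\gamma \circ \mu_0^{-1}$, $F^\gamma$ along the way, and the coherence of the trees, all of which are being controlled), and that the forcing $\dot P_{\dot\beta_\nu/\nu}$ into which $F^\gamma(\nu)_1$ lives is literally the same name for all $\gamma$. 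A secondary point requiring care is that $A^*(\alpha)$ must be a common subset of all $A^\gamma(\alpha)$ so that $F^*$ and $\vec f^*$ are defined on the right index set; this is automatic from the pullback-intersection construction since $B^* \restriction d^\gamma \subseteq B^\gamma$ forces $A^*(\alpha) \subseteq A^\gamma(\alpha)$, but it should be stated explicitly. Finally, completeness of the $\vec E_\alpha(d)$ measures for intersections of length $\delta < \alpha$ must be invoked — this follows from the fact (in Section~\ref{bas}) that each $E_{\alpha,\beta}(d)$ is a $\kappa$-type complete measure derived from an extender with critical point $\alpha$, hence $\alpha$-complete, and $\vec E_\alpha(d)$-largeness is preserved under $<\!\alpha$-intersections via the disjoint decomposition into the $E_{\alpha,\beta}(d)$'s together with a diagonal intersection at the end.
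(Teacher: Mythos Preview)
Your proposal is correct and follows essentially the same approach as the paper: take the union of the common domains, intersect the pullbacks of the trees (shrinking so that $\min A^*(\alpha)$ exceeds the length of the sequence), define $f_0^*$ and each $f_\nu^*$ by amalgamating the coherent values, and use the inductive $\leq^*$-closure of $\dot P_{\dot\beta_\nu/\nu}$ (Proposition~\ref{indsch}(\ref{5})) to obtain $F^*(\nu)$. Your discussion of the bookkeeping around clauses (6) and (7) is more careful than the paper's own proof, which simply writes that the values are ``forced'' to agree without spelling out the stem dependence.
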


\begin{proof}
    Let $\beta<\alpha$ and $\langle p^{\beta^\prime} \mid \beta^\prime<\beta\rangle$ be a $\leq^*$-decreasing sequence of conditions in $P_\alpha$.
    Write $p^{\beta^\prime}=\langle f_0^{\beta^\prime},\vec{f}^{\beta^\prime},A^{\beta^\prime},F^{\beta^\prime} \rangle$ with its common domain $d^{\beta^\prime}$.
    Let $d^*=\cup_{\beta^\prime<\beta} d^{\beta^\prime}$, $f^*_0=\cup_{\beta^\prime<\beta} f_0^{\beta^\prime}$. Let $(A^{\beta^\prime})^*$ be the $d^*$-tree obtained by pulling back $A^{\beta^\prime}$, and $A^*=\cap_{\beta^\prime<\beta} (A^{\beta^\prime})^*$. Shrink $A^*$ further so that $\min(A^*(\alpha))>\beta$. By induction on $\nu \in A^*(\alpha)$, we may find $f^*_\nu$ and $F^*(\nu)$ such that
    \begin{itemize}
        \item for $\zeta \in d^*$, $f^*_\nu(\zeta)$ is ``forced" to be equal to $f^{\beta^\prime}_\nu(\zeta)$ for some sufficiently large $\beta^\prime$ that $\zeta \in \dom(f^{\beta^\prime})$.
        \item $F^*(\nu)=\langle \dot{P}_{\dot{\beta}_\nu/\nu},\dot{q}_\nu^*\rangle$ is such that $\dot{q}_\nu^*$ is ``forced" to be a $\leq^*$-lower bound of $\langle \dot{q}^{\beta^\prime}_\nu \mid \beta^\prime<\beta \rangle$, where $F^{\beta^\prime}(\nu)=\langle \dot{P}_{\dot{\beta}_\nu/\nu},\dot{q}^{\beta^\prime}_\nu\rangle$. This is possible because $\Vdash_\nu ``(\dot{P}_{\dot{\beta}_\nu/\nu},\leq^*) \text{ is $\nu^*$-closed}"$, where $\nu^*$ is the least inaccessible above $\nu$, and $\nu>\beta$.
    \end{itemize}
    Then $\langle f^*,\vec{f}^*,A^*,F^* \rangle$, where $\vec{f}^*=\{ f^*_\nu \mid \nu \in A^*(\alpha) \text{ is inaccessible}\}$, is as required.
 \end{proof}   

\begin{theorem}
\label{prikry1}
$(P_\alpha,\leq,\leq^*)$ has the Prikry property, i.e. for $p \in P_\alpha$ and a forcing statement $\varphi$, there is $p^* \leq^* p$ such that $p^* \parallel \varphi$.
\end{theorem}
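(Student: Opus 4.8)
The plan is to prove the Prikry property by the standard Prikry-Mathias strategy adapted to the extender-based setting with Cohen parts, using the fact that $\leq^*$ on pure conditions is $\alpha$-closed (Proposition \ref{closure1}) together with the $\alpha^{++}$-c.c.\ (Proposition \ref{cc1}) and, crucially, the inductively-given Prikry property and high closure of the quotient forcings $\dot{P}_{\dot{\beta}_\nu/\nu}$ from Proposition \ref{indsch}(\ref{5}). First I would reduce to the case where $p$ is a pure condition: given an impure $p = p\restriction(n){}^\frown\tp(p)$, the stem part $p\restriction(n)$ lives in $P_{\alpha_{n-1}}*\dot{P}_{\dot{\beta}_{n-1}/\alpha_{n-1}}$, which by induction has the Prikry property, and one works below a condition in that iteration that decides the relevant data; the top part is then handled exactly as in the pure case over the model $V^{P_{\alpha_{n-1}}*\dot{P}_{\dot{\beta}_{n-1}/\alpha_{n-1}}}$. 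So assume $p = \langle f_0,\vec{f},A,F\rangle$ is pure with common domain $d$.

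\medskip
\noindent\emph{Deciding $\varphi$ level by level.} The core is a lemma: for every pure $p$ and every statement $\varphi$, there is $p^*\leq^* p$ such that for every $\vec{\mu}\in A^{p^*}$, if some extension of the interleaving part of a direct extension of $p^*+\vec{\mu}$ decides $\varphi$, then $p^*+\vec{\mu}$ already decides $\varphi$ (``$p^*+\vec{\mu}$ decides $\varphi$ strongly''). I would prove this by recursion on the length of $\vec{\mu}$, building a $\leq^*$-decreasing $\omega$-or-shorter sequence and taking $\leq^*$-lower bounds via Proposition \ref{closure1}. At the base level: for $\langle\mu\rangle\in\Lev_0(A)$ with $\nu=\mu(\alpha)$, the one-step extension $p+\langle\mu\rangle$ has stem $r_0=(g_0\circ\mu^{-1},F(\nu))$ living in $P_\nu*\dot{P}_{\dot{\beta}_\nu/\nu}$. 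Whether $\varphi$ is decided after further extension is a question in that forcing together with a fresh extender-based top part over $\nu$; using the inductive Prikry property of $P_\nu*\dot{P}_{\dot{\beta}_\nu/\nu}$ (so the interleaving quotient $\dot{q}$ can be $\leq^*$-shrunk to decide things) and the inductive structure at level $\nu$, one finds a single $\leq^*$-improvement of $F(\nu)_1$ (i.e.\ of $\dot{q}$) and of the tree $A_{\langle\mu\rangle}$ that makes $p+\langle\mu\rangle$ decide $\varphi$ strongly whenever possible. One does this uniformly in $\mu\in\Lev_0(A)$ — there are $\leq\alpha$ relevant values of $\nu$ and, by Remark \ref{ppoitre}, $\leq\nu^{++}$ objects over each — so the modifications can be amalgamated into a single $\leq^*$-extension using closure of the quotients' $\leq^*$ (which is $\nu^*$-closed, $\nu^*>\beta$) and of pure conditions. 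Then one diagonalizes over levels.

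\medskip
\noindent\emph{From strong decision to outright decision.} Having obtained $p^*$ with the strong-decision property, suppose $q\leq p^*$ decides $\varphi$, say $q\Vdash\varphi$. Then $q\leq p^*+\vec{\mu}$ for some $\vec{\mu}\in A^{p^*}$ (or $\vec{\mu}=\langle\rangle$), and $q$ is obtained by extending the interleaving part of a direct extension of $p^*+\vec{\mu}$; by the strong-decision property $p^*+\vec{\mu}$ already decides $\varphi$, necessarily as $\varphi$. If $\vec{\mu}=\langle\rangle$ we are done since $p^*\leq^* p$. Otherwise I must remove the need for $\vec{\mu}$: this is where a second ingredient enters — for each $\vec{\mu}$ the set of $\vec{\mu}'$ of the same shape for which $p^*+\vec{\mu}'\Vdash\varphi$ (resp.\ $\Vdash\neg\varphi$, resp.\ $\Vdash$ neither) is measured correctly, so by normality / the properties of $\vec{E}_\alpha(d)$ and diagonal intersections one can shrink the tree $A^{p^*}$ so that the ``verdict'' of $p^*+\vec{\mu}$ depends only on $|\vec{\mu}|$, and then a further shrinking (again using $\alpha$-closure to handle all finite lengths simultaneously) forces the verdict to be independent of $\vec{\mu}$ entirely, hence equal to the verdict of the empty extension. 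That final $p^*$ is a direct extension of $p$ deciding $\varphi$.

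\medskip
\noindent The step I expect to be the main obstacle is the base-level strong-decision step, specifically managing the interaction between the Cohen-like function $g_\nu$ (which becomes, after one step, a name $h_0$ with respect to $P_\nu*\dot{P}_{\dot{\beta}_\nu/\nu}$) and the requirement that the improvement of $\dot{q}=F(\nu)_1$ be uniform over all $\mu$ projecting to the same $\nu$ and compatible with the tree structure; this is exactly the ``substantial use of names'' flagged in the introduction, and it is where one must be careful that the $\leq^*$-closure of the quotients (Proposition \ref{indsch}(\ref{5}), $\nu^*$-closed) is genuinely strong enough to absorb the $\leq\nu^{++}$-many demands coming from the objects over $\nu$ while still leaving room to later append further blocks.
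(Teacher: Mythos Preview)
Your overall strategy matches the paper's, but the amalgamation step has a real gap. You claim the per-$\mu$ improvements ``can be amalgamated into a single $\leq^*$-extension using closure''. This is fine for $F(\nu)_1$ --- the quotient $\dot P_{\dot\beta_\nu/\nu}$ is $\nu^*$-closed with $\nu^*>\nu^{++}$ --- but not for the Cohen parts. Applying the inductive Prikry property at $P_\nu*\dot P_{\dot\beta_\nu/\nu}$ to the stem $(f_0\circ\mu^{-1},F(\nu))$ requires extending the Cohen component $f_0\circ\mu^{-1}\in C(\nu^+,\nu^{++})$ to some $f^\mu$ with domain $d_\mu\supseteq\rng(\mu)$. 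For distinct $\mu$ these $f^\mu$ live on incomparable domains inside $\nu^{++}$ and are in general incompatible, so no closure argument produces a common refinement; nor can you pull them back through $\mu$, since the new points of $d_\mu$ lie outside $\rng(\mu)$. The paper's solution is to integrate via the extender: one sets (schematically) $f^*_{\nu'}=j_{\alpha,0}(\mu\mapsto f^\mu_{\nu'})(\mc_{\alpha,0}(d))$, obtaining a single function on an enlarged $\alpha$-domain $d^*\supseteq d$ such that for measure-one many $\psi$, $f^*_{\nu'}\circ\psi^{-1}$ extends $f^{\psi\restriction d}_{\nu'}$. This \L o\'s-style integration, not closure, is what does the work here.

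Relatedly, your strong-decision lemma as stated --- that $p^*+\vec\mu$ itself decides $\varphi$ whenever some direct extension with interleaving parts extended does --- is too strong to obtain directly. Below the first measurable $\leq^*=\leq$ on the lower blocks, so direct extensions of $\stem(p^*+\langle\mu\rangle)$ include arbitrary Cohen extensions, and two such extensions can force opposite verdicts; nothing in your setup rules this out. The paper's corresponding Lemma~\ref{maximize1} asserts only that the \emph{top part} can be pulled back to that of the interpolant, leaving the stem $r_0$ arbitrary; it is proved not by closure but by a density argument in an auxiliary $\alpha^+$-closed poset $\mathbb A$ of Cohen sequences, run inside an elementary submodel $N\prec H_\theta$ of size $\alpha$ with ${}^{<\alpha}N\subseteq N$. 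Only after this does the main proof enumerate all possible stems $r$, apply the inductive Prikry property to the auxiliary statements $\varphi^i_{\vec\tau,\mu}$ (asserting that some $t\in\dot G_\nu$ makes $t{}^\frown\tp(p+\vec\tau{}^\frown\langle\mu\rangle)$ decide $\varphi$), perform the ultrapower integration described above, and then shrink the tree so that the verdict index $i(r)$ is uniform --- at which point your minimal-$n$ argument in the last paragraph goes through.
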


To prove Theorem \ref{prikry1}, we start with the following lemma.

\begin{lemma}
\label{maximize1}
Let $p \in P_\alpha$ and $\varphi$ be a forcing statement. Then there is $p^* \leq^* p$ such that if $r=r_0 {}^\frown \tp(r)$, $r \leq p^*$, $r \parallel \varphi$, and $p^\prime$ is the interpolant of $r$ and $p^*$, then

$$r_0  {}^\frown \tp(p^\prime) \parallel \varphi \text{ the same way}.$$
\end{lemma}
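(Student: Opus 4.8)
\textbf{Proof proposal for Lemma \ref{maximize1}.}

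The plan is to carry out a standard ``preparation/maximization'' argument adapted to the present forcing, building $p^*\le^* p$ by recursion on the tree $A^{p}$ so that the decision about $\varphi$ made by any one-step extension (followed by lower-block refinements captured in the interpolant) is already witnessed above the \emph{direct} extension of the appropriate one-step extension. First I would fix $p$ and, using Proposition \ref{closure1} together with the $\alpha^{++}$-c.c.\ of $P_\alpha$ (Proposition \ref{cc1}) and the Prikry property of the strictly lower forcings $P_\nu*\dot P_{\dot\beta_\nu/\nu}$ (which is part of the induction hypothesis, Proposition \ref{indsch}(\ref{5})), set up a bookkeeping over $\vec\mu\in A^{p}$. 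For each $\vec\mu$ with $\vec\mu(\alpha)=\nu$, the condition $q=p+\vec\mu$ splits as $q\restriction P_\nu * \dot P_{\dot\beta_\nu/\nu}$ followed by a ``tail'' that is again (a name for) a pure condition at $\alpha$ over the domain $d^{p}$; I want to arrange that this tail, viewed in $V^{P_\nu*\dot P_{\dot\beta_\nu/\nu}}$, already decides $\varphi$ in the strongest possible way, i.e.\ I first apply the Prikry property of the tail pure forcing (which is essentially Theorem \ref{prikry1} read one level down, or is obtained by the same recursion) to find a direct extension of the tail that is ``$\varphi$-good'', and then I push the decision into the interleaving part below $\nu$ using the Prikry property there.

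The recursion proceeds from the top of the tree downward in the usual Prikry-style fashion. Concretely: enumerate the relevant stems; for the $0$-th level, for each $\mu\in\Lev_0(A^p)$ with $\mu(\alpha)=\nu$, form $p+\langle\mu\rangle = (\text{lower blocks}){}^\frown r_0{}^\frown r_1$ where $r_0=(g_0\circ\mu^{-1},F(\nu))\in P_\nu*\dot P_{\dot\beta_\nu/\nu}$ and $r_1$ is (forced to be) a pure condition at $\alpha$; by the inductively available Prikry property of $P_\nu*\dot P_{\dot\beta_\nu/\nu}$ and of the pure tail forcing, find $r_0'\le^* r_0$ and a $\le^*$-shrinking $r_1'$ of $r_1$ such that $r_0'{}^\frown\tp(r_1')$ already decides $\varphi$ the same way that \emph{some} extension $r\le p+\langle\mu\rangle$ with $r_0$-part direct below $r_0'$ does — or else decides that no such $r$ forces $\varphi$ (resp.\ $\neg\varphi$). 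One must iterate this finitely/transfinitely many times to handle the finitely many ``sub-cases'' (force $\varphi$, force $\neg\varphi$), closing under $\le^*$ by Proposition \ref{closure1} and the $\nu^*$-closure of the quotients. Then repeat for the successor levels of the tree: for $\vec\mu{}^\frown\langle\tau\rangle$ use the already-stabilized condition from the $\vec\mu$-stage as the base. Finally, diagonalize over $\nu\in A^p(\alpha)$ (using the $d$-tree diagonal intersection and Remark \ref{ppoitre} to keep the object sets measure-one) and take a $\le^*$-fusion to obtain a single $p^*\le^* p$; because the whole construction only ever shrank $A$, shrank the $F(\nu)$'s in $\le^*$, and $\le^*$-extended the interleaving parts, $p^*$ is a genuine direct extension of $p$. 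The point of having the \emph{sequence} of Cohen-like functions $\vec g$ (rather than a single ground-model Cohen function) is exactly that it lets the top part $g_\nu$ be a name over $P_\nu*\dot P_{\dot\beta_\nu/\nu}$, so these Prikry-property-at-level-$\nu$ applications go through with the right closure.

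I expect the main obstacle to be the interaction of the Cohen-name components with the tree recursion: when I $\le^*$-refine the quotient part $r_0$ at a node $\nu$, I change what ``$\dot\beta_\nu$'' and hence $\beta^*$ is, which in turn reshapes $A'=\{\vec\tau\in A_{\langle\mu\rangle}\mid\tau_0(\alpha)>\beta^*\}$ and the functions $g_\nu$ that must be forced equal across the direct extension; I need to verify that these reshapings are compatible with taking a diagonal intersection and with the definition of $\le^*$ (clause (6)--(7) of the direct extension), i.e.\ that ``deciding $\varphi$ the same way'' is preserved when I later extend the interleaving part further down. The careful bookkeeping — which conditions $r\le p^*$ with a given stem and interpolant $p'$ we have to simultaneously handle, and the observation that there are at most $|A^p(\alpha)|\le\alpha$ such ``color classes'' so the fusion stays within the $\alpha$-closure of the pure part — is the technical heart, and it is the analogue of the corresponding lemma in \cite{GitJir22}; the novelty here is purely in threading the names $g_\nu$ and $F(\nu)_1$ through, which is why the lemma is phrased in terms of the interpolant $p'$ rather than directly in terms of $p^*$.
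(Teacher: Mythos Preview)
Your sketch has the right spirit (a preparation/maximization argument), but there is a genuine circularity: you invoke ``the Prikry property of the tail pure forcing'' in order to $\le^*$-shrink $r_1$ at each node, yet that tail is a top part $\langle h_0',\vec h',A',F'\rangle$ living at level $\alpha$, with the same $\vec E_\alpha$-tree machinery as $P_\alpha$ itself. Its Prikry property is precisely Theorem~\ref{prikry1}, whose proof \emph{uses} Lemma~\ref{maximize1}. ``Read one level down'' does not help either, because the tail is not a condition at some level $\nu<\alpha$. No Prikry property at level $\alpha$ is available at this point; the maximization has to be a bare ``try and see'' step: for each possible lower stem $r$, search for a tail-direct-extension that decides $\varphi$; if one exists, incorporate it, otherwise pass.

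The paper organizes this quite differently from your direct tree recursion, and that organization is where the real content lies. It isolates an auxiliary forcing $\mathbb{A}$ consisting only of the Cohen-like data $\langle g_0\rangle{}^\frown\vec g$ (which is $\alpha^+$-closed), picks an elementary $N\prec H_\theta$ with $|N|=\alpha$ and ${}^{<\alpha}N\subseteq N$, and builds an $\mathbb{A}$-generic sequence of length $\alpha$ over $N$; the common domain of $p^*$ is then $N\cap\alpha^{++}$. For each $\nu\in A^*(\alpha)$ one defines a dense open $D_\nu\subseteq\mathbb{A}$ saying: there exist witnesses $A^\nu,F^\nu$ such that for \emph{every} $r\in P_\nu*\dot P_{\dot\beta_\nu/\nu}$, if some tail-direct-extension of $r{}^\frown\langle g_\nu,\ldots,A^\nu,F^\nu\rangle$ decides $\varphi$, then already $r{}^\frown\langle g_\nu,\ldots,A^\nu,F^\nu\rangle$ does. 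The density of $D_\nu$ is proved by enumerating the $(\xi^*)^{++}<\alpha$ many $r$'s and doing the ``try and see'' step at each, using only closure of $\mathbb{A}$ and of the lower quotients --- never any Prikry property at $\alpha$. The generic over $N$ then meets every $D_\nu$, and a diagonal intersection of the $A^\nu$'s (together with the accumulated $F$-values) gives $p^*$. Separating the Cohen-name component into $\mathbb{A}$ is exactly what lets one simultaneously handle all $\nu$ and all $r$ within the available closure; your proposal does not indicate how to thread the names $g_\nu$ (each a name over a different $P_\nu*\dot P_{\dot\beta_\nu/\nu}$) through a single recursion on tree levels, and without the $\mathbb{A}/N$ device this bookkeeping is the missing idea.
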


\begin{proof}
    Assume for simplicity that $p$ is pure and write $p=\langle f_0,\vec{f},A,F\rangle$ with its common domain $d$. A forcing $\mathbb{A}$ consists of conditions of the form $g=\langle g_0 \rangle {}^\frown \vec{g}$, where there is a common domain $d_g$ such that
    \begin{itemize}
        \item $\dom(g_0)=d_g$, $\vec{g}=\langle g_\nu \mid \nu \in A(\alpha) \rangle$, and for all $\nu$, $\dom(g_\nu)=d_g$.
        \item for $\zeta \in d_g$, $f_0(\zeta)<\alpha$ and for $\beta<\alpha$ inaccessible, $\Vdash_{P_\nu *\dot{P}_{\dot{\beta}_\nu/\nu}} ``g_\beta(\zeta)<\alpha"$.
    \end{itemize}

    For $g^0,g^1 \in \mathbb{A}$, define $g^0 \leq_{\mathbb{A}} g^1$ if $g^0_0 \supseteq g^1_0$, and for $\nu \in A$ and $\zeta \in d_{g^1}$, $\Vdash_{P_\nu*\dot{P}_{\dot{\beta}_\nu/\nu}} ``g^0_\nu(\zeta)=g^1_\nu(\zeta)"$.
    Clearly, $\mathbb{A}$ is $\alpha^+$-closed.

    Let $N \prec H_\theta$ for some sufficiently large regular $\theta$, ${}^{<\alpha}N \subseteq N$, $|N|=\alpha$, $d,V_\alpha \subseteq N$, $p,\mathbb{P},\mathbb{A} \in N$. Build an $\mathbb{A}$-decreasing sequence $\langle f^\gamma \mid \gamma<\alpha \rangle$ below $\langle f_0 \rangle {}^\frown \vec{f}$ such that for every dense open set $D \in N \cap \mathcal{P}(\mathbb{A})$, there are unboundedly many $\gamma<\alpha$ such that $f^\gamma \in D$.
    Let $f^*=\langle f^*_0 \rangle {}^\frown \vec{f}^*$ be the maximal $\leq^*$-lower bound of $\langle f^\gamma \mid \gamma<\alpha \rangle$ and $d^*$ be its common domain, so $d^*=N \cap \alpha^{++}$. Let $A^*$ be the $d^*$-tree which is the pullback of $A$. Note that $A^* \subseteq N$. We may assume $A^*$ is generated by $B^* \subseteq \mathcal{B}_{d^*}$.

    We are now going to consider an $\mathbb{A}$-decreasing subsequence $\langle f^{\gamma_\nu} \mid \nu \in A^*(\alpha) \rangle$ of $\langle f^\gamma \mid \gamma<\alpha \rangle$, together with $\langle \dot{q}^\nu_{\nu^\prime} \mid \nu,\nu^\prime \in A^*(\alpha) \rangle$ and $\langle A^\nu \mid \nu \in A^*(\alpha) \rangle$ which satisfy a certain property, and
    \begin{itemize}
        \item for each $\nu^\prime$, $\langle \dot{q}^\nu_{\nu^\prime} \mid \nu \in A^*(\alpha) \rangle$ is forced to be $\leq^*$-decreasing below $\dot{q}_{\nu^\prime}$, where $F(\nu^\prime)=\langle \dot{P}_{\dot{\beta}_{\nu^\prime}/\nu^\prime},\dot{q}_{\nu^\prime} \rangle$.
        \item for $\nu^\prime<\nu$, $\dot{q}^\nu_{\nu^\prime}=\dot{q}^{\nu^\prime}_{\nu^\prime}$.
    \end{itemize}
    All the proper initial subsequences will be in $N$ (the key point is that ${}^{<\alpha}N \subseteq N$.
    Let $\nu \in A^*(\alpha)$ and suppose that $\langle f^{\gamma_{\nu^\prime}} \mid \nu^\prime<\nu, \nu^\prime \in A^*(\alpha) \rangle$, $\langle \dot{q}^{\nu^\prime}_{\rho} \mid \nu^\prime<\nu$, $\nu^\prime,\rho \in A^*(\alpha)\rangle$, and $\langle A^{\nu^\prime} \mid \nu^\prime<\nu$, $\nu^\prime \in A^*(\alpha) \rangle$ have been constructed. 
    For $\nu^\prime<\nu$, let $\dot{q}^\nu_{\nu^\prime}=\dot{q}^{\nu^\prime}_{\nu^\prime}$.
    Let $f^\prime$ be the maximal lower bound of the sequence $\langle f^{\gamma_{\nu^\prime}} \mid \nu^\prime<\nu, \nu^\prime \in A^*(\alpha) \rangle$.  For $\rho \geq \nu$, Let $\dot{q}_\rho^*$ be a $P_\rho$-name of a condition in $\dot{P}_{\dot{\beta}_\rho/\rho}$ which is forced to be a $\leq^*$-maximal lower bound of $(\dot{q}^{\nu^\prime}_\rho)_{\nu^\prime<\nu}$. This is possible since $\Vdash_\rho ``(\dot{P}_{\dot{\beta}_\rho/\rho},\leq^*) \text{ is } \nu^+\text{-closed}"$ and note that $\langle \dot{q}^*_\rho \mid \rho \geq \nu \rangle \in N$.
    Consider the following set $D_\nu \subseteq \mathbb{A}$. $g=\langle g_0 \rangle {}^\frown \vec{g} \in D_\nu$ with the common domain $d_g$, if either $\langle g_0 \rangle {}^\frown \vec{g}$ is incompatible with $\langle f_0 \rangle {}^\frown \vec{f}$, or the following holds:

    \begin{itemize}
    \item for every $\vec{\mu} \in A^*$ with $\vec{\mu}(\alpha)=\nu$, $\dom(\vec{\mu}) \subseteq d_g$.
    \item there are 
    \begin{itemize}

        \item a $d_g$-tree $A^\nu$ with $\min(A^\nu(\alpha)) > \xi^*:=\{\xi \mid \exists t \in P_\nu (t \Vdash_\nu ``\dot{\beta}_\nu=\xi")\}$, and
        \item a function $F^\nu$ with $\dom(F^\nu)=A^\nu(\alpha)$,
        \item for $\rho \in A^\nu(\alpha)$, $ \Vdash_\rho ``F^\nu(\rho)_1 \leq^* \dot{q}^*_\rho"$,
    \end{itemize}
    such that for every $r \in P_\nu* \dot{P}_{\dot{\beta}_\nu/\nu}$, if there are $h_0, \vec{h}$, $A^\prime$, and $F^\prime$ such that
    $$r {}^\frown \langle h_0,\vec{h}, A^\prime,F^\prime \rangle \leq^* r {}^\frown  \langle g_{\nu},\langle g_{\nu^\prime} \mid \nu^\prime \in A^\nu(\alpha)\rangle,A^\nu,F^\nu \rangle,$$
    and 
    $$r {}^\frown \langle h_0,\vec{h}, A^\prime,F^\prime \rangle \parallel \varphi,$$
    then $$r {}^\frown \langle g_{\nu},\langle g_{\dot{\beta}_{\nu^\prime}} \mid \nu^\prime \in A^\nu(\alpha)\rangle,A^\nu,F^\nu \rangle \parallel \varphi \text{ the same way.}$$
    \end{itemize}

\begin{claim}
\label{claimmax1}
$D_\nu \in N$ is open dense.
\end{claim}
\begin{proof}
The parameters we use to define $D_\nu$ are: $\mathbb{A}$, $p$, and $P_\nu* \dot{P}_{\dot{\beta}_\nu/\nu}$. Thus, $D_\nu \in N$.
To check the openness of $D_\nu$, note that if $\vec{g}^0 \leq_{\mathbb{A}} \vec{g}^1$ and $\vec{g}^1 \in D_\nu$ with the witnesses $A^\nu$. and $F^\nu$, then $\vec{g}^0$ is also in $D_\nu$ with the same witnesses. 

It remains to show that $D_\nu$ is dense. Let $g_0 {}^\frown \vec{g} \in \mathbb{A}$. If $\langle g_0 \rangle {}^\frown \vec{g} \nparallel \langle f_0 \rangle {}^\frown \vec{f}$, then we are done. Suppose not, we may assume $\langle g_0 \rangle {}^\frown \vec{g} \leq_{\mathbb{A}} \langle f_0 \rangle {}^\frown \vec{f}$. By  Proposition \ref{indsch} for $\nu$, let $\langle r_\xi \mid \xi<(\xi^*)^{++} \rangle$ be an enumeration of elements in $P_\nu*\dot{P}_{\dot{\beta}_\nu/\nu}$ (with some repetitions if needed). Build sequences $\langle \langle h_0^\xi \rangle {}^\frown \vec{h}^\xi \rangle$, $\langle A_\xi, F_\xi \mid \xi \leq (\xi^*)^{++} \rangle$ such that
\begin{itemize}
    \item $\langle \langle h_0^\xi \rangle {}^\frown \vec{h}^\xi \rangle_{\xi \leq (\xi^*)^{++}}$ is $\mathbb{A}$-decreasing, and is below $\langle g_0 \rangle{}^\frown \vec{g}$.
    \item $\langle A_\xi \mid \xi \leq (\xi^*)^{++} \rangle$ is a $\dom(h^\xi_0)$-tree, for each $\xi$, $A_\xi$ is a $\dom(h_0^\xi)$-tree, $\min(A_\xi(\alpha))>\xi^*$, and for $\xi<\xi^\prime$, $A^{\xi^\prime}$ projects down to a subset of $A^\xi$.
    \item for $\nu^\prime \in A_\xi(\alpha)$, $\langle F_\xi(\nu^\prime)_1 \rangle_{\xi \leq (\xi^*)^{++}}$ is forced to be $\leq^*$-decreasing below $\dot{q}_{\nu^\prime}^*$.
    \item for $\xi<(\xi^*)^{++}$, if there are $h_0^\prime, \vec{h}^\prime, A^\prime$, and $F^\prime$ such that
    $$r_\xi {}^\frown \langle h_0^\prime, \vec{h}^\prime, A^\prime,F^\prime \rangle$$ is a direct extension of  $$ t^*:=r_\xi {}^\frown  \langle h^{\xi+1}_{\nu}, \langle h^{\xi+1}_{\rho} \mid \rho \in A_{\xi+1}(\alpha) \rangle,A_{\xi+1},F_{\xi+1} \rangle,$$
    and $$r_\xi {}^\frown \langle h_0^\prime, \vec{h}^\prime, A^\prime,F^\prime \rangle \Vdash \varphi,$$
    then $t^*$ decides $\varphi$ the same way.
    \end{itemize}
    The construction is straightforward, and for a limit $\xi$, we can take the obvious $A_\xi$ and $F_\xi$ which satisfy the requirement. Finally, let $\langle g_0 \rangle {}^\frown \vec{g}= \langle h_0^{(\xi^*)^{++}} \rangle {}^\frown \vec{h}^{(\xi^*)^{++}}$, $A^\nu=A_{(\xi^*)^{++}}$, and $F^\nu=F_{(\xi^*)^{++}}$. These will be the witnesses for $\langle g_0 \rangle {}^\frown \vec{g} \in D_\nu$.

\end{proof}

Let $\gamma_\nu \geq \sup_{\nu^\prime<\nu} \gamma_{\nu^\prime}$ such that $f^{\gamma_\nu} \in D_\nu$. Also, we obtain the witnesses, $A^\nu$ and $F^\nu$. Let $\dot{q}^\nu_\nu=\dot{q}^*_\nu$.
For $\rho>\xi^*$, let $\dot{q}^\nu_\rho=F^\nu(\rho)_1$ if exists, otherwise, let $\dot{q}^\nu_\rho=\dot{q}_\rho^*$. We also take $\dot{q}^\nu_\rho=\dot{q}_\rho^*$ for other $\rho$ where $\dot{q}^\nu_\rho$ is not yet defined.
This completes our analysis.

Assume that the pullback of $A^\nu$ to the $d^*$-tree has a subtree which is generated by $B^\nu \in E(d^*)$. Let $A^{**}$ be a $d^*$-tree generated by $\Delta_\nu B^\nu$. Let $F^{**}$ be a function with $\dom(F^{**})=A^{**}(\alpha)$ and for $\nu \in A^{**}(\alpha)$, $F^{**}(\nu)=\langle \dot{P}_{\dot{\beta}_\nu/\nu},\dot{q}^{**}_\nu \rangle$, $\dot{q}^{**}_\nu$ is the $\leq^*$-maximal lower bound of $(\dot{q}^{\nu^\prime}_\nu)_{\nu^\prime \in A(\alpha)}$. This is possible since $(\dot{q}^{\nu^\prime}_\nu)_{\nu^\prime \in A(\alpha)}$ stabilizes after the stage $\nu^\prime=\nu$ (equivalently, we take $\dot{q}^{**}_\nu=\dot{q}^\nu_\nu$).
Then $p^*=\langle f_0^*, \vec{f}^*,A^{**},F^{**} \rangle \leq^* p$.

We now show that $p^*$ satisfies Lemma \ref{maximize1}. Let $p^\prime \leq p^*$ such that $p^\prime$ decides $\varphi$, $p^\prime$ is of the form 
$$p^\prime=r {}^\frown \langle h_0^\prime,\vec{h}^\prime,A^\prime,F^\prime \rangle,$$

Without loss of generality, assume that $p^\prime \Vdash \varphi$. Let $\bar{p}$ be the interpolant of $p^*$ and $p^\prime$.
We consider the notions of the proof of Claim \ref{claimmax1}. Say that $r=r_\xi$. By the construction of $A^{**}$, we have that $A^{**}$ projects down to a subset of $A^\nu$. This makes $p^\prime \leq^* t^*$, and hence, $t^* \Vdash \varphi$.
Thus, $r_\xi{}^\frown  \tp(\bar{p}) \Vdash \varphi$. This completes the proof of Lemma \ref{maximize1}.
    
\end{proof}

\begin{proof}[Proof of Theorem \ref{prikry1}]
Let $p$ be a condition and $\varphi$ be a forcing statement. For simplicity, assume $p$ is pure and $p$ satisfies Lemma \ref{maximize1}. Write $p=\langle f_0,\vec{f},A,F \rangle$, $d$ is the common domain for $p$. Assume $A$ is generated by $B \subseteq \mathcal{B}_d$. By shrinking $B$, assume that for $\nu^\prime<\nu$, $\Vdash_{\nu^\prime} ``\dot{\beta}_{\nu^\prime}<\nu"$. 

 By Remark \ref{ppoitre}, let $\{ \mu_\gamma \mid \gamma<\nu^{++}\}$ be an enumeration of $\mu \in B$ and $\mu(\alpha)=\nu$. Let $\{r_\xi \mid \xi<\nu\}$ be an enumeration of  $r \in \cup_{\nu^\prime<\nu}P_{\nu^\prime}*\dot{P}_{\dot{\beta}_{\nu^\prime}/\nu^\prime}$. We are only interested in pairs $r_\xi,\mu_\gamma$ such that $r_\xi \leq \stem(p+\vec{\tau}_\xi)$ for some $\vec{\tau}_\xi<\mu_\gamma$.
Build $\langle \dot{q}^{\gamma,\xi} \mid \gamma<\nu^{++}, \xi <\nu \rangle$ and $\langle f^{\gamma,\xi} \mid \gamma<\nu^{++}, \xi<\gamma \rangle$ such that 
\begin{enumerate}
    \item $(\gamma,\xi)<(\gamma^\prime,\xi^\prime)$ implies $\Vdash_\nu ``\dot{q}^{\gamma^\prime,\xi^\prime} \leq^* \dot{q}^{\gamma,\xi}"$.
    \item for each $\gamma$, and $\xi<\xi^\prime$ such that $r_\xi$ and $r_{\xi^\prime}$ are in the same forcing $P_{\nu^\prime}* \dot{P}_{\dot{\beta}_{\nu^\prime}/\nu^\prime}$, we have that $\Vdash_{P_{\nu^\prime}* \dot{P}_{\dot{\beta}_{\nu^\prime}/\nu^\prime}} ``f^{\gamma,\xi^\prime} \leq f^{\gamma,\xi} \leq f_{\nu^\prime} \circ \mu_\gamma^{-1}"$.
    \item there is $r_{\gamma,\xi}^* \leq^* r_\xi$ such that $r_{\gamma,\xi}^* {}^\frown (f^{\gamma,\xi},\langle \dot{P}_{\dot{\beta}_\nu/\nu},\dot{q}^{\gamma,\xi+1} \rangle) \leq^* r_\xi {}^\frown (f_{\vec{\tau}_\xi(\alpha)} \circ \mu_\gamma^{-1}, \langle \dot{P}_{\dot{\beta}_\nu/\nu},\dot{q}^{\gamma,\xi}\rangle),$ and $r_\xi^* {}^\frown (f^{\xi,\gamma},\langle \dot{P}_{\dot{\beta}_\nu/\nu},\dot{q}^{\gamma,\xi+1} \rangle)$ decides $\varphi_{\vec{\tau}_\xi,\mu_\xi}$, where $\dot{G}_\nu$ is the canonical name for generic over $P_\nu* \dot{P}_{\dot{\beta}_\nu/\nu}$, and
$$\varphi_{\vec{\tau}_\xi,\mu_\xi} \equiv \exists t \in \dot{G}_\nu(t {}^\frown \tp(p+\vec{\tau}_\xi {}^\frown \langle \mu_\xi \rangle) \parallel \varphi).$$
\end{enumerate}
This is possible by the Prikry property at the level below $\alpha$.
By extending further, we assume that
$r_\xi^* {}^\frown (f^{\gamma,\xi},\langle \dot{P}_{\dot{\beta}_\nu/\nu},\dot{q}^{\gamma,\xi+1} \rangle) \Vdash \varphi_{\vec{\tau}_\xi,\mu_\xi}^i$ for unique $i \in \{0,1,2\}$, where

\begin{align*}
    \varphi_{\vec{\tau}_\xi,\mu_\xi}^0 &\equiv \exists t \in \dot{G}_\nu(t {}^\frown \tp(p+\vec{\tau}_\xi {}^\frown \langle \mu_\xi \rangle) \Vdash \varphi), \\
    \varphi_{\vec{\tau}_\xi,\mu_\xi}^1 & \equiv \exists t \in \dot{G}_\nu(t {}^\frown \tp(p+\vec{\tau}_\xi {}^\frown \langle \mu_\xi \rangle) \Vdash \neg \varphi),\\
     \varphi_{\vec{\tau}_\xi,\mu_\xi}^2 & \equiv \nexists t \in \dot{G}_\nu(t {}^\frown \tp(p+\vec{\tau}_\xi {}^\frown \langle \mu_\xi \rangle) \parallel  \varphi).
\end{align*}
For limit $\xi>0$, take $\dot{q}^{\gamma,\xi}$ which is forced to be a $\leq^*$-lower bound of $\dot{q}^{\gamma,\xi^\prime}$ for $\xi^\prime<\xi$. Also for $\gamma>0$, take $\dot{q}^{\gamma,0}$ as a $\leq^*$-lower bound of $\{\dot{q}^{\gamma^\prime,\xi} \mid \gamma^\prime<\gamma$ and all $\xi \}$. The construction proceeds since the following the following hold:
\begin{itemize}
    \item $\Vdash_\nu "(\dot{P}_{\dot{\beta}_\nu/\nu},\leq^*)$ is $\nu^*$-closed", where $\nu^*$ is the first inaccessible cardinal greater than $\nu$. In particular, it is $\nu^{+3}$-closed.
    \item For $\nu^\prime<\nu$, $\Vdash_{P_{\nu^\prime},\dot{P}_{\dot{\beta}_{\nu^\prime},\nu^\prime}} \dot{C}(\nu^+,\nu^{++})$ is $\nu^+$-closed. 
\end{itemize} 

Finally, let $\dot{q}_\nu^*$ be the maximal $\leq^*$-lower bound of $\dot{q}^{\gamma,\xi}$.

For each $\mu=\mu_\gamma$ and $\nu^\prime<\nu$, consider the family $\mathcal{F}=\{f^{\gamma,\xi} \mid \vec{\tau}_\xi(\alpha)=\nu^\prime\}$. This is forced to be $\leq$-decreasing in $\dot{C}(\nu^+,\nu^{++})$ (in $V^{P_{\nu^\prime}*\dot{P}_{\dot{\beta}_{\nu^\prime}/\nu^\prime}}$) below $f_{\nu^\prime} \circ \mu^{-1}$. Let $f_{\nu^\prime}^\mu$ be the maximal $\leq$-lower bound of $\mathcal{F}$. Fix $\mu$. We can extend further each $f_{\nu^\prime}^\mu$ so that for $\nu^\prime_0,\nu^\prime_1$, $\dom(f_{\nu^\prime_0}^\mu)=\dom(f_{\nu^\prime_1}^\mu)$. Let $\mathcal{F}_\mu=\{f_{\nu^\prime}^\mu \mid \nu^\prime<\nu ,\nu^\prime \in A(\alpha) \cup \{0\}\}$ where 
\begin{itemize}
    \item for each $\mu$, $f_{\nu^\prime}^\mu$ is forced to be stronger than $f_{\nu^\prime} \circ \mu^{-1}$.
    \item there is $d_\mu$ such that for all $\nu^\prime$, $\dom(f_{\nu^\prime}^\mu)=d_\mu$.
\end{itemize}

Consider $\mathcal{G}=j(\mu \mapsto \mathcal{F}_\mu)(\mc(d))$. Then $\mathcal{G}=\langle f_0^* \rangle {}^\frown \langle f^*_{\nu^\prime} \mid \nu^\prime \in A(\alpha) \rangle$, with some common domain $d^*$. Note that the collection $B^*$ of $\psi \in \OB_{\alpha,0}(d^*)$ such that $\psi \restriction d \in B$, and for $\nu^\prime<\psi(\alpha)$ including $0$, $\Vdash_{P_{\nu^\prime} \circ \dot{P}_{\dot{\beta}_{\nu^\prime}/\nu^\prime}} f_{\nu^\prime}^* \circ \psi^{-1} \leq f^{\psi \restriction d}_{\nu^\prime}$" is of measure-one. Let $A^*$ be generated by $B^*$. Now let $F^*$ be such that $\dom(F^*)=A^*(\alpha)$ and for each $\nu$, $F^*(\nu)_1=\dot{q}_\nu^*$. Let $p_1=\langle f_0^*,\vec{f}^*,A^*,F^* \rangle$.

\begin{claim}
    Let $\bar{p}=p_1+\vec{\pi} {}^\frown \langle \psi \rangle$, $\vec{\tau}=\vec{\pi} \restriction d$, $\mu=\psi \restriction d$. Let $\nu^\prime=\vec{\tau}(\alpha)$ and $\nu=\mu(\alpha)$. Then $r^* {}^\frown (f_{\nu^\prime}^* \circ \psi^{-1},F^*(\nu)) \rangle$ decides $\varphi_{\vec{\tau},\mu}^i$ for some unique $i$.
\end{claim}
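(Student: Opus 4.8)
The plan is to read the claim off from the recursive construction of $p_1$, using {\L}o\'{s}'s theorem for $j_{\alpha,0}$, the coherence of the chosen measure-one sets, and the elementary observation that a direct extension forces everything the weaker condition forces. Recall that $\langle f_0^*\rangle{}^\frown\langle f^*_{\nu''}\mid\nu''\in A(\alpha)\rangle$ was obtained as $j_{\alpha,0}(\mu\mapsto\mathcal F_\mu)(\mc(d))$, that $F^*(\nu)_1=\dot q^*_\nu$ is the maximal $\leq^*$-lower bound of the relevant family of $\dot q$'s produced in the recursion, and that $B^*$ was thinned out precisely so that for every $\psi\in B^*$ with $\mu:=\psi\restriction d=\mu_\gamma$ and every $\nu''\leq\psi(\alpha)$ (including $\nu''=0$), $f^*_{\nu''}\circ\psi^{-1}$ is forced, in the appropriate quotient $P_{\nu''}*\dot P_{\dot\beta_{\nu''}/\nu''}$, to refine $f^{\mu}_{\nu''}$. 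In other words, all of the one-step data carried by $\bar p=p_1+\vec\pi{}^\frown\langle\psi\rangle$ below its top is forced to sit below the data produced at the stage of the recursion indexed by the $\gamma$ with $\mu_\gamma=\mu$.

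With that in hand, here is the sequence of steps. Fix $\bar p$ and its induced parameters $\vec\tau=\vec\pi\restriction d$, $\mu=\psi\restriction d=\mu_\gamma$, $\nu'=\vec\tau(\alpha)$, $\nu=\mu(\alpha)$, and note that $r^*{}^\frown(f^*_{\nu'}\circ\psi^{-1},F^*(\nu))$ is exactly $\stem(\bar p)$, with $r^*$ its restriction to $P_{\nu'}*\dot P_{\dot\beta_{\nu'}/\nu'}$. First I locate in the enumeration $\langle r_\xi\mid\xi<\nu\rangle$ an index $\xi$ with $\vec\tau_\xi=\vec\tau$ and $r_\xi=r^*$; this is a ``relevant'' pair $(r_\xi,\mu_\gamma)$ because $\vec\tau=\vec\pi\restriction d<\psi\restriction d=\mu_\gamma$ and $r^*\leq\stem(p+\vec\tau)$, which holds since $p_1\leq^* p$ and $\vec\pi\restriction d=\vec\tau$. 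Second, I invoke the definition of $B^*$ at $\psi$ to conclude that $f^*_{\nu'}\circ\psi^{-1}$ is forced to refine $f^{\mu}_{\nu'}$, hence $f^{\gamma,\xi}$ (as $\vec\tau_\xi(\alpha)=\nu'$, the function $f^{\gamma,\xi}$ is a member of the family $\mathcal F$ whose maximal $\leq$-lower bound is $f^{\mu}_{\nu'}$), and that $F^*(\nu)_1=\dot q^*_\nu$ is forced below $\dot q^{\gamma,\xi+1}$; the analogous statements hold for the components of $r^*$ at the levels below $\nu$, by the maximal-lower-bound clauses in the construction of $F^*$ and the $f^*_{\nu''}$. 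Third, combining these, $r^*{}^\frown(f^*_{\nu'}\circ\psi^{-1},F^*(\nu))$ is a direct extension of $r^*_{\gamma,\xi}{}^\frown(f^{\gamma,\xi},\langle\dot P_{\dot\beta_\nu/\nu},\dot q^{\gamma,\xi+1}\rangle)$, which by clause~(3) of the construction, together with the subsequent ``extending further'' step, forces $\varphi^{i}_{\vec\tau,\mu}$ for some $i\in\{0,1,2\}$. Since a direct extension forces everything the weaker condition forces, $r^*{}^\frown(f^*_{\nu'}\circ\psi^{-1},F^*(\nu))$ forces $\varphi^{i}_{\vec\tau,\mu}$; and this $i$ is unique because $\varphi^0_{\vec\tau,\mu}$, $\varphi^1_{\vec\tau,\mu}$, $\varphi^2_{\vec\tau,\mu}$ are forced to be pairwise mutually exclusive (any two witnessing $t$'s lie in $\dot G_\nu$, hence are compatible, so the conditions $t{}^\frown\tp(\cdots)$ they determine are compatible in $P_\alpha$ and cannot force contradictory things).

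The step I expect to be the main obstacle is the second one — more precisely, checking that the refinements $r^*_{\gamma,\xi}\leq^* r_\xi$ built into the recursion are genuinely absorbed by the maximal-lower-bound data packaged into $r^*$, namely into $F^*$ restricted to the levels $<\nu$ and into the $j_{\alpha,0}$-image functions $f^*_{\nu''}$ coming from $\mathcal G$. Only once this absorption is verified can one assert that $r^*{}^\frown(f^*_{\nu'}\circ\psi^{-1},F^*(\nu))$ is an actual direct extension of the deciding condition, rather than merely compatible with it. Making it precise requires unwinding how $\bar p$ is assembled by iterated one-step extensions and tracking which coordinates of the various Cohen-like functions get pinned by $\psi$ and by the intermediate objects $\pi_j$; this is the same kind of bookkeeping that underlies Lemma~\ref{maximize1}, and it leans on the closure facts $\Vdash_\nu$``$(\dot P_{\dot\beta_\nu/\nu},\leq^*)$ is $\nu^*$-closed'' and the $\nu^+$-closure of the Cohen quotients, exactly as used in the construction above.
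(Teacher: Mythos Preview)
Your core argument is correct and is exactly the paper's: the paper's entire proof consists of writing $r=r_\xi$, $\mu=\mu_\gamma$, and recording the two inequalities you isolate, namely $\Vdash f^*_{\nu'}\circ\psi^{-1}\le f^\mu_{\nu'}\le f^{\gamma,\xi}$ (the first from the defining property of $B^*$, the second because $f^\mu_{\nu'}$ is a lower bound of the family $\mathcal F$ containing $f^{\gamma,\xi}$) and $\Vdash F^*(\nu)_1=\dot q^*_\nu\le^*\dot q^{\gamma,\xi+1}$. Nothing more is written.

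Where you diverge is in your reading of the undefined symbol $r^*$. You take it to be the restriction of $\stem(\bar p)$, set $r_\xi=r^*$, and then claim in step~3 that $r^*{}^\frown(f^*_{\nu'}\circ\psi^{-1},F^*(\nu))$ is a \emph{direct extension} of $r^*_{\gamma,\xi}{}^\frown(f^{\gamma,\xi},\langle\dot P_{\dot\beta_\nu/\nu},\dot q^{\gamma,\xi+1}\rangle)$. But under your own identification the inequality on the lower part goes the wrong way: the construction gives $r^*_{\gamma,\xi}\le^* r_\xi=r^*$, not $r^*\le^* r^*_{\gamma,\xi}$. This is precisely the worry you flag in your ``main obstacle'' paragraph, and it is real under your reading; no amount of bookkeeping at lower levels will reverse that inequality. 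The paper sidesteps the issue entirely by (implicitly) letting the symbol $r^*$ in the claim stand for the $r^*_{\gamma,\xi}$ produced at stage $(\gamma,\xi)$ of the recursion. With that reading the lower part is literally the same on both conditions, and the two inequalities above immediately yield that the claimed condition directly extends the deciding one. No absorption argument, and none of the iterated tracking you anticipate, is needed.
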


\begin{proof}
Write $r=r_\xi$ and $\mu=\mu_\gamma$. Note that $\Vdash f_{\nu^\prime}^* \circ \psi^{-1} \leq f_{\nu^\prime}^\mu \leq f^{\gamma,\xi}$ and $\Vdash F^*(\nu)_1 =\dot{q}_\nu^* \leq^* \dot{q}^{\gamma,\xi+1}$, hence, we are done.
\end{proof}

For each $r$ such that $r \leq \stem(p_1+\vec{\pi})$ and $\vec{\tau}=\vec{\pi} \restriction d$, we indicate $\vec{\tau}=\vec{\tau}_r$.
For $i<3$, let $B_{r,i}=\{\psi \in B^* \mid r^* {}^\frown \langle f_{\nu^\prime}^* \circ \psi^{-1}, F^*(\psi(\alpha))\rangle) \Vdash \varphi_{\vec{\tau}_r, \psi \restriction d}^i\}$. For each $r$, let $i(r)<3$ be unique such that $B_{r,i(r)}$ is of measure-one. 
Let $B_\nu=\cap \{B_{r,i(r)} \mid r \in P_\nu * \dot{P}_{\dot{\beta}_\nu/\nu}\}$ and $B^{**}=\Delta_\nu B_\nu$. Let $A^{**}$ be the $d^*$-tree generated by $B^{**}$ and $F^{**}=F^* \restriction B^*$.
Let $p^*=\langle f_0^*,\vec{f}^*,A^{**},F^{**} \rangle$.

\begin{claim}
$p^*$ satisfies the Prikry property.
\end{claim}

\begin{proof}
Let $p^\prime \leq p^*$ with $p^\prime \parallel \varphi$, Assume $p^\prime \Vdash \varphi$ and the interpolant of $p^\prime$, $p^*$, say $\bar{p}$, is such that $\bar{p}=p^*+\vec{\mu}$ with the minimal $n^*=|\vec{\mu}|$. If $n^*=0$. then we might apply $p^\prime$ for the Prikry property instead. Assume $n^*>0$. 

For simplicity, we establish the case $n^*=2$. Say $\bar{p}=p^*+ \langle \pi,\psi \rangle$. Write $\tau=\pi \restriction d$, $\mu= \psi \restriction d$.
Let $$p^\prime=(g_0, \langle \dot{P}_{\dot{\beta}_0/\nu_0},\dot{q}_0\rangle) {}^\frown (g_1, \langle \dot{P}_{\dot{\beta}_1/\nu_1},\dot{q}_1\rangle) {}^\frown \tp(p^\prime).$$

Since $p$ satisfies Lemma \ref{maximize1}, we have that 
$$(g_0, \langle \dot{P}_{\dot{\beta}_0/\nu_0},\dot{q}_0\rangle) {}^\frown (g_1, \langle \dot{P}_{\dot{\beta}_1/\nu_1},\dot{q}_1\rangle) {}^\frown \tp(\bar{p}) \Vdash \varphi.$$

Set $r=(g_0, \langle \dot{P}_{\dot{\beta}_0/\nu_0},\dot{q}_0\rangle)$. Recall that $r^* {}^\frown \langle f_{\nu_0}^* \circ \psi^{-1},F^{**}(\nu_1)) \Vdash \varphi_{\tau,\mu}^i$ for a unique $i$, so $i(r)$ exists.
We claim that $i(r)=0$. Otherwise, we may assume $i_r=1$ (the case $i_r=2$ is similar). Let $G$ be generic containing $r^* {}^\frown \langle g_1,\langle \dot{P}_{\dot{\beta}_1/\nu_1},\dot{q}_1 \rangle)$. Then there is $t \in G$ such that $t {}^\frown \tp(\bar{p}) \Vdash \neg \varphi$, but if $t \leq r^* {}^\frown (g_1, \langle \dot{P}_{\dot{\beta}_1/\nu_1},\dot{q}_1 \rangle)$, we get a condition having contradictory decisions, which is a contradiction. 

We now show that $s^*=r^* {}^\frown (f_{\nu_0}^* \circ \psi^{-1},F^{**}(\nu_1)) \Vdash \varphi$. Suppose not. Let $s \leq s^*$ be such that $s \Vdash \neg \varphi$. Write $s=s_0 {}^\frown s_1 {}^\frown \vec{s}_2 {}^\frown \tp(s)$ where $s_0 \leq r^*$ and $s_0 {}^\frown s_1 \leq r^* {}^\frown (f_{\nu_0}^* \circ \psi^{-1}, F^{**}(\nu_1))$. Let $G$ be generic containing $s_0 {}^\frown s_1$, by the property of $i(r)$, let $s^\prime \in G$ be such that $s^\prime {}^\frown \tp(\bar{p}) \Vdash \varphi$, but then by extending $s^\prime$ (in $G$) if necessary, $s^\prime {}^\frown \vec{s}_2 {}^\frown \tp(s) \leq s, s^\prime {}^\frown \tp(\bar{p})$, so the condition forces both $\varphi$ and $\neg \varphi$, a contradiction.

Consider $t=r^* {}^\frown \tp(p^*+\langle \pi \rangle)$. Then the number of the block is $1$. We show that the condition forces $\varphi$. The point is for every extension of $t$ can be extended further to be an extension of $t+\langle \psi^\prime \rangle$, but since $\psi^\prime \in B_{r,0}$, then the condition will force $\varphi$. Hence, by a density argument, $t \Vdash \varphi$, but this contridicts the minimality of $n^*$.

\end{proof}

\end{proof}

Now we show that all cardinals are preserved. The forcing $P_\alpha$ is $\alpha^{++}$-c.c., so it preserves all cardinals greater than $\alpha^+$.
\begin{proposition}
\label{presbelow}
    For a cardinal $\beta<\alpha$ and a $P_\alpha$-name of a subset of $\beta$, $\Vdash_\alpha ``\dot{X} \in V^{P_\nu* \dot{P}_{\dot{\beta}_\nu/\nu}}"$ for some $\nu$. In particular, $P_\alpha$ preserves cardinals and cofinalities below $\alpha$.

\end{proposition}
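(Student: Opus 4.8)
The plan is to combine the factorization of $P_\alpha$ with the Prikry property and the chain condition, in the style of the standard argument showing that Prikry-type forcings at a measurable do not add bounded subsets beyond a fixed initial stage. First I would reduce to names for subsets of a fixed $\beta<\alpha$ and reduce, using the Prikry property (Theorem \ref{prikry1}), to understanding which ``blocks'' of a condition are relevant. Given a condition $p$ and a $P_\alpha$-name $\dot X$ for a subset of $\beta$, apply the Prikry property repeatedly: for each $\gamma<\beta$, the statement ``$\gamma\in\dot X$'' is decided by a direct extension, so one builds a $\leq^*$-decreasing sequence of length $\beta<\alpha$ and, by Proposition \ref{closure1} (the pure part is $\alpha$-closed) together with closure of the interleaving parts, obtains a single $p^*\leq^* p$ deciding $\gamma\in\dot X$ for every $\gamma<\beta$ along pure/stem extensions of $p^*$. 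The key point is that after this, the value of $\dot X$ depends only on which $\vec\mu\in A^{p^*}$ gets added and on the generic for the part below, not on the top Cohen-like data.

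Next I would exploit the tree structure to bound the ``critical level'' $\nu$. For $p^*$ as above with $\tp(p^*)=\langle g_0,\vec g,A,F\rangle$, and for each one-step extension $p^*+\langle\mu\rangle$ with $\mu(\alpha)=\nu$, the condition $p^*+\langle\mu\rangle$ has the form $(\text{stem})^\frown r_0 {}^\frown r_1$ where $r_0=(g_0\circ\mu^{-1},F(\nu))$ lands in $P_\nu*\dot P_{\dot\beta_\nu/\nu}$ and $r_1$ is again essentially pure above $\nu$. Using the Prikry property and $\alpha^{++}$-c.c.\ (Proposition \ref{cc1}) one argues that the decision of each ``$\gamma\in\dot X$'' below such an extension is in fact forced from the $r_0$-part, i.e.\ from $P_\nu*\dot P_{\dot\beta_\nu/\nu}$, because any further information would have to come either from the pure part above $\nu$ (which is $\nu^+$-closed-ish / highly closed and adds no new subsets of $\beta<\nu$) or from further one-step extensions (which only add ordinals, and by coherence live below objects $\tau$ with $\tau(\alpha)$ large, contributing nothing to a subset of $\beta$). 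So one fixes $\nu\in A(\alpha)$ large enough that $\beta<\nu$ and shows $\Vdash_\alpha ``\dot X\in V^{P_\nu*\dot P_{\dot\beta_\nu/\nu}}"$; more precisely, one produces from $p^*$ and a measure-one set of $\mu$'s a single $P_\nu*\dot P_{\dot\beta_\nu/\nu}$-name whose interpretation must equal $\dot X[G]$ in any generic $G$ extending (a further direct extension of) $p^*$, and a density argument promotes this to a $\Vdash_\alpha$ statement.

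For the ``in particular'' clause: if $\beta<\alpha$ is a $V$-cardinal (resp.\ a regular cardinal, or has a given cofinality) then since $\dot X$ ranges over subsets of $\beta$ and every such name reduces to a $P_\nu*\dot P_{\dot\beta_\nu/\nu}$-name for some $\nu$, the preservation of cardinals and cofinalities below $\alpha$ follows from the corresponding fact for the forcings $P_\nu*\dot P_{\dot\beta_\nu/\nu}$, which is part of the induction scheme (Proposition \ref{indsch}(\ref{3}), (\ref{5}), (\ref{6})) applied at the level $\nu<\alpha$: those forcings preserve cardinals and cofinalities below $\nu$, hence below $\beta$, and collectively (as $\nu$ ranges over $A(\alpha)$) this covers all potential collapses a generic for $P_\alpha$ could introduce below $\alpha$.

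The main obstacle I expect is the second step: cleanly arguing that no new bounded subset of $\beta$ ``leaks'' from the top part $\tp(p^*)$ or from the act of adding further objects $\mu'$ above $\mu$. This requires carefully using that the Cohen-like functions $f_\nu$ in the sequence $\vec f$ are names with respect to $P_\nu*\dot P_{\dot\beta_\nu/\nu}$ (so that once $\mu$ with $\mu(\alpha)=\nu$ is added, $g_0$ becomes such a name, contributing nothing new over that model), and that the ``pure tail'' above $\nu$ is sufficiently closed — one will want something like: the pure part above $\nu$, as a forcing, is $\nu^+$-closed (an analogue of Proposition \ref{closure1} relativized above $\nu$), so it adds no subsets of $\beta<\nu$. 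Assembling these into a single clean reduction, while juggling the interleaving quotients $\dot P_{\dot\beta_\nu/\nu}$ and the coherence assumptions on the trees, is where the real work lies; the chain condition and Prikry property are then used as black boxes.
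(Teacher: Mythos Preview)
Your proposal is workable but takes a longer and harder route than the paper. The paper's proof is essentially three lines: starting from an impure $p$ with $\stem(p)\in Q:=P_{\alpha_{n-1}}*\dot P_{\dot\beta_{n-1}/\alpha_{n-1}}$, one finds $p^*\leq^* p$ with $\stem(p^*)=\stem(p)$ such that for every $\gamma<\beta$ and every $s\leq\stem(p)$ in $Q$ there is $s^*\leq^* s$ (still in $Q$) with $s^*{}^\frown\tp(p^*)\parallel``\gamma\in\dot X"$. This $p^*$ comes from iterating the construction of Lemma~\ref{maximize1} once for each $\gamma<\beta$ and taking a lower bound using only the $\alpha$-closure of the \emph{top part} (Proposition~\ref{closure1}); the stem never moves, so the weak closure of the stem's forcing is irrelevant. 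One then sets $\dot X'[G_Q]=\{\gamma:\exists s\in G_Q\,(s{}^\frown\tp(p^*)\Vdash\gamma\in\dot X)\}$ and checks $p^*\Vdash\dot X=\dot X'$. The $\nu$ in the statement is simply $\alpha_{n-1}$, the top of the existing stem; no objects $\mu$ are added, and no analysis of a ``tail above $\nu$'' is needed.

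Your second paragraph is therefore superfluous, and the obstacle you flag in the last paragraph---showing that the top part above a newly added $\mu$ contributes no bounded subsets---is a problem the paper simply does not face. There is also a closure pitfall in your first step as you phrase it: if you build a $\leq^*$-decreasing sequence of length $\beta$ while allowing the stem and interleaving parts to move, the bottom block lives in $C(\alpha_0^+,\alpha_0^{++})$, which is only $\alpha_0^+$-closed and may well have $\alpha_0<\beta$. The fix is exactly the paper's move: freeze the stem and iterate only on $\tp$.
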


\begin{proof}
    Let $p=\stem(p) {}^\frown \tp(p)$ where $\stem(p) \in Q$. Let $\dot{X}$ be a name of a subset of $\beta$. Find $p^* \leq^* p$ such that $\stem(p^*)=\stem(p)$ and for each $\gamma<\beta$, each $s \leq \stem(p)$, there is $s^* \leq^*s$ with $s^* {}^\frown \tp(p^*)$ decides $``\gamma \in \dot{X}"$. Let $\dot{X}^\prime$ be a $Q$-name such that if $G$ is a $Q$-generic, $\dot{X}^\prime[G]=\{\alpha \mid \exists s \in G(s {}^\frown \tp(p^*) \Vdash \alpha \in \dot{X})\}$. Then $p^* \Vdash \dot{X}=\dot{X}^\prime$.
\end{proof}

The forcing singularizes $\alpha$ to have cofinality $\omega$, and add $\alpha^{++}$ subsets of $\alpha$: for $\gamma \in [\alpha,\alpha^{++})$, define $t_\gamma: \omega \to \alpha$ as the following. By a density argument, let $p \in G$ be such that the common domain contains $\gamma$ and for $\mu$ appearing in $A^p$, $\gamma \in \dom(\gamma)$. Assume that $n^p$ is the number of the blocks in $p \setminus \tp(p)$. For $n>n^p$, find any $p^\gamma \in G$ such that the number of blocks in $p^\gamma \setminus \tp(p^\gamma) \geq n$. Write
$$p^\gamma= s_0 {}^\frown \cdots {}^\frown s_{n-2} {}^\frown (f_{n-1},s^\prime_{n-1}) {}^\frown \cdots {}^\frown (f_{k-1}, s^\prime_{n-1}) {}^\frown \langle f,\vec{f},A,F \rangle.$$
By compatibility between $p^\gamma$ and $p$, we have that $f(\gamma)$ has to be of the form $\check{\xi}_0$, $\xi_0 \in \dom(f_{n-1})$, $f_{n-1}(\xi_0)=\check{\xi}_1$, and so on. Define $t_\gamma(n)=f_{n-1} \circ \cdots \circ f_{k-1} \circ f(\gamma)$. 
Clearly $t_\alpha$ gives a cofinal sequence of $\alpha$ of length $\omega$, and hence, $\alpha$ is singularized to have cofinality $\omega$. Again, by a standard argument with the Prikry property, $\alpha^+$ is preserved. Since the forcing is $\alpha^{++}$-c.c., all the cardinals are preserved.
One can show that for $\gamma<\gamma^\prime$, there is $p \in G$ such that for every relevant object $\mu$ appearing in the tree part, $\gamma,\gamma^\prime \in \dom(\mu)$. Note that such $\mu$ is order-preserving. From here, use a density argument to show that $t_\gamma <^* t_{\gamma^\prime}$. Hence, the forcing violates the SCH at $\alpha$.

The set $C_\alpha$ is derived from the generic object as the following. If $G$ is $P_\alpha$-generic, define $C^\prime=\rng(t_\alpha)\cup\{\alpha\}$. Each condition $p \in G$ is of the form $$s {}^\frown \langle f,\vec{f},A,F\rangle$$ where $\nu_k=t_\alpha(k+1)$. In this case, the forcing $\dot{P}_{\dot{\xi}_k/\nu_k}$ also derives the set $C^k=C_{\xi_k/\nu_k} \subseteq [\nu_k,\xi_k)$, where $t_\alpha(k+1)=\nu_k<\xi_k<\nu_{k+1}=t_\alpha(k+2)$. Let $C_\alpha=C^\prime \cup \cup_{k<\omega} C^k$. Then $C_\alpha \subseteq \alpha+1$, $\max(C_\alpha)=\alpha$, $C_\alpha \setminus \{\alpha\}$ is a cofinal subset of $\alpha$, containing a subset of order-type $\omega$.
So far, we have verified items (\ref{1}) through (\ref{3}) of Proposition \ref{indsch}.

\begin{definition}[The quotient forcing]
Let $\dot{P}_{\alpha/\alpha}$ be the $P_\alpha$-name of the trivial forcing $(\{\emptyset\},\leq, \leq^*)$. In $V^{P_{\alpha}}$, let $\dot{C}_{\alpha/\alpha}$ be the $\dot{P}_{\alpha/\alpha}$-name of the empty set.
Now assume that $\beta<\alpha$.
Define $\dot{P}_{\alpha/\beta}$ as the following.
Let $G$ be $P_\beta$-generic. Define $\dot{P}_\alpha[G]=P_{\alpha/\beta}[G]$ as the forcing consisting of conditions of the form

$p=(\langle P_{\beta^\prime}[G],q^\prime) {}^\frown (\langle f_0 \rangle, \langle \dot{P}_{\dot{\beta}_0/\alpha_0}[G],\dot{q}_0\rangle)\cdots {}^\frown (\langle f_{n-1} \rangle, \langle \dot{P}_{\dot{\beta}_{n-1}/\alpha_{n-1}}[G],\dot{q}_{n-1}\rangle) {}^\frown \langle g_0,\vec{g},A,F \rangle$ where $n\geq 0$ and

\begin{enumerate}
    \item $\beta \leq \beta^\prime<\alpha$, so $P_{\beta^\prime}[G]$ was already defined by recursion, which is just $P_{\beta^\prime/\beta}[G]$, $q_0 \in P_{\beta^\prime}[G]$.
    \item If $n>0$, then $\alpha_0<\cdots<\alpha_{n-1}$, and for $i<n$, 
    \begin{itemize}
        \item let $d_i=\dom(f_i)$, then $d_i$ is an $\alpha_i$-domain, $d_i \in V$.
        \item for $\zeta \in d_0$, $\Vdash_{P_{\beta^\prime}[G]}``f_0(\zeta)<\alpha_0"$, and if $i>0$, then for $\zeta \in d_i$, $\Vdash_{P_{\alpha_{i-1}}[G]*\dot{P}_{\dot{\beta}_{i-1}/\alpha_{i-1}}[G]}``f_i(\zeta)<\alpha_i"$.
        \item $\Vdash_{P_{\alpha_i}[G]}``\alpha_i\leq \dot{\beta}_i<\alpha_{i+1}"$, where $\alpha_n=\alpha$.
        \item $\Vdash_{P_{\alpha_i}[G]}``\dot{q}_i \in \dot{P}_{\dot{\beta}_i/\alpha_i}[G]"$.
    \end{itemize}
     \item $A$ is a $E(d)$-tree.
    \item $d \in [\alpha^{++}]^{\leq \alpha}$, $d \in V$, is the {\em common domain} for $p$, i.e. $\dom(g_0)=d$, and $\vec{g}=\langle g_\nu \mid \nu \in A(\alpha)\rangle$ and for each $\nu$, $\dom(g_\nu)=d$.
    \item Fix $\zeta \in d$. If $n=0$, then $\Vdash_{P_{\beta^\prime}[G]}``g_0(\zeta)<\alpha"$, otherwise, $\Vdash_{P_{\alpha_{n-1}}[G]*P_{\dot{\beta}_{n-1}/\alpha_{n-1}}[G]} ``g_0(\zeta)<\alpha"$.
    \item for $\nu  \in A(\alpha)$ and $\zeta \in d$, $\Vdash_{P_\nu[G]*\dot{P}_{\dot{\beta}_\nu/\nu}[G]} ``g_\nu(\zeta)<\alpha"$.
    \item $\dom(F)=A(\alpha)$.
    \item for $\nu \in \dom(F)$, $F(\nu)=\langle \dot{P}_{\dot{\beta}_\nu/\nu}[G],\dot{q} \rangle$, where $\Vdash_{P_\nu[G]}``\nu \leq \dot{\beta}_\nu[G]<\alpha, \dot{q} \in \dot{P}_{\dot{\beta}_\nu/\nu}[G]"$
\end{enumerate}
Back in $V$. If $\dot{p} \in \dot{P}_{\alpha/\beta}$, then by density, the collection of $p_0 \in P_\beta$ such that $p_0$ decides $n$, $\alpha_0, \cdots, \alpha_{n-1}$, $\dom(f_0), \cdots, \dom(f_{n-1})$, the common domain, $A$, $q^\prime$ (as the equivalent $\dot{P}_{\dot{\beta}^\prime/\beta}$-name, and so on), is open dense. In this case, we say that $p_0$ {\em interprets} $\dot{p}$.
All in all, for such $p_0$ which interprets all the relevant components of $\dot{p}$, let $p_1$ be such the interpretation.
Write $p_0$ as $r_0 {}^\frown \langle g \rangle$ and by the interpretation, we may write $$p_1=(\langle \dot{P}_{\beta^\prime/\beta},\dot{q}^\prime) {}^\frown (\langle f_0 \rangle, \langle \dot{P}_{\dot{\beta}_0/\alpha_0},\dot{q}_0\rangle)\cdots {}^\frown (\langle f_{n-1} \rangle, \langle \dot{P}_{\dot{\beta}_{n-1}/\alpha_{n-1}},\dot{q}_{n-1}\rangle) {}^\frown \langle g_0, \vec{g},A,F \rangle.$$
There is a natural concatenation $p_0$ with $p_1$, written by $p_0{}^\frown p_1$, which is 
$$r=r_0 {}^\frown (\langle g \rangle, \langle \dot{P}_{\beta^\prime/\beta},\dot{q}^\prime \rangle) {}^\frown \cdots {}^\frown (\langle f_{n-1} \rangle, \langle \dot{P}_{\dot{\beta}_{n-1}/\alpha_{n-1}},\dot{q}_{n-1}\rangle) {}^\frown \langle g_0,\vec{g},A,F \rangle.$$
Then $r \in P_\alpha$ with $r \restriction P_\beta$ exists. We denote $p_1$ by $r/P_\beta$.
For $p_0$ and $p_1$ in $\dot{P}_{\alpha/\beta}$, we say that $p_0 \leq p_1$ if there is $p \in G^{P_\beta}$ such that $p$ interprets $p_0$ and $p_1$, and $p {}^\frown p_0 \leq_\alpha p {}^\frown p_1$. Also define $p_0 \leq^* p_1$ if there is $p \in G^{P_\beta}$ such that $p$ interprets $p_0$ and $p_1$, and $p {}^\frown p_0 \leq^*_\alpha p{}^\frown p_1$. One can check that the map $\phi: \{p \in P_\alpha \mid p \restriction P_\beta$ exists$\} \to P_\beta* \dot{P}_{\alpha/\beta}$ defined by
$\phi(p)=(p \restriction P_\beta, p / P_\beta)$
is a dense embedding, where $p \setminus P_\beta$ is the obvious component of $p$ which is in $\dot{P}_{\alpha/\beta}$.
Note that if $G$ is $P_\beta$-generic and $H$ is $P_\alpha[G]$-generic, there is a generic $I$ for $P_\alpha$ such that $V[G*H]=V[I]$, where $I$ is generated by $\{p \mid p \restriction P_\beta$ exists, $p \restriction P_\beta \in G$ and $(p /P_\beta)[G] \in H\}$. If $I$ is $P_\alpha$-generic and for some $ p\in I$, $p \restriction P_\beta$ exists, we can get $G$ which is $P_\beta$-generic and $H$ which is $P_\alpha[G]$-generic such that $V[G*H]=V[I]$ where $G$ is generated by $\{p \restriction P_\beta \mid p \in I$ and $p \restriction P_\beta$ exists$\}$ and $H=\{(p / P_\beta)[G] \mid p \in I$ and $p \restriction P_\beta$ exists$\}$.

In $V^{P_\beta}$, let $\dot{C}_{\alpha/\beta}$ be a $\dot{P}_{\alpha/\beta}$-name of the set described as the following. Let $G$ be $P_\beta$-generic.
and $H$ be generic over $P_\alpha[G]=\dot{P}_{\alpha/\beta}[G]$.
Then let $I=G*H$ be $P_\alpha$-generic. $I$ derives the set $C_\alpha \subseteq \alpha+1$ and $G$ derives the set $C_\beta \subseteq \beta+1$. Let $C_{\alpha/\beta}=C_\alpha \setminus C_\beta$.

\end{definition}

The following have the same proof as for $P_\alpha$ essentially. The one that we would like to point out is the closure property.

\begin{proposition}
\label{quotientprop}
    \begin{itemize}
        \item $\Vdash_\beta ``(\dot{P}_{\alpha/\beta},\leq^*)$ is $\alpha^{++}$-c.c."
        \item $\Vdash_\beta ``(\dot{P}_{\alpha/\beta},\leq, \leq^*)$ has the Prikry property.
        \item $\Vdash_\beta ``(\dot{P}_{\alpha/\beta},\leq^*)$ is $\beta^*$-closed", where $\beta^*$ is the least inaccessible cardinal greater than $\beta$.
    \end{itemize}

    \begin{proof}
        We only proof item $(3)$. For simplicity, let $\beta^\prime<\beta^*$ and in $V^{P_\beta}$, let $\langle p_\gamma \mid \gamma<\beta^\prime \rangle$ be a $\leq^*$-decreasing sequence.
        For simplicity, we consider the case where $p_\gamma=\langle P_\xi[G],q^\gamma \rangle {}^\frown \langle g^\gamma_0,\vec{g}^\gamma,A^\gamma,F^\gamma \rangle$ with the common domain $d^\gamma$.
        Since $(P_\xi[G],\leq^*)$ is $\beta^*$-closed, let $q^*$ be a $\leq^*$-lower bound of $q^\gamma$. In $V$, let $d^*=\cup \{d \mid \exists \gamma \exists p \in P_\beta(p \Vdash_\beta \dot{d}_\gamma=d)\}$. For all $\beta$ (including 0) with $g_\beta^\gamma$ exists, let $\dom(g_\beta^*)=d^*$, and for $\zeta \in d$, $g_\beta^*(\zeta)$ is forced to be the same as the interpretation $g_\beta^*(\zeta)$ for some sufficiently large $\gamma$, if exists, otherwise, $g_\beta^\gamma(\zeta)=\check{0}$. Let $A^*=\cap_{\gamma} \cap_{p} \{A \mid A \text{ is the pullback of } A^{\gamma,p}$ to the $d^*$-tree$\}$ where $p\Vdash_\beta ``\dot{A}^\gamma=A^{\gamma,p}"$. By shrinking, assume $\min(A^*(\alpha))>\beta$.
        Finally, for each $\gamma \in A^*(\alpha)$, the forcing which is relevant to $F^\gamma(\alpha)$ (for any $\gamma)$ is greater than $\gamma$-closed in the direct extension, and $\gamma>\beta$, so we can find $F^*$ such that $\langle P_\xi[G],q^*) {}^\frown \langle g^*,\vec{g}^*,A^*,F^* \rangle$ is a $\leq^*$-lower bound of $\langle p_\gamma \mid \gamma<\beta^\prime \rangle$. 
    \end{proof}
\end{proposition}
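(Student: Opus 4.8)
The plan is to treat the three items in increasing order of difficulty, the first two being near-verbatim repetitions of proofs already given. Throughout I fix a $P_\beta$-generic $G$ and argue inside $V[G]$; recall that by the induction scheme $P_\beta$ forces GCH and $|P_\beta|\le\alpha$, so $\alpha$ stays inaccessible and cardinal arithmetic at and below $\alpha$ is unchanged, and that via the dense embedding of item (\ref{5}) together with the double-quotient identification of item (\ref{7}), every closure/Prikry statement about the quotients strictly below $\alpha$ is available inside $V[G]$.

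For the $\alpha^{++}$-c.c.\ I would rerun the $\Delta$-system argument of Proposition \ref{cc1} in $V[G]$: given $\alpha^{++}$ conditions of $\dot P_{\alpha/\beta}[G]$, first thin out so that they share a common stem-shape (the same number $n$ of blocks, the same ordinals $\alpha_0<\cdots<\alpha_{n-1}$, the same numerator ordinals and the same $A(\alpha)$, and a fixed bottom component $\langle P_{\beta'}[G],q'\rangle$), then apply a $\Delta$-system to the array of values $f_0^\gamma(\zeta)$, $g_\nu^\gamma(\zeta)$, $F^\gamma(\nu)$, all of which are names living in $V_\alpha$; any two conditions in the refined family then have a common $\leq^*$-lower bound, so the same argument even gives the $\alpha^{++}$-c.c.\ for $\leq^*$. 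For the Prikry property I would reprove Lemma \ref{maximize1} and then Theorem \ref{prikry1} verbatim inside $V[G]$: the elementary-submodel argument goes through with $N\prec H_\theta^{V[G]}$, ${}^{<\alpha}N\subseteq N$, $|N|=\alpha$, $V_\alpha\cup\{d\}\subseteq N$ (available since $\alpha^{<\alpha}=\alpha$ in $V[G]$), the auxiliary forcing $\mathbb{A}$ of that proof remains $\alpha^+$-closed in $V[G]$ because it is a pure Cohen-like poset, and the diagonalisations over dense subsets of $\mathbb{A}$ in $N$ and over the forcings $P_\nu[G]*\dot P_{\dot\beta_\nu/\nu}[G]$ use only the closure and Prikry property of the quotients strictly below $\alpha$.

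The substantial item is the $\beta^*$-closure of $(\dot P_{\alpha/\beta},\leq^*)$, which is the only one the paper proves. Let $\beta'<\beta^*$ and let $\langle p_\gamma\mid\gamma<\beta'\rangle$ be $\leq^*$-decreasing in $\dot P_{\alpha/\beta}[G]$. Since $\leq^*$ preserves the stem-shape I may assume all $p_\gamma$ share the same $n$, the same $\alpha_i$'s and the same bottom numerator; write $p_\gamma=(\langle P_{\beta'_0}[G],q^\gamma\rangle){}^\frown\cdots{}^\frown\langle g_0^\gamma,\vec g^\gamma,A^\gamma,F^\gamma\rangle$ with common domain $d^\gamma$. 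I build the lower bound coordinate by coordinate, from the bottom up. First, the bottom components form a $\leq^*$-decreasing sequence in $P_{\beta'_0}[G]=\dot P_{\beta'_0/\beta}[G]$, which by the induction hypothesis (item (\ref{5})) is $\beta^*$-closed in $\leq^*$; take a lower bound $q^*$. Next, since each $d^\gamma$ must lie in $V$ it is decided by a maximal antichain of $P_\beta$, so $d^*:=\bigcup\{d\in V\mid\exists\gamma\,\exists p\in P_\beta\,(p\Vdash_\beta\dot d^\gamma=d)\}$ is definable in $V$, hence in $V$, and being a union of fewer than $\alpha$ many $\alpha$-domains (as $\beta'<\beta^*\le\alpha$ and $|P_\beta|\le\alpha$) is itself an $\alpha$-domain. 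Likewise, for each $\gamma$ and each $p\in P_\beta$ deciding $\dot A^\gamma$ as some $A^{\gamma,p}$, pull $A^{\gamma,p}$ back to a $d^*$-tree, and let $A^*$ be the intersection of all of these; since $E(d^*)$ is $\alpha$-complete this is again a $d^*$-tree, lying in $V$, which I shrink so that $\min(A^*(\alpha))>\beta'$. Finally, by recursion on $\nu\in A^*(\alpha)$ (and separately at the index $0$) I choose $g_\nu^*$ with domain $d^*$ which over $P_\nu[G]*\dot P_{\dot\beta_\nu/\nu}[G]$ is forced to extend $g_\nu^\gamma$ for all sufficiently large $\gamma$ — possible because $\dot C(\nu^+,\nu^{++})$ there is $\nu^+$-closed and $\nu>\beta'$ — and $F^*(\nu)=\langle\dot P_{\dot\beta_\nu/\nu},\dot q^*_\nu\rangle$ with $\dot q^*_\nu$ forced to be a $\leq^*$-lower bound of the $F^\gamma(\nu)_1$ — possible because $\Vdash_\nu``(\dot P_{\dot\beta_\nu/\nu},\leq^*)$ is $\nu^*$-closed$"$ and $\nu^*>\nu>\beta'$. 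Then $p^*=(\langle P_{\beta'_0}[G],q^*\rangle){}^\frown\cdots{}^\frown\langle g_0^*,\vec g^*,A^*,F^*\rangle$ is a legitimate condition of $\dot P_{\alpha/\beta}[G]$ and is $\leq^*$-below every $p_\gamma$.

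I expect the main obstacle to be the bookkeeping in the third item. One has to check at each coordinate that the object being amalgamated lives over a forcing closed enough in the relevant order — $\leq$ for the Cohen parts, $\leq^*$ for the interleaving names and the bottom numerator — that these closures are genuinely available in $V[G]$ after correctly unwinding the double-quotient identification, and, most delicately, that the amalgamated domain $d^*$ and tree $A^*$ really lie in $V$ (this is why one unions, resp.\ intersects, over all conditions of $P_\beta$ deciding the relevant component rather than just over the index $\gamma$), so that $p^*$ is an honest condition rather than a $V[G]$-object of the right shape. For impure conditions with more than one block the argument is identical with heavier notation: one applies the above at each block $i$, using that $\dot P_{\dot\beta_i/\alpha_i}$ is $\alpha_i^*$-closed in $\leq^*$ with $\alpha_i\ge\beta^*>\beta'$, and amalgamates the intermediate Cohen functions $f_i^\gamma$ exactly like the top ones.
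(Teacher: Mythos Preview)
Your proposal is correct and follows essentially the same approach as the paper's proof, which likewise only treats item (3) in detail and builds the lower bound coordinate-by-coordinate using the $\leq^*$-closure of the bottom quotient, the ground-model union $d^*$ over all $(\gamma,p)$ with $p$ deciding $\dot d^\gamma$, the intersection $A^*$ of the pulled-back trees, and the closure of the interleaving forcings at each $\nu\in A^*(\alpha)$. Your explicit emphasis on why $d^*$ and $A^*$ must be taken over \emph{all} deciding $p\in P_\beta$ (so that they genuinely lie in $V$) is exactly the delicate point the paper's construction encodes but does not spell out.
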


With all the definitions, one can verify the rest of Proposition \ref{indsch}.

\section{The general levels}
\label{generallevel}

Let $\alpha<\kappa$ be inaccessible. We may assume that $\alpha$ is greater than the first $\beta$ with $\circ(\beta)=1$. This forcing will generalize all of the forcings in previous sections.

\begin{definition}

A condition in $P_\alpha$ is of the form

$$p=\stem(p) {}^\frown \tp(p).$$

We have two cases.

\begin{enumerate}
    \item $\stem(p)$ is empty. In this case, $p$ is said to be {\em pure}.
    \item $\stem(p)$ is non-empty. In this case, $p$ is said to be {\em impure}. Then $\stem(p)$ is of the form 

    $$(s_0, \langle \dot{P}_{\dot{\beta}_0/\alpha_0},\dot{q}_0\rangle) {}^\frown \cdots {}^\frown (s_{n-1}, \langle \dot{P}_{\dot{\beta}_{n-1}/\alpha_{n-1}},\dot{q}_{n-1}\rangle),$$

    for some $n>0$. We say that the number of blocks in $\stem(p)$ is $n$. We have that
    \begin{itemize}
        \item $\alpha_0<\cdots<\alpha_{n-1}<\alpha$.
        \item for all $i$, $\Vdash_{\alpha_i} ``\alpha_i \leq \dot{\beta}_i<\alpha_{i+1}"$, where $\alpha_n=\alpha$.
        \item $(s_0, \langle \dot{P}_{\dot{\beta}_0/\alpha_0},\dot{q}_0\rangle) {}^\frown \cdots {}^\frown s_{n-1} \in P_{\alpha_{n-1}}$.
        \item $\Vdash_{\alpha_{n-1}} ``\dot{q}_{n-1} \in \dot{P}_{\dot{\beta}_{n-1}/\alpha_{n-1}}"$.
    \end{itemize}
    Equivalently, $\stem(p) \in P_{\alpha_{n-1}}*\dot{P}_{\dot{\beta}_{n-1}/\alpha_{n-1}}$.

\end{enumerate}

$\tp(p)$ also depends on $\stem(p)$ and $\alpha$. We have several cases.

\begin{enumerate}
    \item The case where $p$ is pure.
    \begin{enumerate}
        \item $\circ(\alpha)=0$. Then $\tp(p)=\langle f \rangle$, $f \in C(\alpha^+,\alpha^{++})$.
        \item $\circ(\alpha)>0$. In this case, $\tp(p)=\langle f_0,\vec{f},A,F \rangle$, where
        \begin{itemize}
            \item $A$ is a $d$-tree, with respect to $\vec{E}_\alpha(d)$.
            \item $\dom(F)=A(\alpha)$.
            \item for $\nu \in \dom(F)$, $F(\nu)=\langle \dot{P}_{\dot{\beta}_\nu/\nu},\dot{q} \rangle$ where $\Vdash_\nu ``\nu \leq \dot{\beta}_\nu<\alpha \text{ and } \dot{q} \in \dot{P}_{\dot{\beta}_\nu/\nu}"$.
            \item $\vec{f}=\langle f_\nu \mid \nu \in A(\alpha) \rangle$.
            \item there is a {\em common domain} $d$, which is an $\alpha$-domain, $\dom(f_0)=d$ and for all $\beta$, $\dom(f_\beta)=d$.
            \item $f \in C(\alpha^+,\alpha^{++})$ and for each $\nu \in A(\alpha)$, and $\zeta \in d$, $\Vdash_{P_\nu * \dot{P}_{\dot{\beta}_\nu/\nu}}``f_\nu(\zeta)<\alpha"$.
        \end{itemize}
        \end{enumerate}
        \item The case where $p$ is impure, say $\stem(p)\in P_{\alpha_{n-1}} * \dot{P}_{\dot{\beta}_{n-1}/\alpha_{n-1}}=:Q$.
        \begin{enumerate}
            \item $\circ(\alpha)=0$. Then $\tp(p)=\langle f \rangle$, $\dom(f)=d \in V$ is an $\alpha$-domain and for $\zeta \in d$, $\Vdash_Q ``f(\zeta)<\alpha"$.
            \item $\circ(\alpha)>0$. In this case, $\tp(p)=\langle f_0,\vec{f},A,F \rangle$, where there is a {\em common domain} $d \in [\alpha^{++}]^{\leq \alpha}$, $d \in V$, $d$ is an $\alpha$-domain such that
        \begin{itemize}
            \item $A$ is a $d$-tree, with respect to $\vec{E}_\alpha(d)$, $\min(A(\alpha))>\sup\{\gamma \mid \exists r \in P_{\alpha_{n-1}} (r \Vdash \dot{\beta}_{n-1}=\gamma)\}$.
            \item $\dom(F)=A(\alpha)$.
            \item for $\nu \in \dom(F)$, $F(\nu)=\langle \dot{P}_{\dot{\beta}_\nu/\nu},\dot{q} \rangle$ where $\Vdash_\nu ``\nu \leq \dot{\beta}_\nu<\alpha \text{ and } \dot{q} \in \dot{P}_{\dot{\beta}_\nu/\nu}"$.
            \item $\vec{f}=\langle f_\nu \mid \nu \in A(\alpha)\rangle$.
        
            \item $\dom(f_0)=d$ and for all $\nu$, $\dom(f_\nu)=d$.
            \item for $\zeta \in d$, $\Vdash_Q ``f_0(\zeta)<\alpha"$.
            \item for $\nu \in A(\alpha)$ and $\zeta \in d$, $\Vdash_{P_\nu*\dot{P}_{\dot{\beta}_\nu/\nu}}``f_\beta(\zeta)<\alpha"$.
        
        \end{itemize}
        \end{enumerate}
    
\end{enumerate}
\end{definition}

\begin{definition}[The one-step extension]
Assume $\circ(\alpha)>0$. Let $p=\stem(p) {}^\frown \langle f_0,\vec{f},A,F \rangle$ with the common domain $d$. Let $\langle \mu \rangle \in \Lev_0(A)$ with $\mu(\alpha)=\nu$. The {\em one-step extension of $p$ by $\mu$}, denoted by $p+\langle \mu \rangle$, is the condition $p^\prime=\stem(p^\prime) {}^\frown \langle g_0,\vec{g},A^\prime,F^\prime \rangle$ such that
\begin{enumerate}
    \item if $\circ(\nu)=0$, then $\stem(p^\prime)=\stem(p){}^\frown (f_0 \circ \mu^{-1},F(\nu))$, where $\dom(f_0 \circ \mu^{-1})=\rng(\mu)$, for $\gamma \in \dom(\mu)$, $f_0\circ \mu^{-1}(\mu(\gamma))=f_0(\gamma)$.

    \item if $\circ(\mu(\alpha))>0$, then $\stem(p^\prime)=\stem(p){}^\frown (\langle f_0 \circ \mu^{-1}, \langle f_\nu \circ \mu^{-1} \mid \nu \in (A \downarrow \mu)(\nu), A \downarrow \mu$, $F^\prime \rangle,F(\mu(\alpha)))$, where $\dom(F^\prime)=(A \downarrow \mu)(\nu)$, and for $\xi$, $F^\prime(\xi)=F(\xi)$(recall that $A \downarrow \mu=\{\vec{\tau} \circ \mu^{-1} \mid \vec{\tau}<\mu$ and for all $i$, $\circ(\tau_i(\alpha))<\circ(\mu(\alpha))$, so $(A \downarrow \mu)(\nu) \subseteq A(\alpha) \cap \nu)$.

    \item Write $Q$ as the forcing in which $\stem(p^\prime)$ lives. Say $Q=P_{\nu}*\dot{P}_{\dot{\beta}_\nu/\nu}$. Then
    \begin{itemize}
    \item $\Vdash_Q ``g_0=f_{\nu} \oplus \mu"$, namely $\dom(g_0)=d$, for $\zeta \in \dom(\mu)$, $g_0(\zeta)=\mu(\zeta)$, and for the other $\zeta$, $g_0(\zeta)=f_{\nu}(\zeta)$
    \item $\vec{g}=\{g_{\beta^\prime} \mid \beta^\prime \in \{\vec{\tau} \in A_{\langle \mu \rangle} \mid \tau_0(\alpha)>\xi^*\}$, where $\xi^*=\sup\{\gamma \mid \exists r \in P_\nu(r \Vdash_{\mu(\alpha)}\dot{\beta}_{\nu}=\gamma)\}$.
    \item $A^\prime=\{\vec{\tau} \in A_{\langle \mu \rangle} \mid \tau_0(\alpha)>\xi^*\}$.
    \item $F^\prime=F \restriction (A^\prime(\alpha))$.
    \end{itemize}
\end{enumerate}
We define $p+\langle \rangle$ as $p$, and by recursion, define $p+\langle \mu_0,\cdots, \mu_n \rangle=(p+\langle \mu_0,\cdots, \mu_{n-1} \rangle)+\langle \mu_n \rangle$.

\end{definition}

\begin{definition}[The direct extension relation]
Let $p=\stem(p) {}^\frown \tp(p)$ and $p^{\prime}=\stem(p^\prime) {}^\frown \tp(p^\prime)$. We say that $p$ is a {\em direct extension of} $p^\prime$, denoted by $p \leq^*_\alpha p^\prime$, if the following hold.

\begin{enumerate}
    \item $\stem(p) \leq^* \stem(p^\prime)$ (in some $Q:=P_{\alpha^\prime}*\dot{P}_{\dot{\beta}^\prime/\alpha^\prime})$.
    \item If $\circ(\alpha)=0$, write $\tp(p)=\langle f \rangle$ and $\tp(p^\prime)=\langle g \rangle$, then $\dom(f) \supseteq \dom(g)$, and for $\zeta \in \dom(g)$, $\Vdash_Q ``f(\zeta)=g(\zeta)"$.
    \item Suppose $\circ(\alpha)>0$. Write $\tp(p)=\langle f_0,\vec{f},A, F \rangle$ and $\tp(p^\prime)=\langle g_0,\vec{g},A^\prime,F^\prime \rangle$. Let $d^p$ and $d^{p^\prime}$ be the common domains for $p$ and $p^\prime$, respectively. Then
    \begin{itemize}
        \item $d^p \supseteq d^{p^\prime}$.
        \item $A \restriction d^{p^\prime} \subseteq A^\prime$.
        \item for $\zeta \in d^{p^\prime}$, $\Vdash_Q ``f_0(\zeta)=g_0(\zeta)"$.
        \item for $\nu \in A(\alpha)$ and $\vec{\mu} \in A$ with $\vec{\mu}(\alpha)=\nu$, say $F(\nu)=\langle \dot{P}_{\dot{\beta}_\nu/\nu},\dot{q} \rangle$, and for $\zeta \in d^{p^\prime}$, we have 
        $$p+\vec{\mu} \restriction (P_\nu * \dot{P}_{\dot{\beta}_\nu/\nu}) \Vdash_{P_\nu * \dot{P}_{\dot{\beta}_\nu/\nu}} ``f_\nu(\zeta)=g_\nu(\zeta)".$$
        \item for $\nu \in A(\alpha)$ and $\vec{\mu} \in A$ with $\vec{\mu}(\alpha)=\nu$,
        $$p+\vec{\mu} \restriction P_\nu \Vdash_\nu ``F(\nu)_0=F^\prime(\nu)_0 \text{ and } F(\nu)_1 \leq^*_{F(\nu)_0} F^\prime(\nu)_1"$$ 
    \end{itemize}
\end{enumerate}

\end{definition}

\begin{definition}[The extension relation]

Let $p=\stem(p) {}^\frown \tp(p)$ and $p^{\prime}=\stem(p^\prime) {}^\frown \tp(p^\prime)$. We say that $p$ is a {\em extension of} $p^\prime$, denoted by $p \leq_\alpha p^\prime$, if the following hold.

\begin{enumerate}
    \item The case $\circ(\alpha)=0$. Then
    \begin{itemize}
        \item $\stem(p) \leq \stem(p^\prime)$ in some $Q=P_{\alpha^\prime}* \dot{P}_{\dot{\beta}^\prime/\alpha^\prime}$.
        \item Write $\tp(p)=\langle f \rangle$ and $\tp(p^\prime)=\langle g\rangle$. Then $\dom(f) \supseteq \dom(g)$ and for $\zeta \in \dom(g)$, $\stem(p) \Vdash_Q ``f(\zeta)=g(\zeta)"$.
    \end{itemize}
    \item The case $\circ(\alpha)>0$. Then there is $\vec{\mu}$ (possibly empty) such that if $p^*=p^\prime+\vec{\mu}$, and we write $\tp(p)=\langle f,\vec{f},A,F \rangle$ and $\tp(p^*)=\langle g,\vec{g},A^*,F^* \rangle$, $d^p$ and $d^*$ are the common domains for $p$ and $p^*$, respectively, then
    \begin{itemize}
        \item $\stem(p) \leq \stem(p^*)$ in some $Q=P_{\alpha^\prime}* \dot{P}_{\dot{\beta}^\prime/\alpha^\prime}$.
       
        \item $d^p \supseteq d^{p^*}$.
        \item $A \restriction d^{p^*} \subseteq A^*$.
        \item for $\zeta \in d^{p^*}$, $\Vdash_Q ``f_0(\zeta)=g_0(\zeta)"$.
        \item for $\nu \in A(\alpha)$ and $\vec{\mu} \in A$ with $\vec{\mu}(\alpha)=\nu$, say $F(\nu)=\langle \dot{P}_{\dot{\beta}_\nu/\nu},\dot{q} \rangle$, and for $\zeta \in d^{p^\prime}$, we have 
        $$p+\vec{\mu} \restriction (P_\nu * \dot{P}_{\dot{\beta}_\nu/\nu}) \Vdash_{P_\nu * \dot{P}_{\dot{\beta}_\nu/\nu}} ``f_\nu(\zeta)=g_\nu(\zeta)".$$
        \item for $\nu \in A(\alpha)$ and $\vec{\mu} \in A$ with $\vec{\mu}(\alpha)=\nu$,
        $$p+\vec{\mu} \restriction P_\nu \Vdash_\nu ``F(\nu)_0=F^*(\nu)_0 \text{ and } F(\nu)_1 \leq^*_{F(\nu)_0} F^*(\nu)_1".$$
        (The last $\leq^*$ relation is intended).
    \end{itemize}

\end{enumerate}
Equivalently, $p \leq p^\prime$ if there is $\vec{\mu}$ such that $p$ is a condition obtained by extending the interleaving part of a direct extension of $p^\prime+\vec{\mu}$. We call $p^*$ the {\em interpolant} of $p$ and $p^\prime$. To be precise, $p^*$ is the unique condition such that $p^*=p+\vec{\mu}$ for some $\vec{\mu}$, $p^\prime$ is obtained by extending the interleaving part of a direct extension of $p^\prime$.
\end{definition}

\begin{proposition}
 $(P_\alpha,\leq)$ has the $\alpha^{++}$-chain condition.   
\end{proposition}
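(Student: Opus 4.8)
The plan is to adapt the $\Delta$-system argument of Proposition \ref{cc1}, taking care of the two features not present there: the stems of conditions in $P_\alpha$ now have variable length, and, when $\circ(\alpha)>0$, the tree parts live with respect to $\vec{E}_\alpha(d)$ rather than a single extender. Given a family $\{p^\gamma\mid\gamma<\alpha^{++}\}$ of nontrivial conditions, we first reduce to the case of a common stem. Every stem lies in $\bigcup\{P_\beta*\dot{P}_{\dot{\gamma}/\beta} : \beta<\alpha,\ \dot{\gamma}\text{ a }P_\beta\text{-name for an inaccessible }<\alpha\}$, which has size $\leq\alpha$ since $\alpha$ is inaccessible and $|P_\beta|<\alpha$ for $\beta<\alpha$ by the induction hypothesis; so by regularity of $\alpha^{++}$ we may assume $\stem(p^\gamma)=s$ for all $\gamma$. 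Then all the $p^\gamma$ have the same purity, live over the same two-step iteration $Q$ (the trivial forcing in the pure case, $P_{\alpha_{n-1}}*\dot{P}_{\dot{\beta}_{n-1}/\alpha_{n-1}}$ in the impure case), and, in the impure case, obey the same bound $\min(A^\gamma(\alpha))>\sup\{\delta\mid\exists r\in P_{\alpha_{n-1}}(r\Vdash\dot{\beta}_{n-1}=\delta)\}$.

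Next we apply the $\Delta$-system lemma to the common domains $d^\gamma$ of $\tp(p^\gamma)$: since $|d^\gamma|\leq\alpha$ and $(\alpha^+)^\alpha=\alpha^+<\alpha^{++}$ under GCH, we may assume $\{d^\gamma\}$ is a $\Delta$-system with root $d$. In the easy case $\circ(\alpha)=0$ we have $\tp(p^\gamma)=\langle f^\gamma\rangle$ with $\dom(f^\gamma)=d^\gamma$ and each $f^\gamma(\zeta)$ a $Q$-name (a check-name for an ordinal $<\alpha$ when $Q$ is trivial) from a set of size $\leq\alpha$, so $f^\gamma\restriction d$ takes at most $\alpha^+<\alpha^{++}$ values; fixing it, $s{}^\frown\langle f^{\gamma_1}\cup f^{\gamma_2}\rangle$ extends both $p^{\gamma_1}$ and $p^{\gamma_2}$ and we are done. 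Assume now $\circ(\alpha)>0$ and write $\tp(p^\gamma)=\langle f_0^\gamma,\vec{f}^\gamma,A^\gamma,F^\gamma\rangle$. Since restricting an increasing sequence of $\alpha$-$d^\gamma$-objects to $d$ does not change its $\alpha$-value, $A^\gamma(\alpha)=(A^\gamma\restriction d)(\alpha)\subseteq\alpha$, which has at most $2^\alpha=\alpha^+$ possibilities, so we fix $b=A^\gamma(\alpha)$. With $b$ fixed, the objects $f_0^\gamma\restriction d$, $\langle f_\nu^\gamma\restriction d\mid\nu\in b\rangle$ and $F^\gamma$ (now a function on the fixed set $b$) are each coded by an element of $H_{\alpha^+}$ and hence range over sets of size $\leq\alpha^+<\alpha^{++}$, using GCH and that over any forcing of size $<\alpha$ there are only $\leq\alpha$ nice names for an ordinal $<\alpha$ and $\leq\alpha$ nice names for a condition of the relevant quotient; thinning once more, we may assume these are all independent of $\gamma$, say $f_0^*$, $\langle f_\nu^*\mid\nu\in b\rangle$ and $F^*$.

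Finally we produce a common extension of any two surviving conditions $p^{\gamma_1},p^{\gamma_2}$. Put $D=d^{\gamma_1}\cup d^{\gamma_2}$, an $\alpha$-domain in $V$, and let $\tilde{A}_i$ be the pullback of $A^{\gamma_i}$ to $D$; the claim is that $A'=\tilde{A}_1\cap\tilde{A}_2$ is again a $D$-tree. Its level-$n$ and successor sets are intersections of two pullbacks of $\vec{E}_\alpha$-large sets, and since each $E_{\alpha,\beta}(D)$ is an $\alpha$-complete ultrafilter and the projections $\pi_{D,d^{\gamma_i}}$ carry large sets to large sets, such an intersection again lies in $\vec{E}_\alpha(D)$, as one sees by splitting it along its Mitchell-order slices $\{\mu:\circ(\mu(\alpha))=\beta\}$; then we shrink $A'$ so that it is generated by some $B'\subseteq\mathcal{B}_D$. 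Since $A'(\alpha)\subseteq A^{\gamma_1}(\alpha)\cap A^{\gamma_2}(\alpha)=b$, we set $g_0=f_0^{\gamma_1}\cup f_0^{\gamma_2}$ (consistent, as both restrict to $f_0^*$ on $d$), for $\nu\in A'(\alpha)$ let $g_\nu$ be the $P_\nu*\dot{P}_{\dot{\beta}_\nu/\nu}$-name with domain $D$ forced to agree with $f_\nu^{\gamma_i}$ on $d^{\gamma_i}$ for $i=1,2$, and put $F'=F^*\restriction A'(\alpha)$. Then $p'=s{}^\frown\langle g_0,\langle g_\nu\mid\nu\in A'(\alpha)\rangle,A',F'\rangle$ is a direct extension of both $p^{\gamma_1}$ and $p^{\gamma_2}$: one checks $d^{p'}=D\supseteq d^{\gamma_i}$, $A'\restriction d^{\gamma_i}\subseteq A^{\gamma_i}$, the Cohen parts restrict correctly, and $F'(\nu)=F^{\gamma_i}(\nu)$ for $\nu\in A'(\alpha)$; hence $p^{\gamma_1}$ and $p^{\gamma_2}$ are compatible. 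We expect the main obstacle to be precisely this last claim that $A'$ is a $D$-tree, i.e. that the intersection of the two pulled-back trees stays $\vec{E}_\alpha(D)$-large, since $\vec{E}_\alpha(D)$ is not itself an ultrafilter; here one must use that it is closed under finite intersections on each Mitchell-order slice, together with the coherence and projection properties of the extenders recorded in Section \ref{bas}.
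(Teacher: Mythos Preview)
Your proof is correct and follows essentially the same approach as the paper, which simply refers back to Proposition~\ref{cc1}: fix the stem (only $\leq\alpha$ possibilities since $\stem(p)\in V_\alpha$), fix $b=A^\gamma(\alpha)$, apply the $\Delta$-system lemma to the domains, and then pin down the Cohen parts and $F$ on the root. You spell out in more detail than the paper does the construction of the common extension, in particular the tree $A'$ as the intersection of the pullbacks; your concern about $\vec{E}_\alpha(D)$ not being an ultrafilter is unfounded exactly for the reason you identify---each slice $E_{\alpha,\beta}(D)$ is a genuine $\alpha$-complete ultrafilter, so the slice-by-slice intersection stays large and $A'$ is a $D$-tree.
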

\begin{proof}
   Similar to the proof of Proposition \ref{cc1}.
\end{proof}

\begin{proposition}
    $(\{p \in P_\alpha \mid p \text{ is pure}\},\leq^*)$ is $\alpha$-closed.
\end{proposition}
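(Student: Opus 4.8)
The plan is to mimic the proof of Proposition \ref{closure1}, the $\circ(\alpha)=1$ case, but keeping track of the new phenomena that arise when $\circ(\alpha)$ can be larger: the trees are now $d$-trees with respect to $\vec{E}_\alpha(d)$ rather than a single normal-ish extender, and the sequence $\vec f$ of Cohen-like functions is indexed by $\nu\in A(\alpha)$, some of which themselves carry positive Mitchell order. So fix $\beta<\alpha$ and a $\leq^*$-decreasing sequence $\langle p^{\beta'}\mid \beta'<\beta\rangle$ of pure conditions, write $p^{\beta'}=\langle f_0^{\beta'},\vec f^{\beta'},A^{\beta'},F^{\beta'}\rangle$ with common domain $d^{\beta'}$. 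Put $d^*=\bigcup_{\beta'<\beta}d^{\beta'}$ (an $\alpha$-domain, since $\beta<\alpha$ and each $d^{\beta'}\in[\alpha^{++}]^{\le\alpha}$), $f_0^*=\bigcup_{\beta'<\beta}f_0^{\beta'}$, and let $(A^{\beta'})^*$ be the pullback of $A^{\beta'}$ to the $\alpha$-domain $d^*$ (which is a $d^*$-tree by the pullback closure stated in Section \ref{bas}); set $A^*=\bigcap_{\beta'<\beta}(A^{\beta'})^*$ and shrink so that $\min(A^*(\alpha))>\beta$. One must check $A^*\in\vec E_\alpha(d^*)$-generated: intersecting $<\alpha$-many coherent $d^*$-trees is again such a tree after passing to a generated subtree via the diagonal-intersection recipe recalled in Section \ref{bas}, using $\alpha$-completeness of each $\vec E_{\alpha,\gamma}(d^*)$.

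Next, define the data on top of $A^*$ by recursion on $\nu\in A^*(\alpha)$, exactly as in Proposition \ref{closure1}. For $\zeta\in d^*$ let $f^*_\nu(\zeta)$ be (forced to be) $f^{\beta'}_\nu(\zeta)$ for the least $\beta'$ with $\zeta\in d^{\beta'}$ and $\nu\in A^{\beta'}(\alpha)$ — here one is taking a $\leq$-lower bound in $\dot C(\nu^+,\nu^{++})$, which is $\nu^+$-closed, and $\nu>\beta$, so the bound exists. For $F^*(\nu)=\langle \dot P_{\dot\beta_\nu/\nu},\dot q^*_\nu\rangle$, let $\dot q^*_\nu$ be a $\leq^*$-lower bound of $\langle \dot q^{\beta'}_\nu\mid\beta'<\beta\rangle$; this is legitimate because $\Vdash_\nu$ ``$(\dot P_{\dot\beta_\nu/\nu},\leq^*)$ is $\nu^*$-closed'' (induction hypothesis, Proposition \ref{indsch}(\ref{5})) with $\nu^*$ the least inaccessible above $\nu$, and $\nu^*>\nu>\beta$. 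The output is $p^*=\langle f_0^*,\vec f^*,A^*,F^*\rangle$ with $\vec f^*=\langle f^*_\nu\mid \nu\in A^*(\alpha)\rangle$; one verifies directly from the definition of $\leq^*_\alpha$ that $p^*\leq^* p^{\beta'}$ for every $\beta'$: $d^*\supseteq d^{\beta'}$, $A^*\restriction d^{\beta'}\subseteq A^{\beta'}$ (by construction of the pullback-intersection), the agreement of $f_0^*$ and each $f^*_\nu$ with the corresponding $f_0^{\beta'},f^{\beta'}_\nu$ on $d^{\beta'}$, and the $\leq^*$-domination $F^*(\nu)_1\leq^*F^{\beta'}(\nu)_1$ forced by $p^*+\vec\mu\restriction P_\nu$ for the relevant $\vec\mu$.

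The main obstacle is the forcing-with-names bookkeeping in the recursion on $\nu$, made more delicate by $\circ(\nu)$ possibly being positive: the objects $f^*_\nu(\zeta)$ and $\dot q^*_\nu$ are $P_\nu*\dot P_{\dot\beta_\nu/\nu}$-names and $P_\nu$-names respectively, and one has to be sure that ``$\dot q^*_\nu$ is a $\leq^*$-lower bound of the $\dot q^{\beta'}_\nu$'' really can be arranged as a single name, not merely decided densely — this is where the phrase ``forced to be'' in Proposition \ref{closure1} is doing the work, and it goes through because the relevant quotient is forced to be highly closed and $\beta'<\beta<\nu$, so there are only $<\nu^*$-many conditions to dominate. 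A secondary point is confirming the coherence hypothesis is preserved under the pullback and intersection so that $A^*$ is a legitimate $d^*$-tree in the sense of Section \ref{bas}; this is routine from the conventions adopted there (``we assume every $d$-tree is coherent / generated by some $B\subseteq\mathcal B_d$''). Once these are in place the verification that $p^*$ is a condition and a common $\leq^*$-lower bound is mechanical.
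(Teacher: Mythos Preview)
Your proof is correct and follows essentially the same approach as the paper, which simply says ``Similar to the proof of Proposition \ref{closure1}''; you have spelled out in detail exactly the adaptation the paper leaves implicit. One small slip: when you discuss taking a lower bound of the $f^{\beta'}_\nu$'s you refer to ``$\dot C(\nu^+,\nu^{++})$'', but $f_\nu$ has domain an $\alpha$-domain and values which are names for ordinals below $\alpha$, so the ambient Cohen-type poset is at level $\alpha$, not $\nu$; this does not affect the argument, since all you need is that for $\zeta\in d^{\beta'}\cap d^{\beta''}$ the names $f^{\beta'}_\nu(\zeta)$ and $f^{\beta''}_\nu(\zeta)$ are forced equal, which follows from $\leq^*$-decreasingness.
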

\begin{proof}
    Similar to the proof of Proposition \ref{closure1}.
\end{proof}

\begin{theorem}
\label{Prikry2}
    $(P_\alpha,\leq,\leq^*)$ has the Prikry property, i.e. for $p \in P_\alpha$ and a forcing statement $\varphi$, there is $p^* \leq^* p$ such that $p^* \parallel \varphi$.
\end{theorem}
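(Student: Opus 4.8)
The plan is to follow the template of Theorem \ref{prikry1}, upgrading it to handle the new feature that $\circ(\alpha)$ may be an arbitrary ordinal, so that one-step extensions can themselves be impure (i.e.\ a single object $\mu$ with $\circ(\mu(\alpha))>0$ contributes a whole block carrying its own tree). The skeleton is: first prove an analogue of Lemma \ref{maximize1} --- a ``maximizing'' direct extension $p^*\le^* p$ such that whenever $r\le p^*$ decides $\varphi$ and $p'$ is the interpolant of $r$ and $p^*$, the condition $\stem(r){}^\frown\tp(p')$ already decides $\varphi$ the same way --- and then feed this into a reflection/diagonalization argument over the extenders $\vec E_\alpha(d)$ to absorb all the possible one-step choices into the measure-one sets, reducing the general case to the case where the interpolant has minimal length, which is then finished by a density argument. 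As in the previous section I would first reduce to $p$ pure, since the impure case just carries along a fixed stem $\stem(p)\in Q=P_{\alpha_{n-1}}*\dot P_{\dot\beta_{n-1}/\alpha_{n-1}}$ and everything relativizes to $V^Q$.

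For the maximizing lemma I would reuse the auxiliary forcing $\mathbb A$ of pairs $\langle g_0\rangle{}^\frown\vec g$ with varying common domain, which is $\alpha^+$-closed, take an elementary submodel $N\prec H_\theta$ with ${}^{<\alpha}N\subseteq N$, $|N|=\alpha$, $d,V_\alpha\in N$, build an $\mathbb A$-generic descending sequence of length $\alpha$ meeting every dense set in $N$, and form the maximal lower bound $f^*$ with common domain $d^*=N\cap\alpha^{++}$. The new point compared to Section \ref{atfir}: when we pass to level $\nu=\mu(\alpha)$, the relevant ``bottom'' forcing is $P_\nu*\dot P_{\dot\beta_\nu/\nu}$, whose size is $\nu^{++}$ and which by the induction scheme (\ref{5}) has the Prikry property, is $\nu^{++}$-c.c., and is $\nu^*$-closed under $\le^*$; but now, when $\circ(\nu)>0$, extending ``the interleaving part of a direct extension of $p+\vec\mu$'' already involves an inner Prikry-type forcing at level $\nu$ --- so the recursion over $\nu\in A^*(\alpha)$ that produces the witnesses $A^\nu,F^\nu$ and the descending sequences of $\dot q^\nu_\rho$'s must invoke the inductive Prikry property at $\nu$ (to maximize decisions of the formula relative to $P_\nu*\dot P_{\dot\beta_\nu/\nu}$), and must keep all proper initial segments inside $N$, exactly as before. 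The closure bookkeeping ($\Vdash_\rho``(\dot P_{\dot\beta_\rho/\rho},\le^*)$ is $\rho^*$-closed$"$, with $\rho>\beta$) is identical.

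The main argument then diagonalizes: for each object $\psi$ over $d^*$ (lying above the relevant stems) and each condition $r$ in the bottom forcing $P_\nu*\dot P_{\dot\beta_\nu/\nu}$ with $r\le\stem(p+\vec\tau)$ for suitable $\vec\tau<\mu=\psi\restriction d$, one uses the inductive Prikry property below $\alpha$ to find $r^*\le^* r$ and a Cohen part $f^{\gamma,\xi}$ deciding the statement $\varphi^i_{\vec\tau,\mu}$ (``$\exists t\in\dot G_\nu\,(t{}^\frown\tp(p+\vec\tau{}^\frown\langle\mu\rangle)\Vdash\varphi)$'' and its variants), organizes these into descending sequences indexed by $\nu^{++}\times\nu$ using $\nu^*$-closure and $\nu^+$-closure of the Cohen forcings, takes a maximal lower bound $\dot q^*_\nu$, and then applies $j_{\alpha,0}$ (or the appropriate extender ultrapower) to $\mu\mapsto\mathcal F_\mu$, evaluated at $\mc(d)$, to pull back a single measure-one set $B^*\subseteq\mathcal B_{d^*}$ on which the chosen Cohen functions dominate coherently. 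Shrinking $B^*$ further so that on each fiber $\{\psi:\psi(\alpha)=\nu\}$ the decision $i(r)$ is constant (intersecting over the $\nu^{++}$-many $r$'s and diagonalizing over $\nu<\alpha$) produces $B^{**}=\Delta_\nu B_\nu$, and $p^*=\langle f_0^*,\vec f^*,A^{**},F^{**}\rangle$ with $A^{**}$ generated by $B^{**}$. Finally, given $p'\le p^*$ deciding $\varphi$ with interpolant $p^*+\vec\mu$ of minimal length $n^*$: if $n^*=0$ we are done; if $n^*>0$, the maximizing lemma forces $\stem(p'){}^\frown\tp(p^*+\vec\mu)\Vdash\varphi$, the constancy of $i(r)$ on the relevant fiber lets us replace the top Cohen function of the last block by $f^*_{\nu'}\circ\psi^{-1}$ without changing the decision, and then an argument identical to the $n^*=2$ case of Section \ref{atfir} shows $r^*{}^\frown\tp(p^*+\vec\mu\restriction(n^*-1))\Vdash\varphi$, contradicting minimality of $n^*$.

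The hard part, as in the prototype, is the bookkeeping in the maximizing lemma --- ensuring that the $\alpha$-length recursion building the witnesses $\langle A^\nu,F^\nu,\dot q^\nu_\rho\rangle$ keeps every proper initial segment in $N$ while simultaneously respecting the $\le^*$-closure of the interleaved quotient forcings and the fact that, for $\circ(\nu)>0$, ``deciding $\varphi$'' at the bottom already means deciding it over an inner Prikry forcing rather than a plain iteration; this is where one genuinely uses $\circ(\alpha)<\alpha$, the coherence of $\vec E_\alpha(d)$ (so that $A\downarrow\tau$ lives in $\vec E_{\tau(\alpha)}$), and Remark \ref{ppoitre} to keep all fibers of size $\le\nu^{++}$ so the enumerations and intersections stay below $\alpha$.
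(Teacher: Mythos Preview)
Your plan correctly identifies that the maximizing lemma (the analogue of Lemma \ref{maximize1}) goes through essentially unchanged, and the paper agrees: its Lemma \ref{maximize2} is stated and dispatched in one line. But the diagonalization you sketch afterwards is the Section \ref{atfir} argument verbatim, and that argument does \emph{not} survive the passage to $\circ(\alpha)>1$ without a genuinely new ingredient that you are missing.

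The issue is this. When $\circ(\alpha)=1$ there is a single measure $E_{\alpha,0}(d)$, so once you have found, for each lower condition $r$, a unique decision value $i(r)\in\{0,1,2\}$ on a measure-one set, you may simply \emph{shrink} the tree to that set and diagonalize. When $\circ(\alpha)>1$ you have a whole family $E_{\alpha,\beta}(d)$ for $\beta<\circ(\alpha)$, and for a fixed $r$ the decision $i_{r,\beta}$ may vary with $\beta$. You cannot shrink to the single $\beta$ on which $i_{r,\beta}<2$, because a $d$-tree must contain objects of \emph{every} order $<\circ(\alpha)$; throwing away the other orders destroys the condition. The paper handles this with an ``integration'' lemma (Lemma \ref{integrate}): given data $\langle r_\mu, f_\mu,\vec f_\mu,A_\mu,F_\mu\rangle$ on an $E_{\alpha,\beta}$-measure-one set $B$, one applies $j_{\alpha,\beta}$ to the whole system, reads off a single $p^*\le^* p$ (with a full tree built from sets $B_0\cup B_1\cup B_2$ covering orders $<\beta$, $=\beta$, $>\beta$ respectively), together with a \emph{predensity} clause: every extension of $r^{**}{}^\frown\tp(p^*)$ is compatible with $r^{**}{}^\frown(\tp(p^*)+\langle\psi\rangle)$ for some $\psi$ of order exactly $\beta$. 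This predensity is what makes the minimality argument go through.

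Concretely, your final paragraph (``an argument identical to the $n^*=2$ case of Section \ref{atfir}'') breaks: suppose the interpolant is $p^*+\langle\psi_0,\psi_1\rangle$ and set $\nu=\psi_1(\alpha)$. You know that for $r=r_\gamma$ there is \emph{some} $\beta$ with $i_{\gamma,\beta}<2$, but there is no reason that $\circ(\nu)=\beta$. The paper must therefore argue by cases: if $\circ(\nu)<\beta$, pick $\psi_2$ of order $\beta$ in the top tree $A^{p^{**}}$; if $\circ(\nu)>\beta$, pick $\psi_2$ of order $\beta$ inside the \emph{block tree} $A_1$ at level $\nu$ (this is exactly where one uses that one-step extensions are now impure and carry their own trees, and where the coherence $B_1\downarrow\psi\in E_{\psi(\alpha),\beta}$ from the integrate lemma is needed). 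In either case one reaches a condition compatible with an order-$\beta$ one-step extension, and \emph{then} the predensity clause from Lemma \ref{integrate} lets you conclude $r^{**}{}^\frown\tp(p^*+\langle\psi_0\rangle)\Vdash\varphi$. None of this machinery is present in your outline; the phrase ``or the appropriate extender ultrapower'' hides precisely the step that requires work.
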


If $\circ(\alpha)=0$, any $p \in P_\alpha$ is a finite iteration of Prikry-type forcings, hence, it has the Pirkry property. The proof for $\circ(\alpha)=1$ is similar to the proof of Theorem \ref{prikry1}. We assume $\circ(\alpha)>1$. We need a few lemmas before we prove the Prikry property.

\begin{lemma}
\label{integrate}
Let $p \in P_\alpha$, $\beta<\circ(\alpha)$ with its common domain $d$, $\tp(p)=\langle f,\vec{f},A,F \rangle$. Fix $r \in P_{\nu^\prime} *\dot{P}_{\dot{\beta}_{\nu^\prime}/\nu^\prime}$. Let $\vec{\tau} \in A$ be unique such that there is $r^* \leq \stem(p+\vec{\tau})$ and $r$ is obtained by extending the interleaving part of $r$. Suppose is a measure-one set $B \in E_{\alpha,\beta}(d)$ such that 
\begin{enumerate}
\item for every $\nu \in B(\alpha)$, there is $\dot{q}_\nu^*$ such that $\Vdash_\nu ``\dot{q}_\nu^* \leq^* F(\nu)_1"$. Write $F^\prime(\nu)_1=\dot{q}_\nu^*$ for all $\nu$.
\item for every $\mu \in B$ with $\nu=\mu(\alpha)$, there are $r_\mu \leq^*r$, $f_\mu,\vec{f}_\mu,A_\mu,F_\mu$ such that 
\begin{align*}
    & r_\mu {}^\frown \langle f_\mu,\vec{f}_\mu,A_\mu,F_\mu,\langle \dot{P}_{\dot{\beta}_\nu/\nu},\dot{q}_\nu^*\rangle ) \leq^* \\  &r {}^\frown ( f_{\nu^\prime} \circ \mu^{-1},\langle f_\gamma \circ \mu^{-1} \mid \gamma \in (A_{\vec{\tau}} \downarrow \mu)(\nu) \rangle, A_{\vec{\tau}} \downarrow \mu, F^\prime \restriction (A_{\vec{\tau}} \downarrow \mu)(\nu),F(\nu)).
\end{align*}
\end{enumerate}
Then there is $p^* \leq^* p$ and $r^{**}$ such that 
\begin{itemize}
    \item for $\psi \in \Lev_0(A)$ with $\mu=\psi \restriction d$, $\circ(\mu(\alpha))=\beta$, we have that $r_\mu=r^{**}$,  $r^{**} {}^\frown (\tp(p^*)+\langle \psi \rangle) \leq^* r^{**} {}^\frown ( f_\mu,\vec{f}_\mu,A_\mu,F_\mu, \langle \dot{P}_{\dot{\beta}_\nu/\nu},\dot{q}_\nu^* \rangle)$.
    \item every extension of $r^{**} {}^\frown \tp(p^*)$ is compatible with $r^{**} {}^\frown (\tp(p^*)+\langle \psi \rangle)$ for some $\psi$ with $\circ(\psi(\alpha))=\beta$.
\end{itemize}
\end{lemma}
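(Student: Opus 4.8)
The plan is to run the usual ``integration through the ultrapower'' argument familiar from Extender--based Prikry/Magidor--Radin forcing, adapted to carry along the name--valued Cohen coordinates and the interleaving blocks $F(\nu)$. First I would reduce to the case that $p$ is pure (in the impure case $\stem(p)$ is carried along verbatim) and clean up $B$: since $E_{\alpha,\beta}(d)$ is $\alpha$--complete and the $r_\mu$ all lie in the single forcing $P_{\nu^\prime}*\dot P_{\dot\beta_{\nu^\prime}/\nu^\prime}$, of size $\le(\nu^\prime)^{++}<\alpha$, I shrink $B$ so that $r_\mu$ equals a fixed $r^{**}\le^* r$; shrinking once more I arrange $\Vdash_{\nu_0}``\dot\beta_{\nu_0}<\nu_1"$ for all $\nu_0<\nu_1$ in $B(\alpha)$, so that the concatenations ``$r{}^\frown(\cdots)$'' appearing in the hypothesis are genuine conditions. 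Write $j=j_{\alpha,\beta}\colon V\to M$ for the ultrapower embedding and $\mc=\mc_{\alpha,\beta}(d)$; recall $\mc(j(\alpha))=\alpha$, and by coherence $\circ^M(\alpha)=\beta$, so $\mc$ witnesses $E_{\alpha,\beta}(d)$--largeness and, once $d^*$ is defined, $\mc^*:=\mc_{\alpha,\beta}(d^*)$ projects to $\mc$ under $\pi_{d^*,d}$.

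Next I would read the components of $p^*$ off of $M$. Put $d^*:=j(\mu\mapsto\dom(f_\mu))(\mc)$; by {\L}o\'s this is, in $M$, an $\alpha$--domain extending $\rng(\mc)=d$, hence is an $\alpha$--domain in $V$ with $d\subseteq d^*$, and take it to be the common domain of $p^*$. Put $f^{p^*}_{\nu^\prime}:=j(\mu\mapsto f_\mu)(\mc)$, a function on $d^*$ whose values are $P_{\nu^\prime}*\dot P_{\dot\beta_{\nu^\prime}/\nu^\prime}$--names for ordinals below $\alpha$ (here one uses that this forcing, and names for small ordinals over it, have size $<\alpha=\crit(j)$ and so are fixed by $j$); evaluating at the fixed points $\mc(j(\delta))=\delta$ for $\delta\in d$ shows $f^{p^*}_{\nu^\prime}\restriction d=f_{\nu^\prime}$. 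The sequence $\vec f^{p^*}$ of name--valued Cohen functions of $p^*$ is likewise obtained from $j(\mu\mapsto\vec f_\mu)(\mc)$ on the relevant indices, the remaining coordinates (and $f_0$) being extended to $d^*$ arbitrarily off $d$. Set also $A^*:=j(\mu\mapsto A_\mu)(\mc)$ and $F^*:=j(\mu\mapsto F_\mu)(\mc)$, objects of $M$ mentioning only Mitchell orders $<\beta$. Now choose a $d^*$--tree $A^{p^*}$ for $\vec E_\alpha(d^*)$, generated by some $B^{**}\in\vec E_\alpha(d^*)$ — so its $0$th level unavoidably carries objects of every Mitchell order below $\circ(\alpha)$ — whose $E_{\alpha,\beta}(d^*)$--part $B^*$ concentrates on the $\psi$ (writing $\mu=\psi\restriction d$, $\nu=\psi(\alpha)$) such that $\mu\in B$, such that $f^{p^*}_{\nu^\prime}\circ\psi^{-1}$ and the relevant $f^{p^*}_\gamma\circ\psi^{-1}$ extend $f_\mu$ and the corresponding coordinates of $\vec f_\mu$, and such that the coherent pullback $A^{p^*}\downarrow\psi$ is a subtree of $A_\mu$ with $F^*$ restricted appropriately refining $F_\mu$; such a $B^*$ exists because all of these are properties of $\mc^*$, read via {\L}o\'s from the identities $j(\mu\mapsto f_\mu)(\mc)=f^{p^*}_{\nu^\prime}$, $j(\mu\mapsto A_\mu)(\mc)=A^*$, $j(\mu\mapsto F_\mu)(\mc)=F^*$, $\pi_{d^*,d}(\mc^*)=\mc$, together with the coherence of $\vec E$, which already guarantees $A^{p^*}\downarrow\psi\in\vec E_{\psi(\alpha)}(\psi[d^*\cap\dom\psi])$. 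Finally set $F^{p^*}(\nu)=\langle\dot P_{\dot\beta_\nu/\nu},\dot q^*_\nu\rangle$ for $\nu\in A^{p^*}(\alpha)$; since hypothesis~(1) gives $\Vdash_\nu``\dot q^*_\nu\le^* F(\nu)_1"$, and after one further shrinkage making $A^{p^*}\restriction d\subseteq A$, it is routine to check $p^*\le^* p$ directly from the definition of $\le^*$ ($d^*\supseteq d$, agreement of $f_0$ on $d$, tree shrinkage, and the $F$--clause).

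For the conclusions: if $\psi\in\Lev_0(A^{p^*})$ and $\circ(\psi(\alpha))=\beta$, then $\psi\in B^*$, and the one--step extension $p^*+\langle\psi\rangle$ yields, over $r^{**}$, precisely the block whose pure part is $\langle f^{p^*}_{\nu^\prime}\circ\psi^{-1},\langle f^{p^*}_\gamma\circ\psi^{-1}\mid\gamma\in(A^{p^*}\downarrow\psi)(\nu)\rangle,A^{p^*}\downarrow\psi,F^{p^*}\restriction\cdots\rangle$ followed by the interleaving $\langle\dot P_{\dot\beta_\nu/\nu},\dot q^*_\nu\rangle$; by the defining properties of $B^*$ this $\le^*$--refines $r^{**}{}^\frown(\langle f_\mu,\vec f_\mu,A_\mu,F_\mu\rangle,\langle\dot P_{\dot\beta_\nu/\nu},\dot q^*_\nu\rangle)$, which, recalling $r_\mu=r^{**}$, is the first assertion. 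For the second assertion I would argue that $\{\,r^{**}{}^\frown(\tp(p^*)+\langle\psi\rangle)\mid\psi\in B^*\,\}$ is predense below $r^{**}{}^\frown\tp(p^*)$: given an extension $s$, if $s$ has not yet selected a new block then $s+\langle\psi\rangle$ is a common refinement for any $\psi\in B^*\cap\Lev_0(A^s)$; if the first new block of $s$ is $\sigma_0$ with $\circ(\sigma_0(\alpha))=\beta$, then $\sigma_0\in B^*$ and $s$ already refines $r^{**}{}^\frown(\tp(p^*)+\langle\sigma_0\rangle)$; and if $\circ(\sigma_0(\alpha))>\beta$, then by coherence a $\beta$--object of $B^*$ lies below $\sigma_0$ in $A^{p^*}$ and $s$ can be refined so that this object becomes the first club point, giving the desired common refinement.

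I expect the construction of $A^{p^*}$ to be the crux: one must manufacture a genuine $\vec E_\alpha(d^*)$--tree in $V$ whose one--step extensions by level--$0$ objects of Mitchell order $\beta$ direct--extend the prescribed trees $A_\mu$ while still restricting into $A$, which forces one to play {\L}o\'s's theorem (identifying $A^*$ with $A^{p^*}\downarrow\mc^*$) off against the coherence of $\vec E$. The name--valued Cohen coordinates contribute the extra bookkeeping of checking that $j$ acts as the identity on all the relevant names, which is exactly where the hypothesis that the forcings below $\alpha$ have size $<\alpha$ is used; once these points are settled, the rest of the verification is a direct unwinding of the definitions of $\le^*$ and of the one--step extension.
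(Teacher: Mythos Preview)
Your approach is the paper's: shrink $B$ to stabilize $r_\mu=r^{**}$, integrate $f_\mu,\vec f_\mu,A_\mu,F_\mu$ through the ultrapower at $\mc_{\alpha,\beta}(d)$, and reflect by {\L}o\'s to find the measure-one set of good $\psi$. Two points are left underspecified, however, and both matter for the verification.

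First, you fix only the $E_{\alpha,\beta}(d^*)$-slice $B^*$ of the generating set $B^{**}$; the other Mitchell orders cannot be arbitrary. The paper takes $B^{**}=B_0\cup B_1\cup B_2$ where $B_0$ (orders $<\beta$) is the set generating your $A^*=j(\mu\mapsto A_\mu)(\mc)$ --- this is precisely what forces $A^{p^*}\downarrow\psi\subseteq A_\mu$, since that pullback only sees the order-$<\beta$ part of $B^{**}$ --- while $B_2$ consists of those $\psi$ of order $>\beta$ for which $B_1\downarrow\psi$ is still $E_{\psi(\alpha),\beta}$-large. Without $B_2$ your predense clause ``by coherence a $\beta$-object of $B^*$ lies below $\sigma_0$'' is unjustified: coherence gives \emph{some} $\beta$-object below $\sigma_0$, not one lying in $B^*$. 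Second, your predense case-split omits the case in which every new block of $s$ has Mitchell order $<\beta$; there one picks $\psi\in B^*$ above the whole sequence $\vec\tau$ and uses (again via the choice of $B_0$) that $\vec\tau\downarrow\psi\in A_\mu$, so that $s$ is compatible with $r^{**}{}^\frown(\tp(p^*)+\langle\psi\rangle)$.
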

\begin{proof}
Assume for simplicity that $p$ is pure. First, we can shrink $B$ such that there is $r^*$, for all $\mu$, $r_\mu=r^*$. Then let
\begin{align*}
    f_{\nu^\prime}^*=j_{E_{\alpha,\beta}}(\mu \mapsto f_\mu)(\mc_{\alpha,\beta}(d)),\\
    \vec{f}^*=j_{E_{\alpha,\beta}}(\mu \mapsto \vec{f}_\mu)(\mc_{\alpha,\beta}(d)), \\
    A^*=j_{E_{\alpha,\beta}}(\mu \mapsto A_\mu)(\mc_{\alpha,\beta}(d)), \\
    F^*=j_{E_{\alpha,\beta}}(\mu \mapsto F_\mu)(\mc_{\alpha,\beta}(d)). 
\end{align*}
\begin{itemize}
    \item $f_{\nu^\prime}^*$ is forced to be an extension of $j_{E_{\alpha,\beta}}(\mu \mapsto f_{\nu^\prime} \circ \mu^{-1})(\mc_{\alpha,\beta}(d))=f_{\nu^\prime}$. Say $d^*=\dom(f_{\nu^\prime})$.
    \item by coherence, let $B_0$ generate $A^*$, we have that $B_0 \in \cap_{\gamma<\beta}E_{\alpha,\beta}(d^*)$, $A^* \subseteq A$.
    \item for each $\nu \in A(\alpha)$, $F^*(\nu)_1$ is forced to be a direct extension of $F(\nu)_1$.
    \item $\dom(\vec{f}^*)=A^*(\alpha)$.
    \item for $\nu \in A^*(\alpha)$, $f_\nu^*$ is forced to be a direct extension of $f_\nu$.
\end{itemize}
Let $f^*$ be $f \cup \{(\gamma,\check{0}) \mid \gamma \in d^* \setminus d\}$. Let $\mc=\mc_{\alpha,\beta}(d^*)$. Let $\mc=\mc_{\alpha,\beta}(d^*)=(j_{\alpha,\beta} \restriction d^*)^{-1}$. Then
\begin{itemize}
    \item $j_{\alpha,\beta}(f_{\nu^\prime}) \circ \mc=f_{\nu^\prime}$.
    \item $j_{\alpha,\beta}(A^*) \downarrow \mc=A^*$.
    \item Let $j_{\alpha,\beta}(\vec{f}^*)=\langle f^*_\gamma \rangle_\gamma$. Then $\langle f^*_\gamma \circ \mc^{-1} \mid \gamma \in A^*(\alpha) \rangle=\vec{f}^*$.
\end{itemize}
There is a measure-one set $B_1 \in E_{\alpha,\beta}(d^*)$ such that for $\psi \in B_1$,
\begin{enumerate}
    \item $\mu:=\psi \restriction d \in B$.
    \item $f_{\nu^\prime}^* \circ \psi^{-1}=f_\mu$.
    \item $A^* \downarrow \psi=A_\mu$.
    \item $F^*(\xi)=F_\mu(\xi)$ for $\xi \in A_\mu(\mu(\alpha))$.
    \item $\langle f^*_\gamma \circ \psi^{-1} \mid \gamma \in A_\mu \rangle=\vec{f}_\mu$.
\end{enumerate}
For $\nu \in B_1(\alpha)$, let $f_\nu^*=f_\nu \cup \{(\gamma,\check{0}) \mid \gamma \in d^* \setminus d\}$ and $F^*(\nu)_1=F^\prime(\nu)$. Finally, let $B_2$ be the collection of $d$-object $\psi$ with $\circ(\psi(\alpha))>\beta$ and $B_1 \downarrow \psi \in E_{\psi(\alpha),\beta}(\psi[d^* \cap \dom(\psi)])$.
For $\nu \in B_2$, let $f_\nu^*=f_\nu \cup \{(\gamma,\check{0}) \mid \gamma \in d^* \setminus d\}$ and $F^*(\nu)=F^\prime(\nu)$. Let $A^{**}$ be generated by $B_0 \cup B_1 \cup B_2$, and $p^*=\langle f_0^*,\vec{f}^*,A^{**},F^* \rangle$. To show that $p^*$ satisfies the properties, note that for $\psi \in \Lev_0(A)$ with $\circ(\psi(\alpha))=\beta$, $\psi \in B_1$, and by the property of $B_1$, the first requirement for $p^*$ is straightforward. To show the predense property, let $s \leq p^*$ such that there is an initial segment of $s$, $r_0$, which is an extension of $r^{**}$. Let $\vec{\tau} \in A^{**}$ be unique such that there is $s^\prime\leq^*p^*+\vec{\tau}$ and $s$ is obtained by extending the interleaving part of $s^\prime$. If $\vec{\tau}=\emptyset$, then pick any $\psi \in \Lev_0(A^s)$ such that $\circ(\psi(\alpha))=\beta$, then $s+\langle \psi \rangle \leq r^{**} {}^\frown  p^*+\langle \psi \restriction d^* \rangle$. If $\vec{\tau}=\neg \emptyset$. If for all $i$, $\circ(\tau_i(\alpha))<\beta$, then take any $\psi$ as before with $\vec{\tau}<\psi$. Since $\psi \in B_1$, then the key point is that $\vec{\tau} \in A_\mu$. From here, we can show that $s \leq s^{**} {}^\frown p^*+\langle \tau \rangle$. Suppose now that $i$ is the least such that $\circ(\tau_i(\alpha)) \geq \beta$. If $\circ(\tau_i(\alpha))=\beta$, then as before, $s \leq r^{**} {}^\frown p^* {}^\frown \langle \tau_i \rangle$. If $\circ(\tau_i(\alpha))>\beta$, let $\mu=\tau_i \restriction d$, and let $\psi^\prime \in A_\mu$ which appears in the corresponding measure-one set when we add $\tau_i$. Then $\psi^\prime=\psi \circ \tau_i^{-1}$ for some $i$. We can then show that $s+\langle \psi^\prime \rangle {}^\frown r^{**} {}^\frown p^*+\langle \psi \rangle$. This completes the proof.
\end{proof}

\begin{lemma}
\label{maximize2}
Let $p \in P_\alpha$ and $\varphi$ be a forcing statement. Then there is $p^* \leq^* p$ such that if $r=r_0 {}^\frown \tp(r)$, $r \leq p^*$, $p^\prime$ is the interpolant of $r$ and $p^*$, and $r \parallel \varphi$, then

$$r_0  {}^\frown \tp(p^\prime) \parallel \varphi \text{ the same way}.$$
\end{lemma}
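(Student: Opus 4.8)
The plan is to follow the proof of Lemma~\ref{maximize1}, replacing its single ad~hoc integration step by the general tool Lemma~\ref{integrate}, applied once for each extender $E_{\alpha,\beta}(d)$, $\beta<\circ(\alpha)$. As always we may assume $p$ is pure, say $p=\langle f_0,\vec f,A,F\rangle$ with common domain $d$ and $A$ generated by $B\subseteq\mathcal B_d$; the impure case merely carries a fixed stem along. Fix $N\prec H_\theta$ for large regular $\theta$ with ${}^{<\alpha}N\subseteq N$, $|N|=\alpha$ and $d,V_\alpha,p,\vec E,\varphi\in N$. First I would run the $\mathbb A$-fusion of the proof of Lemma~\ref{maximize1}: $\mathbb A$ is the $\alpha^+$-closed forcing of sequences $\langle g_0\rangle{}^\frown\vec g$ with a common domain $d_g$ and each coordinate a name forced below the appropriate lower-level forcing to be an ordinal $<\alpha$; inside $N$ build an $\mathbb A$-decreasing sequence below $\langle f_0\rangle{}^\frown\vec f$ meeting every dense open $D\in N$, with maximal lower bound $\langle f_0^*\rangle{}^\frown\vec f^{\,*}$, common domain $d^*=N\cap\alpha^{++}$, and $A^*\subseteq N$ the pullback of $A$, generated by some $B^*\subseteq\mathcal B_{d^*}$.

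Next, recursing on $\beta<\circ(\alpha)$ in increasing order (note $\circ(\alpha)<\alpha$, so the recursion is short and limits are handled by $\alpha$-closure of the pure part), I would shrink the tree so that every one-step extension by an object of $\circ$-value $\beta$ has the decision of $\varphi$ already visible at the interpolant. At stage $\beta$, for each object $\mu$ in the current $\Lev_0$ with $\nu:=\mu(\alpha)$ and $\circ(\nu)=\beta$, the block inserted by $p+\langle\mu\rangle$ is $(q_\mu,F(\nu))$, where $q_\mu=f_0^*\circ\mu^{-1}$ if $\beta=0$ and, if $\beta>0$, $q_\mu$ is the pure $P_\nu$-condition $\langle f_0^*\circ\mu^{-1},\langle f_\gamma^*\circ\mu^{-1}\rangle,A^*\downarrow\mu,F'\rangle$ (well defined by coherence of $\vec E_\nu$). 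In either case I would use the Prikry property of $P_\nu*\dot P_{\dot\beta_\nu/\nu}$ (available for $\nu<\alpha$ by Proposition~\ref{indsch}), together with the $\nu^*$-closure of $\dot P_{\dot\beta_\nu/\nu}$ under $\leq^*$, to produce a direct extension of $(q_\mu,F(\nu))$ --- i.e.\ data $(r_\mu,f_\mu,\vec f_\mu,A_\mu,F_\mu)$ with $\dot q^*_\nu\leq^*F(\nu)_1$ --- deciding the same way the auxiliary statement $\varphi_{\vec\tau,\mu}^{\,i}$ in the style of the proof of Theorem~\ref{prikry1} (``$\exists t\in\dot G_\nu\ (t{}^\frown\tp(p+\vec\tau{}^\frown\langle\mu\rangle)\parallel\varphi)$'' and its two variants); this is legitimate because $\tp(p+\vec\tau{}^\frown\langle\mu\rangle)$ is a top part at level $\alpha$ and does not move under extensions inside $q_\mu$, so the trichotomy is downward absolute. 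Shrinking so that $r_\mu$ is a single $r^*$ on a measure-one $B_\beta\in E_{\alpha,\beta}(d^*)$, hypotheses~(1)--(2) of Lemma~\ref{integrate} hold, and applying it replaces the running condition by a $\leq^*$-extension for which, for every $\psi$ with $\circ(\psi(\alpha))=\beta$, the one-step extension $r^*{}^\frown(\tp+\langle\psi\rangle)$ directly extends a condition deciding $\varphi_{\cdot}^{\,i}$ maximally, and $r^*{}^\frown\tp$ has a predense set of such one-step extensions.

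Having processed every $\beta<\circ(\alpha)$, I would take $p^*$ to be the final condition (a $\leq^*$-lower bound of the running conditions), whose tree is generated by the diagonal intersection of the $B_\beta$'s (coherence makes this again of the required form) and whose top $F^{p^*}$ assigns to each $\nu$ the $\leq^*$-maximal lower bound of the interleaving names built for $\nu$; then $p^*\leq^*p$. To verify the conclusion, let $r=r_0{}^\frown\tp(r)\leq p^*$ decide $\varphi$, with interpolant $p'=p^*+\vec\mu$, and induct on $|\vec\mu|$. If $\vec\mu=\langle\rangle$ then $r_0{}^\frown\tp(p')=r$ and we are done. Otherwise peel off one object $\mu$ of $\vec\mu$ with $\circ(\mu(\alpha))=\beta$: since the data chosen at stage $\beta$ decides $\varphi_{\cdot}^{\,i}$ maximally and, by Lemma~\ref{integrate}, the $r_0{}^\frown(\tp(p^*)+\langle\mu\rangle)$-type one-step extensions are predense, the decision of $\varphi$ by $r$ is already forced by $r_0{}^\frown\tp(p^*+\langle\mu\rangle)$ through that predense set; then, noting that $p^*+\langle\mu\rangle$ inherits the maximization property of $p^*$ (its top part is again a condition of the same shape at level $\alpha$, now with one extra block in the stem), apply the induction hypothesis at the shorter interpolant to push the decision back to $r_0{}^\frown\tp(p')$.

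The main obstacle, and what makes this genuinely harder than Lemma~\ref{maximize1}, is the nesting caused by objects of positive $\circ$-value: a one-step extension by such a $\mu$ inserts an \emph{entire} pure condition $q_\mu$ at level $\mu(\alpha)$, which itself carries a tree and can be further one-step-extended, so the stem length is unbounded (in contrast with the remark following Definition~\ref{extrel}), and the ``decided the same way'' property must be made to survive arbitrarily deep chains of nested one-step extensions across all of the extenders $E_{\alpha,\beta}$ simultaneously. Orchestrating this requires carefully interleaving the per-extender integration of Lemma~\ref{integrate} with the downward absoluteness of the auxiliary statements and the coherence bookkeeping needed so that the measure-one sets for the different $\beta$, together with the $\leq^*$-decreasing interleaving names, remain compatible under diagonal intersection and restriction --- all while staying inside the ${}^{<\alpha}$-closed model $N$, so that the fused functions $f_0^*,\vec f^{\,*}$ and the tree $A^*$ are actually available when the construction terminates.
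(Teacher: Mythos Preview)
Your proposal conflates the proof of Lemma~\ref{maximize2} with the proof of the Prikry property (Theorem~\ref{Prikry2}) that \emph{uses} it. The paper's proof is literally ``essentially the same as the proof of Lemma~\ref{maximize1}'': one repeats the $\mathbb A$-fusion and, for each $\nu\in A^*(\alpha)$, defines the dense set $D_\nu\subseteq\mathbb A$ exactly as before, proving density by enumerating all of $P_\nu*\dot P_{\dot\beta_\nu/\nu}$ (size $(\xi^*)^{++}<\alpha$ by the induction scheme) and at each step maximizing the top part at level~$\alpha$ under~$\leq^*$. Nothing here depends on the internal structure of the lower-level conditions $r$, on how many extenders sit on~$\alpha$, or on whether blocks of the stem carry their own trees; the only ingredients are $|P_\nu*\dot P_{\dot\beta_\nu/\nu}|<\alpha$ and the $\alpha$-closure of pure top parts under~$\leq^*$, both of which hold at general levels. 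Lemma~\ref{integrate} and the auxiliary statements $\varphi^i_{\vec\tau,\mu}$ are not needed; they enter only later, in the proof of Theorem~\ref{Prikry2}, where Lemma~\ref{maximize2} is already available as a black box.

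Concretely, your argument has gaps. The base case of your induction on $|\vec\mu|$ is incorrect: when $\vec\mu=\langle\rangle$ one has $p'=p^*$, but $r=r_0{}^\frown\tp(r)$ with $\tp(r)\leq^*\tp(p^*)$ possibly proper, so $r_0{}^\frown\tp(p')\neq r$ and one still needs the maximization over top-part direct extensions --- which is exactly what the $D_\nu$ construction supplies and what your scheme never establishes. In the inductive step, ``peel off one $\mu$'' does not type-check: $r_0$ lives in the forcing at level $\vec\mu(\alpha)$, and concatenating it with $\tp(p^*+\langle\mu\rangle)$ for an earlier $\mu$ produces an object whose stem level and top-part naming conventions do not match. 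Finally, Lemma~\ref{integrate} is stated for a \emph{fixed} lower-level condition $r$, whereas Lemma~\ref{maximize2} must hold for \emph{every} $r_0$ extending $\stem(p')$; your invocation does not bridge that quantifier gap. The ``nesting'' worry you flag is real for Theorem~\ref{Prikry2}, but for the present lemma $r_0$ is treated as an opaque element of $P_\nu*\dot P_{\dot\beta_\nu/\nu}$ and its internal complexity is irrelevant --- which is precisely why the proof of Lemma~\ref{maximize1} transfers verbatim.
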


\begin{proof}
    The proof is essentially the same as the proof of Lemma \ref{maximize1}.
\end{proof}

\begin{lemma}
Let $p$ be a condition and $\varphi$ be a forcing statement. Then there is $p^* \leq^* p$, $\tp(p^*)=\langle f^*,\vec{f}^*,A^*,F^* \rangle$ such that for every object $\mu$ which appears in $A$, say $\nu=\mu(\alpha)$, we have that for every $r \in P_{\nu^\prime}* \dot{P}_{\dot{\beta}_{\nu^\prime}/\nu^\prime}$ for some $\nu^\prime<\nu$, there is $r^* \leq r^*$ and a unique $i \in \{0,1,2\}$ such that
\end{lemma}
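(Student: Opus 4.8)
The plan is to mirror, at the general level, the construction in the proof of Theorem~\ref{prikry1}, now iterated over all Mitchell orders $\beta<\circ(\alpha)$ and glued together by repeated applications of Lemma~\ref{integrate}. First I complete the (truncated) conclusion as follows: passing to $p^*=\langle f^*,\vec f^*,A^*,F^*\rangle$, I want that for every $\mu$ appearing in $A^*$, with $\nu=\mu(\alpha)$ and $\vec\tau$ the unique sequence in $A^*$ such that $r$ extends the interleaving part of $\stem(p^*+\vec\tau)$, there are $r^*\leq r$ and a unique $i\in\{0,1,2\}$ with
$$r^*{}^\frown\bigl(f^*_{\vec\tau(\alpha)}\circ\mu^{-1},F^*(\nu)\bigr)\Vdash\varphi^i_{\vec\tau,\mu},$$
where $\varphi^0_{\vec\tau,\mu},\varphi^1_{\vec\tau,\mu},\varphi^2_{\vec\tau,\mu}$ are the statements "some $t$ in the generic over $P_\nu*\dot{P}_{\dot{\beta}_\nu/\nu}$ has $t{}^\frown\tp(p^*+\vec\tau{}^\frown\langle\mu\rangle)\Vdash\varphi$", resp. "$\Vdash\neg\varphi$", resp. "no such $t$ decides $\varphi$", exactly as in the proof of Theorem~\ref{prikry1}. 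As a preliminary reduction I assume $p$ is pure (the impure case only carries along a fixed stem) and, using Lemma~\ref{maximize2}, I replace $p$ by a direct extension so that any decision of $\varphi$ below an extension of $p$ reflects to a decision by $\tp$ of the interpolant; this is what will let the $r^*$'s above be read off from $\tp(p^*)$ alone.

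The core construction proceeds by recursion on $\beta<\circ(\alpha)$. At stage $\beta$, for each $\nu$ with $\circ(\nu)=\beta$ that can occur as $\mu(\alpha)$ and each $\mu\in\OB_{\alpha,\beta}$, I enumerate the pairs $(r,\vec\tau)$ with $r\in\bigcup_{\nu'<\nu}P_{\nu'}*\dot{P}_{\dot{\beta}_{\nu'}/\nu'}$ and $\vec\tau<\mu$ the object sequence that $r$ sits above, and I build a $\leq^*$-decreasing system of Cohen parts $f^{\gamma,\xi}$, interleaving conditions $\dot{q}^{\gamma,\xi}$, and subtrees $A^{\gamma,\xi}$ with their top parts $F^{\gamma,\xi}$, just as in the proof of Theorem~\ref{prikry1}: at each step one uses the Prikry property at the level below $\nu$ (available from Proposition~\ref{indsch}(\ref{5}) and the induction hypothesis on forcings at smaller levels) to find $r^*_{\gamma,\xi}\leq^* r$ deciding $\varphi^i_{\vec\tau,\mu}$, and the construction closes off at limits because $\Vdash_\nu$"$(\dot{P}_{\dot{\beta}_\nu/\nu},\leq^*)$ is $\nu^*$-closed" (hence $\nu^{+3}$-closed) while the Cohen forcings $\dot{C}(\nu^{\prime+},\nu^{\prime++})$ below are $\nu^{\prime+}$-closed. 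Let $\dot{q}^*_\nu$ be the $\leq^*$-maximal lower bound of the $\dot{q}^{\gamma,\xi}$, and for each $\mu$ let $f^\mu_{\nu'}$ be the $\leq$-maximal lower bound of $\{f^{\gamma,\xi}\mid\vec\tau_\xi(\alpha)=\nu'\}$ after aligning domains; this packages, for each $\mu$ of Mitchell order $\beta$, a local datum $(f_\mu,\vec f_\mu,A_\mu,F_\mu)$ of the shape required by hypothesis~(2) of Lemma~\ref{integrate}, with hypothesis~(1) witnessed by the $\dot{q}^*_\nu$'s.

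Next I invoke Lemma~\ref{integrate} at this $\beta$: applying $j_{E_{\alpha,\beta}}(\mu\mapsto\cdot)(\mc_{\alpha,\beta}(d))$ to the local data produces $f^*_{\nu'},\vec f^*,A^*,F^*$, and Lemma~\ref{integrate} furnishes a direct extension of the current condition together with a coherent measure-one set on which the local data is recovered by restriction along $\psi$, plus the predensity statement "every extension of $r^{**}{}^\frown\tp$ is compatible with some $r^{**}{}^\frown(\tp+\langle\psi\rangle)$". I run this successively for $\beta=0,1,\dots$ through all of $\circ(\alpha)$, intersecting (diagonally, as in Section~\ref{bas}) the trees produced so far between stages; this is legitimate since $\circ(\alpha)<\alpha$ and, by the closure proposition, the pure conditions are $\alpha$-closed under $\leq^*$, so a $\leq^*$-lower bound of the fewer-than-$\circ(\alpha)$-many partial results exists. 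Finally, for each $\mu$ in the resulting tree and each $r$ below $\nu=\mu(\alpha)$ I extend $r^*$ once more, by Prikry at the level below, so that it forces exactly one of $\varphi^0_{\vec\tau,\mu},\varphi^1_{\vec\tau,\mu},\varphi^2_{\vec\tau,\mu}$; this extra step is what makes $i$ unique, and a measure-one refinement on which the assignment $r\mapsto i(r)$ stabilizes (as in the proof of Theorem~\ref{prikry1}) yields $p^*$.

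The main obstacle is the integration step when $\beta>0$: adding a single object $\mu$ with $\circ(\mu(\alpha))=\beta$ simultaneously drops one into a full Prikry-type structure at $\mu(\alpha)$ with its own coherent tree $A\downarrow\mu$, so the local datum at $\mu$ is not merely a Cohen function but an entire condition-shaped object, and the ultrapower construction must be arranged so that the resulting measure-one set is coherent and the predensity survives all the way down the sub-Prikry structure. This is exactly what Lemma~\ref{integrate} packages, so the remaining work is organizational: verifying that the recursion over $(r,\vec\tau)$ at stage $\beta$ produces data meeting hypotheses (1) and (2) of that lemma, and checking that the $\beta$-by-$\beta$ intersections of trees preserve coherence — both following the pattern already established in the $\circ(\alpha)=1$ case.
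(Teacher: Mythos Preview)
Your proposal captures the right ingredients --- Lemma~\ref{maximize2}, the Prikry property at smaller levels, the closure of the quotients, and Lemma~\ref{integrate} --- but the organization contains a genuine gap. Note first that in the paper this lemma is stated incompletely and is given no separate proof; the construction is carried out inside the proof of Theorem~\ref{Prikry2}, so that is the argument you should be compared against.

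The paper's outer recursion is over an enumeration $\langle r_\gamma\mid\gamma<\alpha\rangle$ of \emph{all} relevant lower conditions $r\in\bigcup_{\nu'<\alpha}P_{\nu'}*\dot P_{\dot\beta_{\nu'}/\nu'}$; for each $r_\gamma$ it treats all $\nu$ and all $\mu$ with $\mu(\alpha)=\nu$ at once, determines the values $i_{\gamma,\beta}$ for every $\beta<\circ(\alpha)$, and invokes Lemma~\ref{integrate} at most once (only in Case~2, and only for the particular $\beta$ witnessing $i_{\gamma,\beta}<2$). The resulting sequence $\langle p^\gamma\mid\gamma<\alpha\rangle$ has length exactly $\alpha$, and the final $p^*$ is \emph{not} obtained by $\alpha$-closure but by an explicit diagonal construction, which works because each $\nu\in A^*(\alpha)$ appears in $A^\gamma(\alpha)$ for only boundedly many $\gamma$.

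Your outer recursion is over $\beta<\circ(\alpha)$, and you write ``invoke Lemma~\ref{integrate} at this $\beta$'' as if it were a single step, then appeal to $\alpha$-closure to take a lower bound over ``fewer-than-$\circ(\alpha)$-many partial results''. But Lemma~\ref{integrate} is stated for a \emph{fixed} $r$; to handle stage $\beta$ you must still iterate over all $\alpha$-many $r$'s, so the true length of the recursion is $\alpha$, not $\circ(\alpha)$. At that length, plain $\alpha$-closure of the pure part does not suffice; you need the diagonal-intersection bookkeeping (and the control on how often each $\nu$ is touched) that the paper's $r$-first organization makes explicit. Your proposal does not address this, and as written the step ``a $\leq^*$-lower bound \dots\ exists'' is not justified.
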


\begin{proof}[Proof of Theorem \ref{Prikry2}]
Let $p$ be a condition and $\varphi$ be a forcing statement. Assume $p$ is pure and satisfies Lemma \ref{maximize2}. Write $p=\langle f,\vec{f},A,F \rangle$ with its common domain $d$.
We will build a $\leq^*$-decreasing sequence $\langle p^\gamma \mid \gamma<\alpha \rangle$ below $p$, and write $p^\gamma=\langle f^\gamma, \vec{f}^\gamma, A^\gamma,F^\gamma \rangle$, where the common domain is $d^\gamma$, such that
    \item for $\nu<\alpha$, $|\{\gamma \mid \nu \in A^\gamma(\alpha)\}| \leq \eta$.
In the end, we take $p^*=\langle f^*,\vec{f}^*,A^*,F^* \rangle$ such that $f^*=\cup f^\gamma$, $A^*=\Delta_\nu A^\nu$, for $\eta \in A^*$, $F^*(\eta)_1$ is a $\leq^*$-lower bound of $\{F^\nu(\eta)\}_\nu$ (possible since the number of $\nu$ such that $F^\nu(\alpha)$ exists is small), and $\Vdash_{P_\eta*\dot{P}_{\dot{\beta}_\eta/\eta}} ``f^*_\eta=\cup f^\nu_\eta"$. Then $p^* \leq^* p$ and then we show that $p^*$ satisfies the Prikry property. Let $\langle r_\gamma \mid \gamma< \alpha \rangle$ be an enumeration of $r \in  \cup_{\nu<\alpha} P_\nu *\dot{P}_{\dot{\beta}_\nu/\nu}$ such that there is $\vec{\tau} \in A$, $r \leq \stem(p+\vec{\tau})$, and we let $\vec{\tau}_\gamma$ be unique such that there is $r^* \leq^* p+\vec{\tau}_\gamma$, $r_\gamma$ is obtained by extending the interleaving part of $r^*$ only.

For $\gamma$ limit, take $p^\gamma$ as a $\leq^*$-lower bound of $\langle p^{\gamma^\prime}\mid \gamma^\prime<\gamma \rangle$. Suppose $p^\gamma$ is constructed, we now consstruct $p^{\gamma+1}$. Assume $r_\gamma \in P_{\nu^\prime}* \dot{P}_{\dot{\beta}_{\nu^\prime}/\nu^\prime}$. Fix $\nu>\nu^\prime$. By Remark \ref{ppoitre}, assume that $A^\gamma$ is generated by $B^\gamma \subseteq \mathcal{B}_{d^\gamma}$. Let $\{\mu_\xi \mid \xi<\nu^{++}\}$ be the collection of $\mu \in B^\gamma$ with $\mu(\alpha)=\nu$. Let $\dot{G}_\nu$ be the canonical name for $P_\nu* \dot{P}_{\dot{\beta}_\nu/\nu}$-generic. By the Prikry property, let $r_{\gamma,\xi} \leq^* r_\gamma$, $f^{\gamma,\xi}$, $A^{\gamma,\xi}$, $F^{\gamma,\xi}$, $\dot{q}^*_\nu$, and $\vec{f}^{\gamma,\xi}$, such that 
\begin{align*}
    & r_{\gamma,\xi} {}^\frown ( f^{\gamma,\xi}, \vec{f}^{\gamma,\xi},A^{\gamma,\xi},F^{\gamma,\xi}, \langle \dot{P}_{\dot{\beta}_\nu/\nu},\dot{q}_\nu^* \rangle) \leq^* \\
    & r_\gamma {}^\frown ( f^\gamma_{\nu^\prime} \circ \mu_\xi^{-1},\langle f^\gamma_\eta \mid \eta \in (A^\gamma_{\vec{\tau}_\gamma} \downarrow \mu_\xi)(\nu) \rangle,A^\gamma_{\vec{\tau}_\gamma} \downarrow \mu_\xi,F^\gamma \restriction (A^\gamma_{\vec{\tau}_\gamma} \downarrow \mu_\xi)(\nu),F^\gamma(\nu)),  
\end{align*}
and $r_{\gamma,\xi} {}^\frown ( f^{\gamma,\xi}, \vec{f}^{\gamma,\xi},A^{\gamma,\xi},F^{\gamma,\xi}, \langle \dot{P}_{\dot{\beta}_\nu/\nu},\dot{q}_\nu^* \rangle)$ decides $$\varphi_{\vec{\tau}_\gamma,\mu_\xi} \equiv \exists t \in \dot{G}_\nu (t{}^\frown (\tp(p+(\vec{\tau}_\gamma {}^\frown \langle \mu_\xi \rangle) \restriction d)).$$
Notice that $\dot{q}_\nu^*$ does not depend on $\xi$. This can be done since $\Vdash_\nu ``(\dot{P}_{\dot{\beta}_\nu/\nu},\leq^*)$ is $\nu^{+3}$-closed" (it is much higher than $\nu^{+3}$-closed). 
By extending further regarding the direct extension relation, assume that $r_{\gamma,\xi} {}^\frown ( f^{\gamma,\xi}, \vec{f}^{\gamma,\xi},A^{\gamma,\xi},F^{\gamma,\xi}, \langle \dot{P}_{\dot{\beta}_\nu/\nu},\dot{q}_\nu^* \rangle) \Vdash \varphi_{\vec{\tau}_\gamma,\mu_\xi}^i$ for a unique $i \in \{0,1,2\}$, where
\begin{align*}
    \varphi_{\vec{\tau}_\xi,\mu_\xi}^0 &\equiv \exists t \in \dot{G}_\nu(t {}^\frown \tp(p+\vec{\tau}_\xi {}^\frown \langle \mu_\xi \rangle) \Vdash \varphi), \\
    \varphi_{\vec{\tau}_\xi,\mu_\xi}^1 & \equiv \exists t \in \dot{G}_\nu(t {}^\frown \tp(p+\vec{\tau}_\xi {}^\frown \langle \mu_\xi \rangle) \Vdash \neg \varphi),\\
     \varphi_{\vec{\tau}_\xi,\mu_\xi}^2 & \equiv \nexists t \in \dot{G}_\nu(t {}^\frown \tp(p+\vec{\tau}_\xi {}^\frown \langle \mu_\xi \rangle) \parallel  \varphi).
\end{align*}
We now change some notations to ease us at the end of the proof. For each $\mu=\mu_\xi$, and $\nu=\mu(\alpha)$, let $fr_{\gamma,\xi}=r_\mu$, $f_{\nu^\prime}^\mu=f^{\gamma,\xi}$, for each $\eta$, $f_\eta^\mu=f_\eta^{\gamma,\xi}$, $A^\mu=A^{\gamma,\xi}$, and $F^\mu=F^{\gamma,\xi}$. 
Then let $i^\gamma_\mu$ be the unique $i$ such that $r_{\gamma,\xi} {}^\frown ( f^{\gamma,\xi}, \vec{f}^{\gamma,\xi},A^{\gamma,\xi},F^{\gamma,\xi}, \langle \dot{P}_{\dot{\beta}_\nu/\nu},\dot{q}_\nu^* \rangle) \Vdash \varphi_{\vec{\tau}_\xi,\mu}^{i^\gamma_\mu}$. For $\beta<\circ(\alpha)$, let $B_{\gamma,\beta}^i=\{\mu \mid \circ(\mu(\alpha))=\beta$ and $ i^\gamma_\mu=i\}$. There is unique $i_{\gamma,\beta}$ such that $B_{\gamma,\beta}^{i_{\gamma,\beta}} \in E_{\alpha,\beta}(d^\gamma)$. We now consider two cases.

Case 1: for all $\beta<\circ(\alpha)$, $i_{\gamma,\beta}=2$. Let $A^{\gamma+1}$ be a $d^\gamma$-tree generated by $\cup_{\beta<\circ(\alpha)} B_{\gamma,\beta}^2$, for $\nu \in A^{\gamma+1}(\alpha)$, $F^{\gamma+1}(\nu)=\langle \dot{P}_{\dot{\beta}_\nu/\nu},\dot{q}_\nu^* \rangle$ and $p^{\gamma+1}=\langle f^\gamma,\vec{f}^\gamma, A^{\gamma+1},F^{\gamma+1} \rangle$.

Case 2: there is $\beta<\circ(\alpha)$, $i_{\gamma,\beta}<2$. For each $\mu$, we have that for each $\mu \in B_{\gamma,\beta}^{i_{\gamma,\beta}}$, say $\mu=\mu_\xi$, we have $r_\mu=r_{\gamma,\xi}$, $f_\mu=f^{\gamma,\xi}$, $\vec{f}_\mu=\vec{f}^{\gamma,\xi}$, $A_\mu=A^{\gamma,\xi}$, and $F_\mu=F^{\gamma,\xi}$. Apply Lemma \ref{integrate} to obtain $p^{\gamma+1} \leq^* p^\gamma$.

We now finish the construction. 

\begin{claim}
$p^*$ satisfies the Prikry property.
\end{claim}
\begin{proof}
    If there is $p^{**} \leq^* p^*$ which decides $\varphi$, then we also finish. Suppose not. Let $p^{**} \leq^* p^*$ with the minimal number of blocks of $p^{**}$ such that $p^{**} \parallel \varphi$. Without loss of generality, assume $p^{**} \Vdash \varphi$. We demonstrate the case where $n^{p^{**}}=2$. Assume that $p^{**}$ is of the form
    $$(g_0,\vec{g}_0,A_0,F_0,\langle \dot{P}_{\dot{\beta}_0/\alpha_0},\dot{q}_0\rangle) {}^\frown (g_1,\vec{g}_1,A_1,F_1,\langle \dot{P}_{\dot{\beta}_1/\alpha_1},\dot{q}_1\rangle) {}^\frown \langle h,\vec{h},T,H \rangle. $$
    Let $s\leq^* p^*+\langle \psi_0,\psi_1 \rangle$ be such that $p^{**}$ is obtained by extending only the interleaving part of $s$. Let $\mu_0=\psi_0 \restriction d$ and $\mu_1=\psi_1 \restriction d$. Then $r:=(g_0,\vec{g}_0,A_0,F_0,\langle \dot{P}_{\dot{\beta}_0,\alpha_0},\dot{q}_0 \rangle) \leq \stem(p+\langle \mu_0 \rangle)$. Hence, $r=r_\gamma$ for some $\gamma$. We now consider the construction of $p^{\gamma+1}$. Let $\nu^\prime=\alpha_0$, $\nu=\alpha_1$, and $\psi \restriction d^\gamma=\mu$. From the notation of Case 2 in the construction of $p^{\gamma+1}$, we have that $r_\mu=r_{\gamma,\xi} \leq^* r$, and 
    $$r_\mu {}^\frown (g_1,\vec{g}_1,A_1,F_1, \dot{P}_{\dot{\beta}_1/\alpha_1},\dot{q}_1\rangle) \leq^* r_\mu {}^\frown (f_\mu,\vec{f}_\mu,A_\mu,F_\mu,\dot{P}_{\dot{\beta}_1/\alpha_1},\dot{q}_\nu^*\rangle).$$
    We claim that there is $\beta<\circ(\alpha)$ with $i_{\gamma,\beta}=0$. First, note that $\stem(p^{**}) \leq p+\langle \mu_0,\mu_1 \rangle$. By Lemma \ref{maximize2},
    $\stem(p^{**}) {}^\frown \tp(p+\langle \mu_0,\mu_1 \rangle) \Vdash \varphi$. Thus, it is not possible for $i_{\gamma,\circ(\nu)}$ to be $0$ (otherwise, we can choose a generic $G_\nu$ containing $\stem(p^{**})$, and this will give a contradiction since $\stem(p^{**}) {}^\frown \tp(p+\langle \mu_0,\mu_1 \rangle \Vdash \varphi$). Hence, there is $\beta<\circ(\alpha)$ such that $i_{\gamma,\beta}<2$ and we chose a measure-one set from $E_{\alpha,\beta}(d^\gamma)$ to integrate and construct $p^{\gamma+1}$. If $\circ(\nu)=\beta$, then clearly $i_{\gamma,\beta}=0$. Suppose $\circ(\nu) \neq \beta$.

    Case 1: $\circ(\nu)<\beta$: Choose $\psi_2 \in A^{p^{**}}$ such that $\circ(\psi_2(\alpha))=\beta$, $\nu_2=\psi_2(\alpha)$, and write $\mu_2= \psi_2 \restriction d^\gamma$. By the construction of $p^{\gamma+1}$, we have that $p^{**}+\langle \psi_2 \rangle \leq p^*+\langle \psi_0,\psi_2 \rangle$.
    Choose $G$ that contains $\stem(p^{**}+\langle \psi_2 \rangle)$, then $G$ contains $r {}^\frown ( f_{\mu_2},\vec{f}_{\mu_2},A_{\mu_2}, F_{\mu_2}, \langle \dot{P}_{\dot{\beta}_{\nu_2}/\nu_2},\dot{q}_{\nu_2}^* \rangle)$. Since $i_{\gamma,\beta}=0$ or $1$, there is $t \in G$ such that $t {}^\frown \tp(p+\langle \mu_0,\mu_1,\mu_2 \restriction d \rangle) \Vdash \varphi$, but since $\stem(p^{**}) {}^\frown \tp(p+\langle \mu_0,\mu_1 \rangle) \Vdash \varphi$, by the choice of $G$, we have $i_{\gamma,\beta}=0$.

    Case 2: $\circ(\nu)>\beta$: then choose $\psi_2 \in \Lev_0(A_1)$ such that $\circ(\psi_2(\alpha))=\beta$. Consider $p^{**}+\langle \psi_2 \rangle$. Note that $\psi_2 \restriction d^\nu=\psi \circ \psi_1^{-1}$ for some $\psi \in B_{\gamma,\beta}^{i_{\gamma,\beta}}$. Then use a similar argument as in Case 1 to show that $i_{\gamma,\beta}=0$.

We now conclude that $i_{\gamma,\beta}=0$, Recall the notion of $r^{**}$ from Lemma \ref{integrate}. A similar argument on the choice of genericity shows that for every $\psi \in \Lev_0(A^*_{\langle \psi_0 \rangle})$ with $\mu=\psi \restriction d$, we have $r^* {}^\frown \langle f_\mu,\vec{f}_\mu,A_\mu,F_\mu,(\dot{P}_{\dot{\beta}_{\mu(\alpha)}/\mu(\alpha)},\dot{q}_{\mu(\alpha)}^*\rangle) {}^\frown \tp(p^*+\langle \psi_0,\psi \rangle) \Vdash \varphi$. By a density argument and a predense property stated as in Lemma \ref{integrate}, we then have that $r^* {}^\frown \tp(p^*+\langle \psi_0 \rangle) \Vdash \varphi$. This contradicts the minimality of $n^{p^{**}}$, and this finishes the proof.
\end{proof}
\end{proof}
We will now consider the cardinal arithmetic, and a preservation of cardinals and cofinalities together.

\begin{proposition}
\label{presbel2}
    For a cardinal $\beta<\alpha$ and a $P_\alpha$-name of a subset of $\beta$, $\Vdash_\alpha \dot{X} \in V^{P_\nu* \dot{P}_{\dot{\beta}_\nu/\nu}}$. As a consequence, $P_\alpha$ preserves all cardinals, and $\alpha$ is preserved.
\end{proposition}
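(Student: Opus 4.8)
The plan is to imitate the proof of Proposition~\ref{presbelow}, the only genuinely new input being the Prikry property at level $\alpha$ (Theorem~\ref{Prikry2}). First I would fix $p\in P_\alpha$, a $P_\alpha$-name $\dot X$ for a subset of $\beta$, and a generic $G\ni p$, and isolate the relevant factorization of $P_\alpha$. If $\circ(\alpha)=0$, top parts carry no tree, so stems cannot be lengthened; hence $G$ determines a finite block structure, thereby a forcing $Q=P_\nu*\dot P_{\dot\beta_\nu/\nu}$ with $\nu<\alpha$, and $V[G]=V[G_Q][h]$ where $G_Q$ is $Q$-generic and $h$ is $C(\alpha^+,\alpha^{++})^{V[G_Q]}$-generic. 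If $\circ(\alpha)>0$, the stem points of $G$ are cofinal in $\alpha$ (this is contained in ``$C_\alpha\cap\alpha$ is club in $\alpha$'', item~(\ref{2}) of Proposition~\ref{indsch}), so there is a least stem point $\nu>\beta$ of $G$; writing $Q=P_\nu*\dot P_{\dot\beta_\nu/\nu}$, the forcing factors as $Q*\dot P_{\alpha/\dot\beta_\nu}$, the tail factor being the double quotient of item~(\ref{7}). In both cases write $\dot R$ for the tail factor over $V^Q$.

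Next I would show $\dot R$ adds no new subset of $\beta$ over $V^Q$. When $\circ(\alpha)=0$, $\dot R=\dot C(\alpha^+,\alpha^{++})$ is $\le\alpha$-closed and there is nothing to prove. When $\circ(\alpha)>0$, $\dot R=\dot P_{\alpha/\dot\beta_\nu}$ has the Prikry property and $(\dot R,\le^*)$ is $(\dot\beta_\nu)^*$-closed by item~(\ref{5}); since $\nu>\beta$ we get $(\dot\beta_\nu)^*>\beta$, so $(\dot R,\le^*)$ is $\beta^+$-closed, and now the standard argument applies: given an $\dot R$-name for a subset of $\beta$ and a condition, use the Prikry property to decide ``$\gamma\in\dot X$'' by a $\le^*$-extension for each $\gamma<\beta$ and splice the $\beta$-many extensions together by $\beta^+$-closure into one $\le^*$-extension deciding $\dot X$. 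Since new subsets of $\beta$ in $V^{Q*\dot R}$ come only from the $Q$-part, this yields $\dot X[G]\in V^{Q}=V^{P_\nu*\dot P_{\dot\beta_\nu/\nu}}$, which is the first assertion. (A more hands-on alternative, closer to Proposition~\ref{presbelow}, is to build $p^*\le^* p$ by a recursion of length $<\alpha$ over the $<\alpha$ many pairs $(\gamma,s)$ with $\gamma<\beta$ and $s$ a condition of $Q$ — there are $<\alpha$ of them since $|Q|<\alpha$ — applying Theorem~\ref{Prikry2} at each step and, at limits, using that the direct extension order on top parts over a fixed stem is $\alpha$-closed, cf.\ Proposition~\ref{closure1}; I prefer the factorization version because the recursive one forces one to check carefully that a top part manufactured over one stem is still legitimate over another.)

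For the consequences: $P_\alpha$ is $\alpha^{++}$-c.c., so every cardinal $\ge\alpha^{++}$ is preserved. If $\lambda<\alpha$ is a $V$-cardinal and were collapsed in $V^{P_\alpha}$, the collapsing map would be coded by a subset of some ordinal $<\alpha$, hence by the first part it would lie in some $V^{P_\nu*\dot P_{\dot\beta_\nu/\nu}}$ with $\nu<\alpha$; but this model is a generic extension by a forcing below level $\alpha$, which by the induction hypothesis (Proposition~\ref{indsch} at levels $<\alpha$, and the analogue of the present proposition) preserves all cardinals — a contradiction. Thus every $V$-cardinal below $\alpha$ is preserved, and since $\alpha$ is a limit cardinal it is a limit of cardinals of $V^{P_\alpha}$ and hence itself a cardinal there. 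Finally $\alpha^+$ is preserved by the usual argument with the Prikry property: from a name-version of Lemma~\ref{maximize2} one gets $p^*\le^* p$ such that, below $p^*$, any name $\dot f$ for a function $\theta\to\alpha^+$ with $\theta\le\alpha$ has its range contained in $\{\,v:\exists i<\theta\ \exists(\vec\mu,r_0)\ \bigl(r_0\,{}^\frown\tp(p^*+\vec\mu)\Vdash\dot f(i)=v\bigr)\,\}$; since $\tp(p^*+\vec\mu)$ is determined by $p^*$ and $\vec\mu$ and there are only $\le\alpha$ many pairs $(\vec\mu,r_0)$, this set has size $\le\alpha$ in $V$, hence is bounded in $\alpha^+$, so no short function is cofinal and $\alpha^+$ is not collapsed. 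The main obstacle is the first assertion — choosing the correct split level $\nu$ and verifying that the tail factor is highly $\le^*$-closed; once that is secured, everything else is bookkeeping with the chain condition and the inductive hypothesis.
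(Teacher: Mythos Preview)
Your argument is correct; both routes you outline work. The paper's own proof is a one-liner ``Similar to Proposition~\ref{presbelow}'', i.e.\ it takes your hands-on alternative: fix $p$ with $\stem(p)\in Q=P_\nu*\dot P_{\dot\beta_\nu/\nu}$, run a recursion over all pairs $(\gamma,s)$ with $\gamma<\beta$ and $s\le\stem(p)$ using the Prikry property at each step and the $\alpha$-closure of top parts at limits, and read off a $Q$-name for $\dot X$.

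Your preferred factorization route is genuinely different and arguably cleaner: rather than re-running a Prikry-plus-closure recursion inside $P_\alpha$, you invoke the already-established item~(\ref{5}) of Proposition~\ref{indsch} for the quotient. One simplification: you can factor at $P_\nu$ itself rather than at $Q=P_\nu*\dot P_{\dot\beta_\nu/\nu}$. Item~(\ref{5}) gives directly that $(\dot P_{\alpha/\nu},\le^*)$ is $\nu^*$-closed with the Prikry property over $V^{P_\nu}$, so with $\nu>\beta$ you get $\dot X[G]\in V^{P_\nu}\subseteq V^Q$ without appealing to the double quotient of item~(\ref{7}). Your concern about the recursive route --- that a top part manufactured over one stem $s$ might not be legitimate over another $s'$ --- turns out to be harmless here: the values of $f_0$ (and of each $f_\nu$) in the top part are $Q$-names, and the direct-extension relation on top parts is phrased entirely via $\Vdash_Q$, so a top part obtained by direct extension over $s$ remains a legal top part over any stem in $Q$. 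This is exactly what makes the paper's terse ``similar to Proposition~\ref{presbelow}'' go through.
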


\begin{proof}
Similar to Proposition \ref{presbelow}.
\end{proof}

Note that unlike the forcing at the level of the first measurable cardinal, $P_\alpha$ may singularize cardinals below $\alpha$.
Since $P_\alpha$ has the $\alpha^{++}$-chain condition, all cardinals from $\alpha^{++}$ and above are preserved. 

We now derive the club $C_\alpha$ from $P_\alpha$. For generality, we consider the case $\circ(\alpha)>0$. Let $G$ be $P_\alpha$-generic. Then for each $\nu<\alpha$ such that by letting $Q_\nu=P_\nu * \dot{P}_{\dot{\beta}_\nu/\nu}$, we have that $G \restriction Q_\nu$ exists. $G  \restriction Q_\nu$ is $Q_\nu$-generic, and it introduces a set $C^\nu \cup C^{\beta_\nu/\nu}$ where $\beta_\nu=\dot{\beta}_\nu[G \restriction P_\nu]$, $C^\nu \subseteq \nu+1$ with $\max(C^\nu)=$, $C^{\beta_\nu/\nu} \subseteq (\nu,\beta_\nu]$ such that $\max(C^{\beta_\nu/\nu})=\beta_\nu$ if $\beta_\nu>\nu$, otherwise, $C^{\beta_\nu/\nu}=\emptyset$.
Let $C_\alpha=(\cup_{\{\nu \mid G \restriction Q_\nu \text{ exists}\}} (C^\nu \cup C^{\beta_\nu/\nu})) \cup \{\alpha\}$. Since $\circ(\alpha)>0$, we can perform one-step extension of any condition so that $\{\nu \mid G \restriction Q_\nu$ exists$\}$ is unbounded in $\alpha$. Like in the extender-based Magidor-Radin forcing, one can induct $\{\nu \mid Q_\nu$ exists$\}$ has a tail of order-type $\omega^{\circ(\alpha)}$. Hence, in $V[G]$, $\alpha$ is singularized to have cofinality $\cf(\omega^{\circ(\alpha)})$. From here and the Prikry property, one can show that $\alpha^+$ is preserved. We conclude that all cardinals are preserved. Also, note that for $\nu<\nu^\prime$ such that $G \restriction Q_\nu$, $G \restriction Q_{\nu^\prime}$ exist, we have that $C^\nu \cup C^{\beta_\nu/\nu}$ is an initial segment of $C^{\nu^\prime}$, so it is an initial segment of $C_\alpha$. Thus, $\lim(C_\alpha)=(\cup_{\{\nu \mid G \restriction Q_\nu \text{ exists}\}} (\lim(C^\nu) \cup \lim(C^{\beta_\nu/\nu}))) \cup \{\alpha\}$. As in Proposition \ref{presbel2}, the cardinal arithmetic of cardinals below $\alpha$ are determined at levels below $\alpha$. For $\xi<\alpha$, we have that by Proposition \ref{indsch} items $(\ref{3})$ and $(\ref{6})$,  either $2^\xi=\xi^+$ or $2^\xi=\xi^{++}$, and $2^\xi=\xi^{++}$ iff $\xi \in \lim(C^\nu) \cup \lim(C^{{\beta}_\nu/\nu})$.
Hence, the cardinal arithmetic below $\alpha$ satisfies $(\ref{3})$ of Proposition \ref{indsch}. Since $\alpha \in \lim(C_\alpha)$, it remains to show that $2^\alpha=\alpha^{++}$.

Work with a pure condition $p \in G$. Enumerate $\{\nu \mid G \restriction P_\nu$ exists$\}$ increasingly as $\{\nu_i \mid i<\omega^{\cf(\alpha)}\}$.  Fix $\gamma \in [\alpha,\alpha^{++})$. By a density argument, let $p^\gamma \leq p$, $p^\gamma \in G$ be such that if $\tp(p^\gamma)=\langle f^\gamma ,\vec{f}^\gamma,A^\gamma,F^\gamma \rangle$, then for every object $\mu$ which appears in $A^\gamma$, $\gamma \in \dom(\mu)$. Suppose that $\stem(p^\gamma) \in P_{\nu_{i_\gamma}}* \dot{P}_{\dot{\beta}_{\nu_{i_\gamma}}/\nu_{i_\gamma}}$.
For $i \leq i_\gamma$, define $t_\gamma(i)=0$. For $i>i_\gamma$, there is an extension $p^{\gamma,i} \in G$ such that 
\begin{enumerate}
    \item $p^{\gamma,i} \restriction P_{\nu_{i_\gamma}}$ exists.
    \item by writing $p^{\gamma_i}$ as $$(s_0, \langle \dot{P}_{\dot{\beta}_0/\alpha_0},\dot{q}_0\rangle) {}^\frown \cdots {}^\frown (s_{n-1}, \langle \dot{P}_{\dot{\beta}_{n-1}/\alpha_{n-1}},\dot{q}_{n-1}\rangle) {}^\frown \langle f,\vec{f},A,F \rangle,$$
    then $(s_0, \langle \dot{P}_{\dot{\beta}_0/\alpha_0},\dot{q}_0\rangle) {}^\frown \cdots {}^\frown s_k \in P_{\nu_{i_\gamma}}$, and
    \begin{itemize}
        \item $f(\gamma)$ is a check-name $\check{\gamma}_0$, then $\gamma_0 \in f_{n-1}$, where $f_{n-1}$ is the first coordinate of $s_{n-1}$.
        \item by recursion, $\gamma_0, \cdots, \gamma_{l-1}$ is defined for $l<n-k-1$, then $\gamma_{l-1} \in \dom(f_{n-l})$, where $f_{n-l}$ is the first coordinate of $s_{n-l}$, and $f_{n-l}(\gamma_{l-1})$ is a check-name $\gamma_l$.
    \end{itemize}
\end{enumerate}
We define $t_\gamma(i)=f_k(\gamma_{n-k-1})$.
For $\gamma<\gamma^\prime$, there is a condition $p^{\gamma,\gamma^\prime} \in G$ such that if $A^{\gamma,\gamma^\prime}$ is the tree appearing in $\tp(p^{\gamma,\gamma^\prime}$, we have that for every $\mu$ appearing in $A^{\gamma,\gamma^\prime}$, $\gamma,\gamma^\prime \in \dom(\mu)$ and $\mu(\gamma)<\mu(\gamma^\prime)$. From this, it can be shown that $t_\gamma <^* t_{\gamma^\prime}$, which means there is $i^*$ such that for $i>i^*$, $t_\gamma(i)<t_{\gamma^\prime}(i)$. This gives $\alpha^{++}$ different functions from $\omega^{\cf(\alpha)}$ to $\alpha$. It is easy to show that $\alpha$ is a strong limit cardinal, and so in $V[G]$, $2^\alpha=\alpha^{\cf(\alpha)} \geq \alpha^{++}$. Since $P_\alpha$ is $\alpha^{++}$-c.c., $2^\alpha=\alpha^{++}$ as desired.
So far we have show items $(\ref{1})$, $(\ref{2})$, and $(\ref{3})$ of Proposition \ref{indsch}. It remains to consider the business regarding the quotient forcings.

\begin{definition}[The quotient forcing]

Let $\dot{P}_{\alpha/\alpha}$ be the $P_\alpha$-name of the trivial forcing $(\{\emptyset\},\leq, \leq^*)$. In $V^{P_{\alpha}}$, let $\dot{C}_{\alpha/\alpha}$ be the $\dot{P}_{\alpha/\alpha}$-name of the empty set.
Now assume that $\beta<\alpha$.
Define $\dot{P}_{\alpha/\beta}$ as the following.
Let $G$ be $P_\beta$-generic. Define $P_\alpha[G]=\dot{P}_{\alpha/\beta}[G]$ as the forcing consisting of conditions of the form $\stem(p) {}^\frown \tp (p)$, where

\begin{enumerate}
    \item $\stem(p)$ is of the form
    $$ (P_{\beta^\prime}[G],q^\prime) {}^\frown (s_0,\langle \dot{P}_{\beta_0/\alpha_0}[G],\dot{q}_0) {}^\frown (s_{n-1},\langle \dot{P}_{\dot{\beta}_{n-1}/\alpha_{n-1}}[G],\dot{q}_{n-1} \rangle),$$
    for some $n$ (if $n=0$, then $\stem(p)$ is only $(P_{\beta^\prime}[G],q^\prime)$) such that
    \begin{itemize}
    \item $P_{\beta^\prime}[G]=\dot{P}_{\dot{\beta}^\prime/\beta}[G]$, and $q^\prime \in P_{\beta^\prime}[G]$.
    \item if $n>0$, then $\alpha_0<\cdots<\alpha_{n-1}$, and for $i<n$,
    \begin{itemize}
        \item if $\circ(\alpha_i)=0$, $s_i=\langle f_i \rangle$, and if $\circ(\alpha_i)>0$, $s_i=\langle f_i,\vec{f}_i,A_i,F_i \rangle$, where $d_i=\dom(f_i)$ is an $\alpha_i$-domain, $d_i \in V$.
        \item for $\zeta \in d_0$, $\Vdash_{P_{\beta^\prime}[G]} ``f_0(\zeta)<\alpha_0"$ and if $i>0$, then for $\zeta \in d_i$, $\Vdash_{P_{\alpha_{i-1}}[G]*\dot{P}_{\dot{\beta}_{i-1}/\alpha_{i-1}}[G]}``f_i(\zeta)<\alpha_i"$.
        \item $\Vdash_{P_{\alpha_i}[G]} ``\alpha_i \leq \dot{\beta}_i<\alpha_{i+1}"$, where $\alpha_n=\alpha$.
        \item $\Vdash_{P_{\alpha_i}[G]}``\dot{q}_i \in \dot{P}_{\dot{\beta}_i/\alpha_i}[G]"$.
        \item if $\circ(\alpha_i)>0$,
        \begin{itemize}
            \item $A_i$ is a $d_i$-tree with respects to $\vec{E}_{\alpha_i}(d_i)$ (in the sense of $V$).
            \item $\vec{f}_i=\langle f_{i,\nu} \mid \nu \in A_i(\alpha_i) \rangle$.
            \item for each $\nu$, $\dom(f_{i,\nu})=d_i$.
            \item for $\zeta \in d_i$, $\Vdash_{P_\nu[G] * \dot{P}_{\dot{\beta}_\nu/\nu}[G]}``f_{i,\nu}(\zeta)<\alpha_i"$.
            \item $\dom(F_i)=A_i(\alpha_i)$.
            \item for $\nu \in A_i(\alpha_i)$, $F_i(\nu)=\langle \dot{P}_{\dot{\beta}_\nu/\nu}[G],\dot{q} \rangle$, $\Vdash_{P_\nu[G]} ``\nu \leq \dot{\beta}_\nu<\alpha_i"$ and $\Vdash_{P_\nu[G]}``\dot{q} \in \dot{P}_{\dot{\beta}_\nu/\nu}[G]"$.
        \end{itemize}
    \end{itemize}
    \end{itemize}
    \item if $\circ(\alpha)=0$, then $\tp(p)$ is $\langle f \rangle$, and if $\circ(\alpha)>0$, then $\tp(p)=\langle f,\vec{f},A,F \rangle$, where there is a {\em common domain} $d$, which is an $\alpha$-domain (in the sense of $V$) such that
    \begin{itemize}
        \item If $\circ(\alpha)=0$, then $\dom(f)=d$ and for $\zeta \in d$, $\Vdash_{P_{\alpha_{n-1}}[G]*\dot{P}_{\dot{\beta}_{n-1}/\alpha_{n-1}}[G]} ``f(\zeta)<\alpha"$.
        \item Assume $\circ(\alpha)>0$. Then,
        \begin{itemize}
            \item $A$ is a $d$-tree with respects to $\vec{E}_\alpha(d)$ (in the sense of $V$).
            \item $\dom(F)=d$ and for $\nu \in \dom(F)$, $F(\nu)=\langle \dot{P}_{\dot{\beta}_\nu/\nu}[G],\dot{q} \rangle$ where $\Vdash_{P_\nu[G]}``\nu \leq \dot{\beta}_\nu<\alpha$ and $\dot{q} \in \dot{P}_{\dot{\beta}_\nu/\nu}[G]"$.
        \item $\dom(f)=d$, $\vec{f}=\langle f_\nu \mid \nu \in A(\alpha) \rangle$, and for all $\nu$, $\dom(f_\nu)=d$.
        \item for $\zeta \in d$, $\Vdash_{P_{\alpha_{n-1}}[G]*\dot{P}_{\dot{\beta}_{n-1}/\alpha_{n-1}}[G]} ``f(\zeta)<\alpha"$ and for $\nu \in A(\alpha)$, $\Vdash_{P_\nu[G]*\dot{P}_{\dot{\beta}_\nu/\nu}[G]} ``f_\nu(\zeta)<\alpha"$.

    \end{itemize}
    \end{itemize}
\end{enumerate}
Back in $V$. If $\dot{p} \in \dot{P}_{\alpha/\beta}$, then by density, the collection of $p_0 \in P_\beta$ such that $p_0$ decides $n$, $\alpha_0, \cdots, \alpha_{n-1}$, $\dom(f_0), \cdots, \dom(f_{n-1})$, the common domain, $A_i$, $A$, $q^\prime$ (as the equivalent $\dot{P}_{\dot{\beta}^\prime/\beta}$-name, and so on), is open dense. In this case, we say that $p_0$ {\em interprets} $\dot{p}$.
All in all, for such $p_0$ which interprets all the relevant components of $\dot{p}$, let $p_1$ be such the interpretation.
Assume $\circ(\beta)>0$ and $\circ(\alpha)>0$ (the other cases are simpler) write $p_0$ as $r_0 {}^\frown \langle g,\vec{g},B,H \rangle$ and by the interpretation, we may write $$p_1=(\langle \dot{P}_{\beta^\prime/\beta},\dot{q}^\prime) {}^\frown (s_0, \langle \dot{P}_{\dot{\beta}_0/\alpha_0},\dot{q}_0\rangle)\cdots {}^\frown (s_{n-1}, \langle \dot{P}_{\dot{\beta}_{n-1}/\alpha_{n-1}},\dot{q}_{n-1}\rangle) {}^\frown \langle f,\vec{f},A,F \rangle.$$
There is a natural concatenation $p_0$ with $p_1$, written by $p_0{}^\frown p_1$, which is 
$$r=r_0 {}^\frown (\langle g,\vec{g},B,H\rangle, \langle \dot{P}_{\beta^\prime/\beta},\dot{q}^\prime \rangle) {}^\frown \cdots {}^\frown (s_{n-1}, \langle \dot{P}_{\dot{\beta}_{n-1}/\alpha_{n-1}},\dot{q}_{n-1}\rangle) {}^\frown \langle f,\vec{f},A,F \rangle.$$
Then $r \in P_\alpha$ with $r \restriction P_\beta=p_0$ exists. Denote $r/P_\beta$ the term $p_1$.
For $P_\beta$-names $p_0$ and $p_1$ in $\dot{P}_{\alpha/\beta}$, we say that $p_0 \leq p_1$ if there is $p \in G^{P_\beta}$ such that $p$ interprets $p_0$ and $p_1$, and $p {}^\frown p_0 \leq_\alpha p {}^\frown p_1$. Also define $p_0 \leq^* p_1$ if there is $p \in G^{P_\beta}$ such that $p$ interprets $p_0$ and $p_1$, and $p {}^\frown p_0 \leq^*_\alpha p{}^\frown p_1$. One can check that the map $\phi: \{p \in P_\alpha \mid p \restriction P_\beta$ exists$\} \to P_\beta* \dot{P}_{\alpha/\beta}$ defined by
$\phi(p)=(p \restriction P_\beta, p / P_\beta)$
is a dense embedding, where $p \setminus P_\beta$ is the obvious component of $p$ which is in $\dot{P}_{\alpha/\beta}$.
Note that if $G$ is $P_\beta$-generic and $H$ is $P_\alpha[G]$-generic, there is a generic $I$ for $P_\alpha$ such that $V[G*H]=V[I]$, where $I$ is generated by $\{p \mid p \restriction P_\beta$ exists, $p \restriction P_\beta \in G$ and $(p /P_\beta)[G] \in H\}$. Conversely, if $I$ is $P_\alpha$-generic and for some $ p\in I$, $p \restriction P_\beta$ exists, we can get $G$ which is $P_\beta$-generic and $H$ which is $P_\alpha[G]$-generic such that $V[G*H]=V[I]$, where $G$ is generated by $\{p \restriction P_\beta \mid p \in I$ and $p \restriction P_\beta$ exists$\}$ and $H=\{(p /P_\beta)[G] \mid p \in I$ and $p \restriction P_\beta$ exists$\}$. 

In $V^{P_\beta}$, let $\dot{C}_{\alpha/\beta}$ be a $\dot{P}_{\alpha/\beta}$-name of the set described as the following. Let $G$ be $P_\beta$-generic.
and $H$ be generic over $P_\alpha[G]=\dot{P}_{\alpha/\beta}[G]$.
Then let $I=G*H$ be $P_\alpha$-generic. $I$ derives the set $C_\alpha \subseteq \alpha+1$ and $G$ derives the set $C_\beta \subseteq \beta+1$. Let $C_{\alpha/\beta}=C_\alpha \setminus C_\beta$.

\end{definition}

The following proposition has a similar proof as some previous propositions, for example, Proposition \ref{cc1} and Proposition \ref{quotientprop}.
\begin{proposition}
    \begin{itemize}
        \item $\Vdash_\beta ``(\dot{P}_{\alpha/\beta},\leq)$ is $\alpha^{++}$-c.c."
        \item $\Vdash_\beta ``\dot{P}_{\alpha/\beta},\leq, \leq^*)$ has the Prikry property.
        \item $\Vdash_\beta ``(\dot{P}_{\alpha/\beta},\leq^*)$ is $\beta^*$-closed", where $\beta^*$ is the least inaccessible cardinal greater than $\beta$.
    \end{itemize}
\end{proposition}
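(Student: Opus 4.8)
The strategy is to transfer every assertion, via the dense embedding, from $P_\alpha$ to the quotient. Fix a $P_\beta$-generic $G$ and argue in $V[G]$; by the embedding $\phi\colon\{p\in P_\alpha\mid p\restriction P_\beta\text{ exists}\}\to P_\beta*\dot P_{\alpha/\beta}$ each of the three statements about $\dot P_{\alpha/\beta}[G]$ becomes the corresponding statement about the dense suborder $D_\beta=\{p\in P_\alpha\mid p\restriction P_\beta\text{ exists}\}$ of $P_\alpha$. The recurring point enabling this is that $\alpha$ is inaccessible with $\beta<\alpha$, so $|P_\beta|=\beta^{++}<\alpha=\crit(j_{\alpha,\gamma})$ for every $\gamma<\circ(\alpha)$; hence each $j_{\alpha,\gamma}$ lifts to $V[G]$ with the generic fixed, and the extenders $\vec E_\alpha(d)$ for $d\in V$ keep, in $V[G]$, their normality, coherence, diagonal-intersection and $\mathcal B_d$-smallness properties. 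Moreover every tree, object and common domain occurring in a condition is required to lie in $V$, and for $\nu$ appearing in a condition of $\dot P_{\alpha/\beta}$ one has $\nu>\beta$, so the interleaved forcing $\dot P_{\dot\beta_\nu/\nu}[G]$ is the double quotient of Proposition~\ref{indsch}(\ref{7}) and inherits all of its induction-scheme properties.

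For the chain condition I would rerun the proof of Proposition~\ref{cc1} in $V[G]$: given $\alpha^{++}$ conditions, the block count, the levels $\alpha_i$, the head index $\beta'$, the trees and the common domains are all (interpretations of) $V$-objects, so after shrinking we may fix all of these, put the common domains into a $\Delta$-system with root $d$, and fix $A^\gamma(\alpha)$; the values $f^\gamma_0(\zeta),f^\gamma_\nu(\zeta),F^\gamma(\nu)$ for $\zeta\in d$, $\nu\in A^\gamma(\alpha)$ are names coded in $V_\alpha$, so a further shrinking makes them all agree on the root data, the heads $q'$ being aligned using that $\dot P_{\beta'/\beta}$ is $\alpha^{++}$-c.c.\ by Proposition~\ref{indsch}(\ref{5}); then any two surviving conditions are compatible.

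For $\beta^*$-closure under $\leq^*$, take a $\leq^*$-decreasing $\langle p_\gamma\mid\gamma<\gamma^*\rangle$ with $\gamma^*<\beta^*$. Since $\leq^*$ never alters the block structure, all $p_\gamma$ have the same length and levels; the heads form a $\leq^*$-decreasing sequence in $\dot P_{\beta'/\beta}[G]$, which is $\beta^*$-closed under $\leq^*$ by Proposition~\ref{indsch}(\ref{5}), so they have a $\leq^*$-lower bound. In $V$ let $d^*$ be the union of the interpreted common domains, still an $\alpha$-domain of size $<\alpha$; each Cohen-like name $f^\gamma_0(\zeta),f^\gamma_\nu(\zeta)$ for $\zeta\in d^*$ is forced along $\leq^*$ to stabilize once $\zeta$ enters the relevant domain, so take the corresponding component of the lower bound to equal that stable value. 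Pull back the trees to $d^*$, intersect, and shrink so $\min(A^*(\alpha))>\beta^*>\gamma^*$; for $\nu\in A^*(\alpha)$ one has $\nu>\beta^*>\gamma^*$, and by Proposition~\ref{indsch}(\ref{5}) $\dot P_{\dot\beta_\nu/\nu}$ is forced to be $\nu^*$-closed under $\leq^*$ with $\nu^*>\nu$, so $\langle F^\gamma(\nu)_1\mid\gamma<\gamma^*\rangle$ has a $\leq^*$-lower bound; interior positive-order blocks and the $\vec f_i$'s are treated exactly as in Proposition~\ref{closure1} and Proposition~\ref{quotientprop}. Assembling the pieces yields a $\leq^*$-lower bound of the sequence.

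For the Prikry property I would split on $\circ(\alpha)$ as in Theorem~\ref{Prikry2}. When $\circ(\alpha)=0$ the quotient is a finite iteration of Prikry-type forcings — the head, the Cohen-like interior factors, the interleaved sub-quotients and the top Cohen part — each with the Prikry property by the induction scheme or trivially, so the iteration does. When $\circ(\alpha)\ge1$ I would rerun the proof of Theorem~\ref{Prikry2} (together with Lemmas~\ref{integrate} and~\ref{maximize2}) verbatim in $V[G]$: apply the $V[G]$-analogue of Lemma~\ref{maximize2}, build a $\leq^*$-decreasing sequence along $\alpha$ deciding the auxiliary statements $\varphi^i_{\vec\tau,\mu}$ using the Prikry property of the head $\dot P_{\beta'/\beta}[G]$ and of the interleaved $\dot P_{\dot\beta_\nu/\nu}[G]$ (all from the induction scheme), sort the one-step extensions into $E_{\alpha,\beta}(d^\gamma)$-measure-one pieces according to which $i$ is forced, integrate the non-$i=2$ case through Lemma~\ref{integrate}, and close with the minimal-number-of-blocks argument. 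The main obstacle — the one point that needs argument rather than a citation — is to verify that the extender apparatus of Section~\ref{bas} (coherent $d$-trees, diagonal intersections over $\vec E_\alpha(d)$, and especially the integration step $j_{E_{\alpha,\beta}}(\mu\mapsto\cdots)(\mc_{\alpha,\beta}(d))$ in Lemma~\ref{integrate}) survives the passage to $V[G]$; this is exactly what $|P_\beta|<\alpha=\crit(j_{\alpha,\beta})$ provides, letting $j_{\alpha,\beta}$ lift to $V[G]$ with the generic fixed and leaving every $V$-coded measure-one set and tree with its properties intact. Granting this, no idea beyond those already used for $P_\alpha$ is required.
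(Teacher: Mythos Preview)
Your proposal is correct and follows the same approach as the paper, which simply says the proof is ``similar to some previous propositions, for example Proposition~\ref{cc1} and Proposition~\ref{quotientprop}.'' You have essentially written out in detail what the paper leaves implicit: transfer each argument through the dense embedding $\phi$, using that $|P_\beta|=\beta^{++}<\alpha$ so the extender apparatus on $\alpha$ is unaffected in $V[G]$, and invoke the induction scheme for the interleaved double quotients. Two minor remarks: for the chain condition the head component $(P_{\beta'}[G],q')$ is handled by pigeonhole on its size $(\beta')^{++}<\alpha^{++}$ rather than by a chain-condition argument per se; and for the Prikry property the survival of the measures $\vec E_\alpha(d)$ in $V[G]$ is really just the smallness of $P_\beta$ relative to $\alpha$, so the lifting-of-embeddings language, while correct, is more than is strictly needed.
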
    

We conclude that from all the analysis, Proposition \ref{indsch} holds for $P_\alpha$ and all relevant quotients at $\alpha$.    
\section{The main forcing}
\label{mainforcing}

We are now defining our main forcing $\mathbb{P}$. The forcing $\mathbb{P}=\cup_{\{\alpha<\kappa \mid \alpha \text{ is inaccessible}\}} P_\alpha$. For $p$ and $p^\prime$ in $\mathbb{P}$, define $p \leq p^\prime$ if $p \in P_\alpha$, $p^\prime \in P_{\alpha^\prime}$, $\alpha \geq \alpha^\prime$, $p \restriction P_{\alpha^\prime}$ exists, and $p \restriction P_{\alpha^\prime} \leq_{\alpha^\prime} p$.
The forcing is $\kappa^+$-c.c.
Let $G$ be $\mathbb{P}$-generic. Then if $p \in G$ is such that $p \restriction P_\alpha$ exists, then $G \restriction P_\alpha$ is $P_\alpha$-generic. We briefly describe $\mathbb{P}/P_\alpha$ for $\alpha<\kappa$ inaccessible. Recall that for $\alpha\leq \eta<\kappa$, $\Vdash_\alpha ``\{p/P_\alpha \mid p \in P_\eta, p \restriction P_\alpha \text{ exists}\}$ is densely embedded in $\dot{P}_{\eta/\alpha}"$. 
For $\alpha<\kappa$ inaccessible, let $\mathbb{P}/P_\alpha$ as the collection $\{p/P_\alpha \mid p \in \mathbb{P}, p \restriction P_\alpha \text{ exists}\}$. Define $p_0 \leq p_1$ (in $V^{P_\alpha})$ if there is $p \in P_\alpha$ such that $p {}^\frown p_0 \leq_{\mathbb{P}} p^{}\frown p_1$. 

\begin{remark}
    $V^{P_\alpha}$, for every $p \in \mathbb{P}/P_\alpha$, there is $\eta$ such that $p \in \dot{P}_{\eta/\alpha}$.
\end{remark}

 This introduces the set $C_\alpha$. Let $C=\cup_\alpha \{C_\alpha \mid G \restriction P_\alpha$ is $P_\alpha$-generic$\}$. Then $C \subseteq \kappa$ is a club. The next theorem shows that the cardinal arithmetic should be as expected.

\begin{theorem}
\label{deciding}
Let $\dot{f}$ be a $\mathbb{P}$-name of a function from $\beta$ to ordinals such that $\beta<\kappa$ and $G$ is $\mathbb{P}$-generic. Then $f \in V[G \restriction P_\alpha]$ for some $\alpha<\kappa$.   
\end{theorem}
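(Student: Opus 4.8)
The plan is to fix an inaccessible $\alpha$ with $\beta<\alpha<\kappa$, pass to the intermediate model $V[G\restriction P_\alpha]$, and show that the tail quotient $\mathbb P/P_\alpha$ adds no new function $\beta\to\mathrm{Ord}$ over $V[G\restriction P_\alpha]$; since $\dot f$ corresponds to a $\mathbb P/P_\alpha$-name over that model, this gives $f\in V[G\restriction P_\alpha]$. First a harmless normalization: by the $\kappa^+$-c.c. of $\mathbb P$, for each $\gamma<\beta$ the value $\dot f(\gamma)$ is pinned down by an antichain of size $\le\kappa$, so (working in $V$) there is a set $Y$ with $|Y|^V\le\kappa$ and $\Vdash_{\mathbb P}\rng(\dot f)\subseteq\check Y$; composing with a bijection of $Y$ onto $|Y|$ we may assume $\dot f$ names a function $\beta\to\kappa$, and since $\kappa$ stays inaccessible in $V[G]$ and $\beta<\kappa$, $\rng(f)$ is bounded, so after one more re-coding in $V$ it suffices to treat the case where $\dot f$ names a subset of an ordinal $\delta<\kappa$; we then pick an inaccessible $\alpha$ with $\delta<\alpha<\kappa$, so that $\delta<\alpha<\alpha^*$, where $\alpha^*$ is the least inaccessible above $\alpha$.

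Now work in $V[G\restriction P_\alpha]$, where $\kappa$ is still inaccessible ($P_\alpha$ is small). Let $p_0\Vdash_{\mathbb P/P_\alpha}\dot g\subseteq\check\delta$; by the Remark, $p_0\in\dot P_{\eta_0/\alpha}$ for some inaccessible $\eta_0<\kappa$. I will build a sequence $\langle p_\gamma\mid\gamma\le\delta\rangle$ descending in $\mathbb P/P_\alpha$ together with a non-decreasing sequence of inaccessibles $\langle\eta_\gamma\mid\gamma\le\delta\rangle$ below $\kappa$, with $p_\gamma\in\dot P_{\eta_\gamma/\alpha}$, such that $p_{\gamma+1}$ decides ``$\check\gamma\in\dot g$'' and $p_{\gamma+1}$ is obtained from $p_\gamma$ purely by ``Prikry-type'' moves — direct extensions inside the forcings $\dot P_{\eta/\alpha}$ and the appending of a top part at a level up to $\eta_{\gamma+1}$ — so that the already-committed stem of $p_\gamma$ below $\eta_\gamma$ is never lengthened. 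At a limit $\gamma$ we put $\eta_\gamma=\sup_{\gamma'<\gamma}\eta_{\gamma'}$, which lies below $\kappa$ since $\gamma\le\delta<\kappa=\cf(\kappa)$ in $V[G\restriction P_\alpha]$, and we take $p_\gamma$ to be a $\le^*$-lower bound, inside $\dot P_{\eta_\gamma/\alpha}$, of the (suitably included) $p_{\gamma'}$, $\gamma'<\gamma$; such a bound exists because $(\dot P_{\eta_\gamma/\alpha},\le^*)$ is $\alpha^*$-closed and $\gamma<\alpha^*$. Then $p_\delta\le p_{\gamma+1}$ whenever $\gamma+1\le\delta$, so $p_\delta$ decides ``$\check\gamma\in\dot g$'' for every $\gamma<\delta$, i.e. $p_\delta$ decides $\dot g$ outright. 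Since $p_0$ was arbitrary, the conditions deciding $\dot g$ are dense below any condition, so $g\in V[G\restriction P_\alpha]$, hence $f\in V[G\restriction P_\alpha]$.

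The main obstacle is the successor step, which is where the Prikry-type technology of the paper must be lifted to the quotient $\mathbb P/P_\alpha$: given $p_\gamma\in\dot P_{\eta_\gamma/\alpha}$, one must produce $p_{\gamma+1}$ of the prescribed shape deciding ``$\check\gamma\in\dot g$''. The right approach is the analogue for $\mathbb P/P_\alpha$ of the maximality Lemma~\ref{maximize2} together with the Prikry-property argument of Theorem~\ref{Prikry2}: first $\le^*$-process $p_\gamma$ — direct-extending and, if necessary, appending a top part at a new level $\eta_{\gamma+1}>\eta_\gamma$ — so that any decision of ``$\check\gamma\in\dot g$'' made by a further extension is already made by the corresponding interpolant's top part (this uses that every extension of $p_\gamma$ in $\mathbb P/P_\alpha$ deciding that statement lies in some $\dot P_{\eta/\alpha}$); then, by the Prikry property of $\dot P_{\eta_{\gamma+1}/\alpha}$, direct-extend there to a condition that actually decides the statement. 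The bookkeeping — enumerating the relevant lower conditions, stabilizing by direct extensions of bounded length, and diagonalizing over trees via the extender images, exactly as in the proofs of Theorems~\ref{prikry1} and~\ref{Prikry2} — is parallel to what has already been carried out at each level $P_\alpha$, and the high closure of the direct-extension orders keeps every auxiliary sequence short enough to assemble; the only genuinely new point is to keep track of, and bound below $\kappa$, the levels $\eta_\gamma$ at which these decisions get realized.
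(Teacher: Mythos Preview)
Your successor step has a genuine gap. The Prikry property you invoke is for the forcing $\dot P_{\eta_{\gamma+1}/\alpha}$, and it lets you find a $\le^*$-extension deciding any statement in the forcing language of $\dot P_{\eta_{\gamma+1}/\alpha}$. But ``$\check\gamma\in\dot g$'' is a $\mathbb P/P_\alpha$-statement, not a $\dot P_{\eta_{\gamma+1}/\alpha}$-statement: since $\mathbb P/P_\alpha$ factors as $\dot P_{\eta_{\gamma+1}/\alpha}*(\mathbb P/P_{\eta_{\gamma+1}})$, the best you can do inside $\dot P_{\eta_{\gamma+1}/\alpha}$ is decide the statement ``$1\Vdash_{\mathbb P/P_{\eta_{\gamma+1}}}\check\gamma\in\dot g$'', and a negative answer to that does not decide ``$\check\gamma\in\dot g$'' in $\mathbb P/P_\alpha$. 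You give no mechanism for choosing $\eta_{\gamma+1}$ large enough that decisions in $\dot P_{\eta_{\gamma+1}/\alpha}$ really do settle the $\mathbb P/P_\alpha$-statement; a priori, a condition deciding ``$\check\gamma\in\dot g$'' could live at an arbitrarily high level below $\kappa$. In effect you are assuming a Prikry property for the union $\mathbb P/P_\alpha=\bigcup_{\eta<\kappa}\dot P_{\eta/\alpha}$, and the paper never establishes that (nor does it obviously follow from the Prikry property of each $\dot P_{\eta/\alpha}$).

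The paper's argument is quite different and sidesteps this issue entirely. It does not fix an arbitrary $\alpha>\beta$ and work in the quotient; instead it chooses $\alpha$ via an elementary submodel $M\prec H_\theta$ with $\dot f,\mathbb P\in M$, $V_{M\cap\kappa}\subseteq M$, and --- crucially --- $\circ(M\cap\kappa)\ge\beta$, using that $\{\alpha:\circ(\alpha)\ge\beta\}$ is stationary. With $\alpha=M\cap\kappa$, elementarity guarantees that for each $\gamma<\beta$ and each $P_\nu$-generic with $\nu<\alpha$, a condition in $\mathbb P/P_\nu$ deciding $\dot f(\gamma)$ can be found in $M$, hence at some level $\xi<\alpha$. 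These deciding conditions are then packaged into the interleaving function $F$ of a single pure condition $p^*=\langle f,\vec f,A,F\rangle\in P_\alpha$: for $\nu\in A(\alpha)$ with $\circ(\nu)=\gamma$, one sets $F(\nu)=\langle\dot P_{\dot\xi/\nu},\dot q\rangle$ where $\dot q$ decides $\dot f(\gamma)$. The requirement $\circ(\alpha)\ge\beta$ is exactly what makes the tree $A$ have objects of every order $\gamma<\beta$, so that each coordinate of $\dot f$ gets handled. This produces a condition forcing $\dot f\in V^{P_\alpha}$ in one shot, with no recursion and no appeal to a Prikry property for the full union.
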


\begin{proof}

We show by a density argument. Let $p \in \mathbb{P}$ and $\dot{f}$ be a $\mathbb{P}$-name of functions from $\beta$ to ordinals, where $\beta<\kappa$. For simplicity, assume $p$ is an empty condition.
Let $M \prec H_\theta$ for some sufficiently large regular $\theta$, $\beta \subseteq M$, $\dot{f},p,\mathbb{P} \in M$, $V_{M \cap \kappa} \subseteq M$, and $\circ(M \cap \kappa) \geq \beta$ (this is possible from Assumption \ref{iniass}, item $(4)$). Say $\alpha=M \cap \kappa$. We are going to build $p^* \in P_\alpha$ of the form $p^*=\langle f,\vec{f},A,F \rangle$. Let $f,\vec{f}$, and $A$ be any objects. Fix $\gamma<\beta$ and $\nu \in A(\alpha)$ such that $\circ(\nu)=\gamma$.
Let $Y_\nu$ be a maximal antichain of relevant collections in $P_\nu$.
For each $r \in Y_\nu$, let $G_r$ be $P_\nu$-generic containing $r$. Since $V_\alpha \subseteq M$, $M[G] \cap \kappa=M \cap \kappa$. Find $q \in \mathbb{P}/G$ such that $q$ decides $\dot{f}(\gamma)[G]$. By elementarity, we may find such a $q$ in $M[G]$. Then $q \in P_\xi/G$ for some $\xi<\alpha$.
Back in $M$, let $\dot{\xi}$ and $\dot{q}$ be the names for such $\xi$ and $q$. Define $F(\nu)=\langle \dot{P}_{\dot{\xi}/\nu},\dot{q} \rangle$. For $\nu$ with $\circ(\nu) \geq \beta$, we assign $F(\nu)$ to be any value. This completes the construction of $F$. By our design, we have that $p^*$ decides $\dot{f}$, and hence, $p^* \Vdash_{\mathbb{P}} \dot{f} \in V^{P_\alpha}$.

\end{proof}

\begin{corollary}
    Every cardinal is preserved in $V^{\mathbb{P}}$.
\end{corollary}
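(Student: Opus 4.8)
The plan is to reduce the whole statement to Theorem \ref{deciding} together with two facts already in hand: $\mathbb{P}$ is $\kappa^+$-c.c., and for each inaccessible $\alpha<\kappa$ the approximating forcing $P_\alpha$ has size $\alpha^{++}<\kappa$ and preserves all cardinals (Proposition \ref{presbel2}, together with the induction scheme, Proposition \ref{indsch}). Since $\mathbb{P}$ is $\kappa^+$-c.c., every $V$-cardinal $\geq\kappa^+$ is automatically preserved, so it suffices to show that every $V$-cardinal $\lambda\leq\kappa$ remains a cardinal in $V[G]$.

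First I would dispose of $\kappa$ itself. If $\kappa$ failed to be a cardinal in $V[G]$, there would be $\beta<\kappa$ and a surjection $h\colon\beta\to\kappa$ in $V[G]$. Since $h$ is a function from an ordinal below $\kappa$ into the ordinals, Theorem \ref{deciding} produces an inaccessible $\alpha<\kappa$ with $h\in V[G\restriction P_\alpha]$. But $|P_\alpha|=\alpha^{++}<\kappa$ and $\kappa$ is (strongly) inaccessible in $V$, so $\kappa$ is still a regular cardinal in $V[G\restriction P_\alpha]$, contradicting that $h$ witnesses its collapse there.

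Next I would treat an arbitrary $V$-cardinal $\lambda<\kappa$. If $\lambda$ were collapsed in $V[G]$, fix a surjection $g\colon\beta\to\lambda$ in $V[G]$ with $\beta<\lambda$. Applying Theorem \ref{deciding} to $g$ yields an inaccessible $\alpha_0<\kappa$ with $g\in V[G\restriction P_{\alpha_0}]$. Here I use the monotonicity of the approximations: for inaccessible $\alpha_0<\alpha$, the set $\{p\in P_\alpha\mid p\restriction P_{\alpha_0}\text{ exists}\}$ densely embeds into $P_{\alpha_0}*\dot{P}_{\alpha/\alpha_0}$, so $G\restriction P_{\alpha_0}$ is recoverable from $G\restriction P_\alpha$ and hence $V[G\restriction P_{\alpha_0}]\subseteq V[G\restriction P_\alpha]$. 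Choosing an inaccessible $\alpha$ with $\alpha>\max(\alpha_0,\lambda)$ — possible since there are unboundedly many inaccessible cardinals below $\kappa$ — we obtain $g\in V[G\restriction P_\alpha]$. Since $P_\alpha$ preserves all cardinals by Proposition \ref{presbel2}, $\lambda$ is still a cardinal in $V[G\restriction P_\alpha]$, contradicting the existence of $g$ there.

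I do not expect a genuine obstacle in this argument: all the difficulty has been absorbed into Theorem \ref{deciding} (and, beneath it, into the Prikry property and the induction scheme). The only points deserving a word of care are the recovery of $G\restriction P_{\alpha_0}$ inside $V[G\restriction P_\alpha]$, which is what lets one push $\alpha$ past the cardinal under consideration, and the separate handling of $\kappa$ itself via the size bound $|P_\alpha|<\kappa$ rather than via cardinal preservation of $P_\alpha$.
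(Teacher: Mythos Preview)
Your argument is correct and is exactly the intended one: the paper states this corollary without proof, and the reasoning you give---Theorem~\ref{deciding} to pull any collapsing function into some $V[G\restriction P_\alpha]$, cardinal preservation of each $P_\alpha$, and the $\kappa^+$-c.c.\ of $\mathbb{P}$ for cardinals above $\kappa$---is precisely what the surrounding text (in particular the proof of the next corollary on inaccessibility of $\kappa$) spells out. One minor redundancy: once $g\in V[G\restriction P_{\alpha_0}]$, you do not actually need to push $\alpha_0$ past $\lambda$, since $P_{\alpha_0}$ already preserves all cardinals (below $\alpha_0$ by Proposition~\ref{presbel2}, and above by its size $\alpha_0^{++}$); the monotonicity step is harmless but unnecessary.
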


\begin{corollary}
    For $\beta<\kappa$ the value $2^\beta$ is determined in $V^{\mathbb{P}_\alpha}$ for some $\alpha \in (\beta,\kappa)$. 
\end{corollary}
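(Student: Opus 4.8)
The plan is to read the corollary off Theorem~\ref{deciding} together with the high direct‑extension closure of the tail forcings. Fix a $\mathbb{P}$‑generic $G$ and choose an inaccessible $\alpha_0\in(\beta,\kappa)$ for which $G\restriction P_{\alpha_0}$ is a well‑defined $P_{\alpha_0}$‑generic; such $\alpha_0$ exists because the set of $\nu<\kappa$ with $G\restriction P_\nu$ defined is unbounded in $\kappa$ (indeed Theorem~\ref{deciding}, applied to an arbitrary name below an arbitrary condition, produces conditions in $P_\nu$ with $\nu$ as large as we like, and such $\nu$ automatically exceed $\beta$ since the submodel in its proof has $\circ(\nu)\ge\beta$ and $\circ(\nu)<\nu$). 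The target is to show $\mathcal{P}(\beta)^{V[G]}=\mathcal{P}(\beta)^{V[G\restriction P_{\alpha_0}]}$. Granting this, since the extension from $V[G\restriction P_{\alpha_0}]$ to $V[G]$ preserves cardinals — a consequence of the Corollary above that $\mathbb{P}$ preserves all cardinals, applied through the factorization $\mathbb{P}\approx P_{\alpha_0}*(\mathbb{P}/P_{\alpha_0})$ — the two models compute $2^\beta$ identically, which is exactly the assertion with $\alpha=\alpha_0$.

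To prove the nontrivial inclusion, let $X\subseteq\beta$ lie in $V[G]$ and apply Theorem~\ref{deciding} to a $\mathbb{P}$‑name for the characteristic function of $X$: there is an inaccessible $\alpha<\kappa$ with $X\in V[G\restriction P_\alpha]$. If $\alpha\le\alpha_0$, then by Proposition~\ref{indsch}(\ref{5}) the poset $P_\alpha$ is, modulo a dense embedding, an initial factor of $P_{\alpha_0}$, so $G\restriction P_\alpha=(G\restriction P_{\alpha_0})\restriction P_\alpha\in V[G\restriction P_{\alpha_0}]$ and hence $X\in V[G\restriction P_{\alpha_0}]$. If $\alpha>\alpha_0$, Proposition~\ref{indsch}(\ref{5}) lets us write $V[G\restriction P_\alpha]=V[(G\restriction P_{\alpha_0})*H]$ with $H$ generic for $\dot{P}_{\alpha/\alpha_0}[G\restriction P_{\alpha_0}]$, and the same item tells us that $(\dot{P}_{\alpha/\alpha_0},\le^*)$ is $\alpha_0^*$‑closed, where $\alpha_0^*$ is the least inaccessible above $\alpha_0$, and that $(\dot{P}_{\alpha/\alpha_0},\le,\le^*)$ has the Prikry property. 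Since $\alpha_0>\beta$ we have $\alpha_0^*\ge\beta^+$, so the direct‑extension order is $\beta^+$‑closed; the routine argument — decide ``$\gamma\in\dot{X}$'' for each $\gamma<\beta$ by a successive $\le^*$‑extension and take a $\le^*$‑lower bound of the resulting $\beta$‑sequence — then shows that $\dot{P}_{\alpha/\alpha_0}$ adds no new subset of $\beta$ over $V[G\restriction P_{\alpha_0}]$, so again $X\in V[G\restriction P_{\alpha_0}]$. As the reverse inclusion is immediate, this establishes $\mathcal{P}(\beta)^{V[G]}=\mathcal{P}(\beta)^{V[G\restriction P_{\alpha_0}]}$.

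All the substantive work sits inside Theorem~\ref{deciding}, which we are free to invoke; what remains is the bookkeeping of factoring $\mathbb{P}$ through $P_{\alpha_0}$ and the standard lemma that a Prikry‑type forcing with a sufficiently closed direct extension adds no new bounded subsets of small ordinals. The only mildly delicate point is ensuring that ``$V^{P_{\alpha_0}}$'' in the statement is meaningful, i.e.\ that $G\restriction P_{\alpha_0}$ genuinely exists as a generic; this is handled by the choice of $\alpha_0$ in the first paragraph (one may even take $\alpha_0$ to be an inaccessible produced by Theorem~\ref{deciding} itself, since that theorem delivers a condition of $G$ lying in the corresponding $P_{\alpha_0}$), and one should also note in passing that the restriction operations $G\mapsto G\restriction P_\nu$ for $\nu\le\nu'$ are coherent, so that $(G\restriction P_{\nu'})\restriction P_\nu=G\restriction P_\nu$, which is what makes the case $\alpha\le\alpha_0$ go through.
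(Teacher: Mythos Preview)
Your proof is correct and follows the same idea the paper indicates (in the proof of the next corollary): factor at an inaccessible $\alpha_0>\beta$ and use that the tail is $\beta^+$-closed under $\leq^*$ together with the Prikry property, so no new subsets of $\beta$ are added. The paper itself gives no separate proof of this corollary; your argument is simply a more detailed execution of the one-line hint there, with the small refinement that you route each individual $X\subseteq\beta$ through Theorem~\ref{deciding} and then factor the resulting $P_\alpha$ (rather than $\mathbb{P}$ itself) at $\alpha_0$, which sidesteps having to check the Prikry property for the full quotient $\mathbb{P}/P_{\alpha_0}$.
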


\begin{corollary}
$\kappa$ is inaccessible in $V^\mathbb{P}$.
\end{corollary}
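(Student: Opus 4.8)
The plan is to verify the three defining clauses of strong inaccessibility for $\kappa$ in $V^{\mathbb P}$, using Theorem \ref{deciding}, the two corollaries just above it, the estimate $|P_\alpha|=\alpha^{++}$ for inaccessible $\alpha<\kappa$, and GCH in $V$. First, $\kappa$ remains a limit cardinal: since $V\subseteq V^{\mathbb P}$ have the same ordinals, the corollary that every cardinal is preserved gives that $V$ and $V^{\mathbb P}$ have exactly the same cardinals, so the cardinals below $\kappa$ are unchanged and still cofinal in $\kappa$.

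For regularity, suppose toward a contradiction that in $V[G]$ there is a cofinal $f\colon\lambda\to\kappa$ with $\lambda<\kappa$. By Theorem \ref{deciding}, $f\in V[G\restriction P_\alpha]$ for some inaccessible $\alpha<\kappa$. As $|P_\alpha|=\alpha^{++}<\kappa$ and $\kappa$ is regular in $V$, $P_\alpha$ is $\kappa$-c.c., so $\kappa$ stays regular in $V[G\restriction P_\alpha]$: any $f\colon\lambda\to\kappa$ there is coordinatewise dominated by some $g\in V$ on $\lambda$, whose range has supremum $<\kappa$ by regularity of $\kappa$ in $V$. Hence $f$ is not cofinal in $V[G\restriction P_\alpha]$, and therefore not cofinal in $V[G]$, a contradiction.

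For the strong-limit clause, fix $\beta<\kappa$. By the corollary that $2^\beta$ is determined in $V^{P_\alpha}$ for a suitable $\alpha\in(\beta,\kappa)$, we have $(2^\beta)^{V[G]}=(2^\beta)^{V[G\restriction P_\alpha]}$. Since $P_\alpha$ has size $\alpha^{++}<\kappa$ and is $\alpha^{++}$-c.c., every subset of $\beta$ in $V[G\restriction P_\alpha]$ has a nice name given by a function from $\beta$ into antichains of $P_\alpha$ of size $\le\alpha^{++}$; under GCH in $V$ the number of such names is at most $(2^{\alpha^{++}})^{\beta}$, which is $<\kappa$ because $\kappa$ is a strong limit in $V$ and $\alpha^{++},\beta<\kappa$. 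Thus $(2^\beta)^{V[G]}<\kappa$.

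Combining the three clauses, $\kappa$ is a regular strong-limit limit cardinal in $V^{\mathbb P}$, i.e.\ strongly inaccessible. The substantive content has all been established already; the only point that matters here is that in Theorem \ref{deciding} and in the $2^\beta$-corollary the witnessing level $\alpha$ can be taken below $\kappa$, localizing the object in question. For the power-set clause this rests on the quotients $\dot P_{\eta/\beta^*}$ (with $\beta^*$ the least inaccessible above $\beta$) having a sufficiently closed direct-extension order and the Prikry property, so that no new subset of $\beta$ is added past level $\beta^*$; granting that, the smallness of $P_\alpha$ together with GCH in $V$ finishes the cardinal-arithmetic estimates, and I anticipate no further obstacle.
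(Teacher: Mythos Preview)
Your proof is correct and follows essentially the same approach as the paper: use Theorem~\ref{deciding} to localize any potential cofinalizing function in some $V[G\restriction P_\alpha]$ with $\alpha<\kappa$, where the smallness of $P_\alpha$ preserves regularity of $\kappa$; and use the $2^\beta$-localization corollary together with the closure of the quotient under $\leq^*$ for the strong-limit clause. The only cosmetic difference is that for the strong-limit bound you count nice names directly, whereas the paper can appeal to the induction scheme (Proposition~\ref{indsch}(\ref{3})) to get the sharper $2^\beta\le\beta^{++}$ in $V^{P_\alpha}$; either route suffices.
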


\begin{proof}
    By Theorem \ref{deciding}, if $\kappa$ is collapsed, then the witness function has to be in $V^{P_\alpha}$ for some $\alpha<\kappa$, but $\kappa$ is preserved in $P_\alpha$, a contradiction. The same argument shows that $\kappa$ is regular. Finally, for every $\beta<\kappa$, the value $2^\beta$ must be determined in $V^{P_\alpha}$ for some sufficiently large $\alpha$ because the forcing can be factored so that the quotient forcing after the stage $\beta$ is $\beta^+$-closed under the direct extension,
\end{proof}

\begin{theorem}
    In $V^\mathbb{P}$, $\kappa$ is inaccessible, there is a club $D \subseteq \kappa$ such that for $\alpha\in D$, $2^\alpha=\alpha^{++}$ and for $\alpha \not \in D$, $2^\alpha=\alpha^+$.
\end{theorem}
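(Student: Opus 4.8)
The plan is to take $D:=\lim(C)$, the set of limit points below $\kappa$ of the club $C=\bigcup\{C_\beta\mid G\restriction P_\beta\text{ is }P_\beta\text{-generic}\}$ introduced just above, and to verify that this $D$ works. Since $C$ is a club in the regular uncountable cardinal $\kappa$, the set $D$ of its limit points below $\kappa$ is again a club in $\kappa$; and since every point of $C$ is a $V$-inaccessible cardinal (item (\ref{2}) of Proposition~\ref{indsch}) and all cardinals are preserved (by the corollary already proved), every element of $D$ is a cardinal in $V^{\mathbb P}$. That $\kappa$ itself remains inaccessible in $V^{\mathbb P}$ is the corollary established above, so the remaining work is to compute $2^\alpha$ for the infinite cardinals $\alpha<\kappa$.

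Fix such an $\alpha$. The first and main point I would establish is that $\mathcal P(\alpha)$ is already computed at a bounded stage: for \emph{any} inaccessible $\beta^*>\alpha$ with $G\restriction P_{\beta^*}$ generic, $\mathcal P(\alpha)^{V[G]}=\mathcal P(\alpha)^{V[G\restriction P_{\beta^*}]}$. Indeed, given $X\in\mathcal P(\alpha)^{V[G]}$, Theorem~\ref{deciding} applied to a name for the characteristic function of $X$ places $X$ in $V[G\restriction P_\beta]$ for some inaccessible $\beta$ with $G\restriction P_\beta$ generic, and we may take $\beta\ge\beta^*$; then $V[G\restriction P_\beta]=V[G\restriction P_{\beta^*}][G']$ with $G'$ generic for the quotient $\dot P_{\beta/\beta^*}[G\restriction P_{\beta^*}]$. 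By item (\ref{5}) of Proposition~\ref{indsch} this quotient is $\mu$-closed in $\leq^*$ for $\mu$ the least inaccessible above $\beta^*$ (hence $\alpha^+$-closed in $\leq^*$, since $\alpha<\beta^*<\mu$) and has the Prikry property; the standard Prikry-forcing argument — run a $\leq^*$-decreasing chain of length $\alpha$ below a given condition, the $(i{+}1)$st step deciding ``$i\in X$'' and limit steps taking $\leq^*$-lower bounds — then yields a $\leq^*$-extension forcing $X$ into the ground model of the quotient, so $X\in V[G\restriction P_{\beta^*}]$.

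Granting this, $2^\alpha$ computed in $V^{\mathbb P}$ equals $2^\alpha$ computed in $V[G\restriction P_{\beta^*}]$, which by item (\ref{3}) of Proposition~\ref{indsch} is $\alpha^+$ or $\alpha^{++}$, and is $\alpha^{++}$ exactly when $\alpha\in\lim(C_{\beta^*})$. The final step is to match this with membership in $D$. If $\alpha\notin D=\lim(C)$, then $\alpha\notin\lim(C_{\beta^*})$ for every relevant $\beta^*$, since $C_{\beta^*}\subseteq C$, so $2^\alpha=\alpha^+$. If $\alpha\in\lim(C)$, then $C\cap\alpha$ is cofinal in $\alpha$; since $\cf(\alpha)<\kappa$ and the $C_\beta$ form a $\subseteq$-increasing chain of initial segments of $C$ (item (\ref{6}) of Proposition~\ref{indsch}) indexed cofinally in $\kappa$, a cofinal subset of $C\cap\alpha$ of order type $\cf(\alpha)$ lies in a single $C_\beta$, and enlarging $\beta$ we may take it relevant, inaccessible and $>\alpha$; for this $\beta^*:=\beta$ we obtain $\alpha\in\lim(C_{\beta^*})$ and hence $2^\alpha=\alpha^{++}$. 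Thus $2^\alpha=\alpha^{++}$ iff $\alpha\in D$, which completes the argument.

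The step I expect to be the main obstacle is the boundedness claim of the second paragraph. The inequality $2^\alpha\le\alpha^{++}$ in $V^{\mathbb P}$ cannot be read off from a chain-condition count (the whole forcing $\mathbb P$ has size $\kappa$, which would only give $2^\alpha\le\kappa$), so it genuinely requires Theorem~\ref{deciding} together with the strong $\leq^*$-closure and the Prikry property of the tail quotients $\dot P_{\beta/\beta^*}$. Some care is needed to check that the ``no new bounded subsets'' argument applies uniformly to all of these quotients, and that $\langle C_\beta\rangle$ really is $\subseteq$-increasing with the earlier terms being initial segments of the later ones; both facts are supplied by Proposition~\ref{indsch}.
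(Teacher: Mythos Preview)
Your proposal is correct and follows exactly the paper's approach: set $D=\lim(C)$ and invoke the already-established corollaries (cardinal preservation, inaccessibility of $\kappa$, and that $2^\alpha$ is determined in some $V[G\restriction P_{\beta}]$). The paper's own proof is a single line pointing to $D=\lim(C)$, leaving the boundedness of $\mathcal P(\alpha)$ implicit in the preceding corollary; you have simply written out that step (via Theorem~\ref{deciding} plus the Prikry property and $\leq^*$-closure of the tail quotients) and the matching $\lim(C_{\beta^*})\cap\alpha=\lim(C)\cap\alpha$ for $\beta^*>\alpha$, which indeed follows from item~(\ref{6}) since $C_{\beta^*}=C\cap(\beta^*+1)$.
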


\begin{proof}
    Let $C$ be the club derived from $\mathbb{P}$ and $D=\lim(C)$. Then $D$ satisfies the theorem.
\end{proof}

\section{Getting different cardinal behaviors on stationary classes}
\label{getdifcar}

Assume GCH. Let $\kappa$ be a strongly inaccessible cardinal. For each $\gamma<\kappa$, let $f_\gamma:\kappa \to \kappa$. Assume that for each $\gamma$, there is a coherent sequence of extenders $\vec{E}_\gamma$, on a  set $X_\gamma \subseteq \kappa$ and $\circ^\gamma:X_\gamma \to \kappa$ such that

\begin{itemize}
\item $\vec{E}_\gamma=\langle E_\gamma(\alpha,\beta) \mid \beta<\circ^\gamma(\alpha) \rangle$.
\item each $E_\gamma(\alpha,\beta)$ is an $(\alpha,\alpha^{+f_\gamma(\alpha)})$ extender witnesses $\alpha$ being $\alpha^{+f_\gamma(\alpha)}$-strong.
\item $\circ^\gamma(\alpha)<\alpha$.
\item for $\nu<\kappa$, $\{\alpha \mid \circ^\gamma(\alpha) \geq \nu\}$ is stationary.
\end{itemize}
 Then we can proceed a similar forcing construction, except that the corresponding Cohen part at $\alpha$ will be $C(\alpha^+,\alpha^{f_\gamma(\alpha)})$. Let $\mathbb{P}^{\langle \vec{f}_\gamma \mid \gamma<\kappa \rangle}$ be the corresponded forcing.

\begin{theorem}
\label{carsta}
    In the forcing $\mathbb{P}^{\langle f_\gamma \mid \gamma<\kappa \rangle}$, all the cardinals are preserved, the forcing produces a club $C \subseteq \cup_{\gamma<\kappa} X_\gamma$ such that for each $0<\xi<\kappa$ regular and $\gamma<\kappa$, the collection of $\alpha$ with $\cf(\alpha) \geq \xi$ and $2^\alpha=\alpha^{+f_\gamma(\alpha)}$ is stationary. 
\end{theorem}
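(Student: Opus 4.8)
The plan is to reduce Theorem \ref{carsta} to the machinery already developed for the single-sequence forcing $\mathbb{P}$, exploiting Assumption \ref{iniass}(4) (the analogue for each $\circ^\gamma$) together with the key decision result Theorem \ref{deciding}. First I would set up the approximating forcings $P_\alpha^{\langle f_\gamma\rangle}$ exactly as in Sections \ref{firfew}--\ref{generallevel}, with the only change being that at a level $\alpha$ lying in $X_\gamma$ the Cohen part is $C(\alpha^+,\alpha^{+f_\gamma(\alpha)})$ instead of $C(\alpha^+,\alpha^{++})$; one checks, line by line, that the induction scheme Proposition \ref{indsch} goes through verbatim with ``$\alpha^{++}$'' replaced by ``$\alpha^{+f_\gamma(\alpha)}$'' in the cardinality, chain condition, and cardinal-arithmetic clauses, because the only structural facts used about $C(\alpha^+,\alpha^{++})$ were its $\alpha^+$-closure, its $\alpha^{++}$-c.c., and that it adds $\alpha^{++}$ subsets of $\alpha^+$ --- and $C(\alpha^+,\alpha^{+f_\gamma(\alpha)})$ has the corresponding properties (here one uses GCH to keep the chain condition at $\alpha^{+f_\gamma(\alpha)}$ and $f_\gamma(\alpha)<\alpha$, which may be arranged by shrinking to the club of $\alpha$ with $f_\gamma(\alpha)<\alpha$). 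Then $\mathbb{P}^{\langle f_\gamma\rangle}=\bigcup_{\alpha} P_\alpha^{\langle f_\gamma\rangle}$ is $\kappa^+$-c.c., and the proof of Theorem \ref{deciding} applies with no change, giving that every bounded subset of $\kappa$ in the extension appears in some $V[G\restriction P_\alpha]$.

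Next I would establish cardinal preservation and the club $C$. Cardinal preservation is immediate from Theorem \ref{deciding} plus the level-by-level preservation in Proposition \ref{indsch}(3): a collapse of a cardinal $<\kappa$ would live in some $V[G\restriction P_\alpha]$, contradicting preservation at level $\alpha$; and $\kappa$ itself is preserved and stays inaccessible by the corollaries following Theorem \ref{deciding}. The generic produces the club $C=\bigcup\{C_\alpha\mid G\restriction P_\alpha\text{ is }P_\alpha\text{-generic}\}\subseteq\bigcup_{\gamma<\kappa}X_\gamma$, since each $C_\alpha\subseteq\alpha+1$ consists of cardinals where the relevant extender sequences act, hence of elements of $\bigcup_\gamma X_\gamma$; that $C$ is a club follows as in Section \ref{mainforcing}, the initial segments $C_\alpha$ cohering by Proposition \ref{indsch}(6).

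The heart of the matter is the stationarity assertion: for fixed regular $0<\xi<\kappa$ and fixed $\gamma<\kappa$, the class $S_{\xi,\gamma}=\{\alpha<\kappa\mid \cf^{V^{\mathbb{P}}}(\alpha)\ge\xi\text{ and }2^\alpha=\alpha^{+f_\gamma(\alpha)}\}$ is stationary. I would prove this by a reflection/elementary-submodel argument in the spirit of the proof of Theorem \ref{deciding}. Given a name $\dot{E}$ for a club in $\kappa$ and a condition $p$, build $M\prec H_\theta$ containing $p,\dot{E},\mathbb{P}^{\langle f_\gamma\rangle}$, with $V_{M\cap\kappa}\subseteq M$ and --- crucially --- $\circ^\gamma(M\cap\kappa)\ge\max(\xi,\,\text{something large enough to fix the cofinality})$, which is where the stationarity hypothesis on $\{\alpha\mid\circ^\gamma(\alpha)\ge\nu\}$ is used together with the fact that $M\cap\kappa$ can be taken to lie in $X_\gamma$ with prescribed Mitchell order. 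Put $\alpha=M\cap\kappa$. By Proposition \ref{indsch}(3) applied at level $\alpha$ in the $\langle f_\gamma\rangle$-forcing, $\alpha$ is singularized by $P_\alpha^{\langle f_\gamma\rangle}$ to cofinality $\cf(\omega^{\circ^\gamma(\alpha)})$, which we arrange to be $\ge\xi$ by choosing $\circ^\gamma(\alpha)$ large, and $2^\alpha=\alpha^{+f_\gamma(\alpha)}$ there (this is the $2^\alpha=\alpha^{++}$ clause with the uniform replacement). Then, exactly as in Theorem \ref{deciding}, one constructs $p^*\le p$ in $P_\alpha^{\langle f_\gamma\rangle}$ deciding that $\alpha\in\dot{E}$ (using elementarity of $M$ to find, for each relevant lower-part generic, a condition deciding membership of $\alpha$ in the club, and folding these into the top function $F$), and $p^*$ forces $\alpha\in S_{\xi,\gamma}\cap\dot{E}$. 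The main obstacle I expect is bookkeeping: verifying that the ``uniform replacement'' of $\alpha^{++}$ by $\alpha^{+f_\gamma(\alpha)}$ genuinely propagates through the recursive construction of the $P_\alpha$'s and through the quotient analysis --- in particular that the closure of the quotients $\dot{P}_{\alpha/\beta}$ under $\le^*$ is unaffected (it only depends on the inaccessibles between $\beta$ and $\alpha$, not on $f_\gamma$), and that the chain-condition $\Delta$-system arguments still close up when the Cohen parts at different levels have different widths. Once that is checked, the stationarity argument is a routine adaptation of the proof of Theorem \ref{deciding}, and the preservation of cofinalities outside $C$ follows as before from the quotient factorization with a highly-closed tail.
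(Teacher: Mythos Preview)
Your proposal is correct and follows essentially the same line as the paper's proof sketch: pick an elementary submodel $M\prec H_\theta$ with $\alpha=M\cap\kappa$ satisfying $\circ^\gamma(\alpha)\ge\xi$, and build a condition $p^*\in P_\alpha$ whose $F$-component decides, at each $\nu\in A(\alpha)$, an element of the club name above $\nu$ (the paper phrases this as deciding $\min(\dot D\setminus(\nu+1))$, which by elementarity lands below $\alpha$), forcing $\alpha$ to be a limit point of the club with the desired cofinality and power-set value. Your treatment is in fact more thorough than the paper's, which gives only a proof sketch and omits the verification that the induction scheme survives the replacement of $\alpha^{++}$ by $\alpha^{+f_\gamma(\alpha)}$ and the cardinal-preservation discussion you spell out.
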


\begin{proof}[Proof Sketch]
Fix $\xi>0$ and a $\mathbb{P}$-name of a club subset of $\kappa$ $\dot{D}$. 
Let $p$ be a condition, $\dot{D}$ a name of a club subset of $\kappa$. Let $M \prec H_\theta$ where $\theta$ is sufficiently large, $\dot{D},p,\mathbb{P}^{\langle f_\beta \mid \beta<\kappa \rangle} \in M$, $V_{M \cap \kappa} \subseteq M$, and $\circ^\gamma(M \cap \kappa)\geq \xi$. Let $\alpha=M \cap \kappa$. We are now extending $p$ to a condition whose top level is $\alpha$. Let $p=\langle f,\vec{f},A,F \rangle \in P_\alpha$, where $f,\vec{f},A$ can be any sensible components. For each $\nu \in A(\alpha)$, let $F(\nu)$ be a condition that decides an element $\dot{\xi}$ which is the minimum of the interpretation of $\dot{D} \setminus (\nu+1)$. By elementarity, $\dot{\xi}$ is decided to be below $\alpha$. Then the final condition forces that $\alpha$ is in $\dot{C} \cap \dot{D}$, and forces that $2^\alpha=\alpha^{f_\gamma(\alpha)}$, and $\cf(\alpha) \geq \xi$.
\end{proof}

\begin{example}
Start from GCH, $\kappa$ carrying a $(\kappa,\kappa^{+\kappa})$-extender. Then it is possible that for $\gamma<\kappa$, there is a sequence coherent sequence of extenders $\vec{E}_\gamma$ on a stationary set $X_\gamma \subseteq \kappa$ where each $E_\gamma(\alpha,\beta)$ witnesses $\alpha$ being $\alpha^{+\gamma}$-strong. Let $f_\gamma$ be a constant function with value $\gamma$. Then the forcing $\mathbb{P}^{\langle f_\gamma \mid \gamma<\kappa \rangle}$ forces that $\kappa$ is inaccessible, and in $V_\kappa$ and each $\gamma<\kappa$, there is a stationary class $S_\gamma \subseteq \kappa$ such that for $\alpha \in S_\gamma$, $2^\alpha=\alpha^{+\gamma}$. Also, in this situation, for each $\gamma$ and $\xi$, the collection of $\alpha$ such that $\circ^\gamma(\alpha)=\xi$ is stationary. A similar proof as in Theorem \ref{carsta} shows that in the final model, for every $\gamma<\kappa$ and $\xi<\kappa$ regular, there is a stationary set of $\alpha$ such that $2^\alpha=\alpha^{+\gamma}$ and $\cf(\alpha)=\xi$.
\end{example}

\bibliographystyle{ieeetr}
\bibliography{references}

\begin{thebibliography}{1}

\bibitem{GitJir22}
M.~Gitik and S.~Jirattikansakul, ``Another method to add a closed unbounded set
  of former regulars,'' {\em submitted}, 2022.
\newblock arXiv:2206.05693.

\bibitem{GITIKmerimovich2006}
M.~Gitik and C.~Merimovich, ``Power function on stationary classes,'' {\em
  Annals of Pure and Applied Logic}, vol.~140, no.~1, pp.~75--103, 2006.
\newblock Cardinal Arithmetic at work: the 8th Midrasha Mathematicae Workshop.

\bibitem{Mer11}
C.~Merimovich, ``Extender-based magidor-radin forcing,'' {\em Israel Journal of
  Mathematics}, vol.~182, pp.~439--480, 2011.

\bibitem{Jir22}
S.~Jirattikansakul, ``Blowing up the power of a singular cardinal of
  uncountable cofinality with collapses,'' {\em Annals of Pure and Applied
  Logic}, vol.~174, no.~6, 2023.

\end{thebibliography}
\end{document}